\newtheorem{thm}{Theorem}[section]
\newtheorem{cor}[thm]{Corollary}
\newtheorem{lem}[thm]{Lemma}
\newtheorem{prop}[thm]{Proposition}
\theoremstyle{definition}
\newtheorem{defn}[thm]{Definition}
\newtheorem{example}[thm]{Example}
\theoremstyle{remark}
\newtheorem{rem}[thm]{Remark}
\numberwithin{equation}{section}
\begin{document}
\title[Metrical almost periodicity and applications]{Metrical almost periodicity and applications}

\author{M. Kosti\' c}
\address{Faculty of Technical Sciences,
University of Novi Sad,
Trg D. Obradovi\' ca 6, 21125 Novi Sad, Serbia}
\email{marco.s@verat.net}

{\renewcommand{\thefootnote}{} \footnote{2010 {\it Mathematics
Subject Classification.} 42A75, 43A60, 47D99.
\\ \text{  }  \ \    {\it Key words and phrases.} $({\mathrm R},{\mathcal B},{\mathcal P},L)$-multi-almost periodic type functions, $({\mathrm R}_{X},{\mathcal B},{\mathcal P},L)$-multi-almost periodic type functions, Bohr $({\mathcal B},I',\rho,{\mathcal P})$-almost periodic type functions, composition principles,
abstract Volterra integro-differential equations.
\\  \text{  }  
The author is partially supported by grant 451-03-68/2020/14/200156 of Ministry
of Science and Technological Development, Republic of Serbia.}}

\begin{abstract}
In this paper, we analyze 
various classes of
multi-dimensional 
almost periodic type functions in general metric. The main classes of functions under our consideration are
$({\mathrm R},{\mathcal B},{\mathcal P},L)$-multi-almost periodic functions, $({\mathrm R}_{X},{\mathcal B},{\mathcal P},L)$-multi-almost periodic functions, Bohr $({\mathcal B},I',\rho,{\mathcal P})$-almost periodic functions and $({\mathcal B},I',\rho,{\mathcal P})$-uniformly recurrent functions.
We clarify the main structural properties for the introduced classes of almost periodic type functions and
provide some applications of our results to
the abstract Volterra integro-differential equations.
\end{abstract}
\maketitle

\section{Introduction and preliminaries}

The notion of almost periodicity was introduced by the Danish mathematician H. Bohr around 1924-1926 and later generalized by many other authors (for further information regarding almost periodic functions, we refer the reader to the research monographs \cite{besik}, \cite{diagana}, \cite{fink}, \cite{gaston}, \cite{nova-mono}, \cite{nova-selected}, \cite{188}, \cite{pankov} and \cite{30}). As clearly marked in \cite{marko-manuel-ap},
the theory of almost periodic functions depending on several real variables has not received so much attention of the authors compared with the theory of almost periodic functions depending on one real variable. The recent results about multi-dimensional almost periodic functions and 
multi-dimensional almost automorphic functions can be found in
\cite{nova-selected};
for more details about almost periodic functions on (semi-)topological groups, we refer the reader to \cite{marko-manuel-ap}, \cite{nova-selected} and the lists of references quoted therein. 

The classical definition of an almost periodic function goes as follows.
Suppose that $(X,\| \cdot \|)$ is a complex Banach space, and $F : {\mathbb R}^{n} \rightarrow X$ is a continuous function, where $n\in {\mathbb N}$. Then it is said that $F(\cdot)$ is almost periodic if and only if for each $\epsilon>0$
there exists $l>0$ such that for each ${\bf t}_{0} \in {\mathbb R}^{n}$ there exists ${\bf \tau} \in B({\bf t}_{0},l)\equiv \{ {\bf t} \in {\mathbb R}^{n} : |{\bf t}-{\bf t}_{0}|\leq l\}$ such that
\begin{align*}
\bigl\|F({\bf t}+{\bf \tau})-F({\bf t})\bigr\| \leq \epsilon,\quad {\bf t}\in {\mathbb R}^{n};
\end{align*}
here, $|\cdot -\cdot|$ denotes the Euclidean distance in ${\mathbb R}^{n}.$
This is equivalent to saying that for any sequence $({\bf b}_k)$ in ${\mathbb R}^{n}$ there exists a subsequence $({\bf a}_{k})$ of $({\bf b}_k)$
such that the sequence of translations $(F(\cdot+{\bf a}_{k}))$ converges in $C_{b}({\mathbb R}^{n}: X),$ the Banach space of all bounded continuous functions on ${\mathbb R}^{n},$ equipped with the sup-norm. Any trigonometric polynomial in ${\mathbb R}^{n}$ is almost periodic, and we know that a continuous function $F(\cdot)$ is almost periodic if and only if there exists a sequence of trigonometric polynomials in ${\mathbb R}^{n}$ which converges uniformly to $F(\cdot).$
Further on,
any almost periodic function $F(\cdot)$ is bounded, uniformly continuous, the mean value
$$
M(F):=\lim_{T\rightarrow +\infty}\frac{1}{(2T)^{n}}\int_{{\bf s}+K_{T}}F({\bf t})\, d{\bf t}
$$
exists and it does not depend on $s\in {\mathbb R}^{n};$ here and hereafter,  
$K_{T}:=\{ {\bf t}=(t_{1},t_{2},\cdot \cdot \cdot,t_{n}) \in {\mathbb R}^{n} :  |t_{i}|\leq T\mbox{ for }1\leq i\leq n\}.$ The Bohr-Fourier coefficient $F_{\lambda}\in X$ is defined by \index{Bohr-Fourier coefficient}
$$
F_{\lambda}:=M\Bigl(e^{-i\langle \lambda, {\bf \cdot}\rangle }F(\cdot)\Bigr),\quad \lambda \in {\mathbb R}^{n},
$$
and the Bohr spectrum of $F(\cdot),$ defined by\index{Bohr spectrum}
$$
\sigma(F):=\bigl\{ \lambda \in {\mathbb R}^{n} : F_{\lambda}\neq 0 \bigr\},
$$
is at most a countable set. 

Suppose now that $F : {\mathbb R}^n \rightarrow X$ is continuous. Then it is said that \index{function!almost automorphic} $F(\cdot)$ is almost automorphic if and only if for every sequence $({\bf b}_{k})$ in $\mathbb{R}^n$ there exist a subsequence $({\bf a}_{k})$ of $({\bf b}_{k})$ and a map $G : {\mathbb R}^n \rightarrow X$ such that
\begin{align} \label{first-equ1}
\lim_{k\rightarrow \infty}F\bigl( {\bf t}+{\bf a}_{k}\bigr)=G({\bf t})\ \mbox{ and } \  \lim_{k\rightarrow \infty}G\bigl( {\bf t}-a_{k}\bigr)=F({\bf t}).
\end{align}
If the above limits converge uniformly on compact subsets of ${\mathbb R}^{n}$, then we say that
$F(\cdot)$ is compactly almost automorphic. It is well known that an almost automorphic function is compactly almost automorphic if and only if it is uniformly continuous as well as that any almost periodic function is compactly almost automorphic. Furthermore, any automorphic function is bounded and it is well known that
a bounded continuous function $F(\cdot)$ is almost automorphic if and only if $F(\cdot)$ is Levitan $N$-almost periodic in the following sense: For each $\epsilon>0$ and $N>0,$
there exists $l>0$ such that for each ${\bf t}_{0} \in {\mathbb R}^{n}$ there exists ${\bf \tau} \in B({\bf t}_{0},l)\equiv \{ {\bf t} \in {\mathbb R}^{n} : |{\bf t}-{\bf t}_{0}|\leq l\}$ such that
\begin{align*}
\bigl\|F({\bf t}+{\bf \tau})-F({\bf t})\bigr\| \leq \epsilon,\mbox{ if }{\bf t}\in {\mathbb R}^{n} \mbox{ and }|{\bf t}|\leq N;
\end{align*}
we also say that $\tau$ is a Levitan $(\epsilon,N)$-period of function $F(\cdot).$

In the one-dimensional setting, the classes of bounded almost periodic functions and semi-periodic functions with the Hausdorff metric have been introduced by S. Sto\' inski \cite{stoinski1}-\cite{stoinski2} and later reconsidered by many other authors including
A. S. Dzafarov, G. M. Gasanov \cite{dz} and
A. P. Petukhov \cite{petko}. In  \cite{stoinski}, S. S. Sto\' inski has introduced and analyzed the class of unbounded 
almost periodic functions with the Hausdorff metric (cf. also \cite{stoja2}); real-valued functions almost periodic in variation and $L_{\alpha}$-almost periodic functions (for $\alpha \in (0,1),$ we obtain the class of H\"older almost periodic functions of order $\alpha$, while for $\alpha=1$ we obtain the class of Lipschitz almost periodic functions) have been analyzed by the same author in \cite{stoja} and \cite{stoja1}, respectively. Let us recall that any almost periodic in variation function (any H\"older almost periodic function of order $\alpha\in (0,1);$ any Lipschitz almost periodic function) $f: {\mathbb R} \rightarrow {\mathbb R}$ is almost periodic, while the converse statement is not true:

\begin{example}\label{stojko} (\cite{fasc,stoja1})
\begin{itemize}
\item[(i)] The sum of functions $f_{1}(\cdot)$ and $f_{2}(\cdot)$, where
$f_{1}(t):=\sin(\sqrt{2}\pi t),$ $t\in {\mathbb R},$ $f_{2}(t):=(t-k)\sin(\pi/(t-k)),$ $t\in (k,k+1)$ and $f_{2}(t):=0$, $t=k$ ($k\in {\mathbb Z}$),
is almost periodic but not almost periodic in variation. 
\item[(ii)] The sum of functions 
$f_{1}(\cdot)$ and $f_{2}(\cdot)$, where
$f_{1}(t):=\arcsin (t-4k),$ $t\in [4k-1,4k+1),$ $f_{1}(t):=\arcsin(-t+4k+2),$ $t\in [4k+1,4k+3),$ 
$f_{2}(t):=\arcsin (\sqrt{2}t-4k),$ $t\in [(4k-1)/\sqrt{2},(4k+1)/\sqrt{2}),$ $f_{2}(t):=\arcsin(-\sqrt{2}t+4k+2),$ $t\in [(4k+1)/\sqrt{2},(4k+3)/\sqrt{2})$ ($k\in {\mathbb Z}$),
is almost periodic in variation but not Lipschitz almost periodic.
\end{itemize}
\end{example}

The above-mentioned research articles can be viewed as a certain predecessor of this work.
Here, we investigate multi-dimensional 
almost periodic type functions in general metric; any notion of almost periodicity considered in the former paragraph as well as the notion of (IC)-almost periodicity, introduced and analyzed by M. Adamczak in \cite{marek}, is a very special case of the notion Bohr $({\mathcal B},I',\rho,{\mathcal P})$-almost periodicity introduced in Definition \ref{nafaks123456789012345} below (see also the notion of $C^{(\infty)}_{a}$-almost periodicity introduced and analyzed in \cite{stoinski-prim}). In contrast with the above-mentioned research studies, we primarily deal here with the vector-valued functions and the metric induced by the norm of a weighted $L^{p}$-space of functions or the norm of a weighted $C_{0}$-space of functions. Following this way of looking at things, we generalize several known classes of multi-dimensional (Stepanov) almost periodic type functions and multi-dimensional (Stepanov) almost automorphic type functions (\cite{nova-selected}).
We will always assume henceforth that $\emptyset \neq I \subseteq {\mathbb R}^{n},$ $(Y, \|\cdot\|_Y)$ is a complex Banach space,
$P \subseteq Y^{I},$ the space of all functions from $I$ into $Y,$  the zero function belongs to $P$, and 
${\mathcal P}=(P,d)$ is a metric space; if $f\in P,$ then we designate $\| f\|_{P}:=d(f,0).$ We can also slightly generalize our notion by requiring that ${\mathcal P}=(P,d)$ is a pseudometric space; we will not follow this approach henceforth for simplicity.

The organization and main ideas of this paper, which reconsiders and continues our former research studies A. Ch\' avez et al \cite{marko-manuel-ap}-\cite{marko-manuel-aa} and M. Fe\v ckan et al \cite{rho}, can be briefly described as follows. In Subsection \ref{karambita}, we recall the basic definitions and results from the theory of Lebesgue spaces with variable exponents.
After introducing the basic notation and terminology used throughout the paper, we remind the readers of the necessary definitions of $({\mathrm R}_{X},{\mathcal B})$-multi-almost periodic type functions, Bohr ${\mathcal B}$-almost periodic type functions and $({\mathrm R}_{X},{\mathcal B})$-multi-almost automorphic type functions (see Subsection \ref{maremare123}).

Section \ref{maremare} investigates $({\mathrm R}_{X},{\mathcal B},{\mathcal P},L)$-multi-almost periodic type functions. In Definition \ref{eovakoapp-new} and Definition \ref{eovakoapp1-new}, we introduce the classes of (strongly) $({\mathrm R},{\mathcal B},{\mathcal P},L)$-multi-almost periodic functions and (strongly) $({\mathrm R}_{X},{\mathcal B},{\mathcal P},L)$-multi-almost periodic functions.
A concrete motivation for the introduction of such function spaces comes from many reasons, and we will only present here the following important example from \cite{nova-selected}:

\begin{example}\label{herceg}
As is well known, the Euler equations in ${\mathbb R}^{n}$, where $n\geq 2,$ describe the motion of perfect incompressible
fluids. The essence of problem is to find 
the unknown functions $u = u(x, t) = (u^{1}(x, t), . . . , u^{n}(x, t))$ and $p = p(x, t)$ denoting the
velocity field and the pressure of the fluid, respectively, 
such that
\begin{align}\label{Euler}
\begin{split}
& \frac{\partial u}{\partial t}+(u\cdot \nabla)u +\nabla p=0
\quad \mbox{ in }{\mathbb R}^{n} \times (0,T), \\
& \mbox{div}\, u=0 \mbox{ in } {\mathbb R}^{n} \times (0,T),\\
& u(x,0)=u_{0}(x)\mbox{ in } {\mathbb R}^{n},
\end{split}
\end{align}
where $u_{0} = u_{0}(x) = (u^{1}_{0}
(x), . . . , u^{n}_{0}
(x))$
denotes the given initial velocity field. There are many results concerning the well-posedness of \eqref{Euler} in the case that the initial velocity field $u_{0}(x)$ belongs to some direct product of (fractional) Sobolev spaces. For our observation, it is crucial to remind the readers of the research article \cite{pak-park} by  H. C. Pak and Y. J. Park, who investigated the well-posedness of \eqref{Euler} in the case that the initial velocity field $u_{0}(x)$ belongs to the space $B_{\infty,1}^{1}({\mathbb R}^{n})^{n},$ where $B_{\infty,1}^{1}({\mathbb R}^{n})$ denotes the usual Besov space (see e.g., \cite[Definition 2.1]{sawada}); let the space $B_{\infty,1}^{0}({\mathbb R}^{n})^{n}$ be defined in the same way. It is well known that O. Sawada and R. Takada have proved, in \cite[Theorem 1.5]{sawada}, that the almost periodicity of function $u_{0}(x)$ in
${\mathbb R}^{n}$ implies that the solution $u(\cdot,t)$ of (\ref{Euler}) is
almost periodic in ${\mathbb R}^{n}$ for all $t\in [0,T].$ In \cite[Example 8.1.4]{nova-selected}, we have analyzed the situation in which
${\mathrm R}$ is an arbitrary collection of sequences in ${\mathbb R}^{n}$, and $u_{0}(\cdot)$ has the property that for each sequence $({\bf b}_{k})
$ in ${\mathrm R}$ there exists a subsequence $({\bf b}_{k_{l}})$ of $({\bf b}_{k})
$ such that the sequence of translations $(u_{0}(\cdot+{\bf b}_{k_{l}}))$ is convergent in the space $B_{\infty,1}^{0}({\mathbb R}^{n})^{n}$. Then 
for each sequence $({\bf b}_{k})
$ in ${\mathrm R}$ there exists a subsequence $({\bf b}_{k_{l}})$ of $({\bf b}_{k})
$ such that, for every $t\in [0,T],$ the sequence of translations $(u(\cdot+{\bf b}_{k_{l}},t))$ is convergent in the space $B_{\infty,1}^{0}({\mathbb R}^{n})^{n}.$ Albeit we will not analyze the properties of solutions in more detail here, we would like to note that the condition imposed on the initial value $u_{0}(x)$ is equivalent with the condition that the function $u_{0}(x)$ is $({\mathrm R},{\mathcal P})$-multi-almost periodic, where $P:=B_{\infty,1}^{0}({\mathbb R}^{n})^{n}$ and the metric $d$ is induced by the norm in $P.$
\end{example}

The convolution invariance of $({\mathrm R}_{X},{\mathcal B},{\mathcal P},L)$-multi-almost periodicity and the invariance of $({\mathrm R},{\mathcal B},{\mathcal P},L)$-multi-almost periodicity under the actions of infinite convolution products are investigated in Proposition \ref{convdiaggas}
and Theorem \ref{nova}, respectively.  The convergence of sequences of $({\mathrm R}_{X},{\mathcal B},{\mathcal P},L)$-multi-almost periodic functions in the metric space ${\mathcal P}$ 
is investigated in Proposition \ref{mackat}, while a composition principle in this direction is deduced in Theorem \ref{eovakoonakoap}. In Subsection \ref{autom}, we aim to generalize the spaces of multi-dimensional (Stepanov) almost automorphic functions using our approach of metrical almost periodicity; we also present here a completely new characterization of compactly almost automorphic functions.

Section \ref{maremare1} investigates Bohr $({\mathcal B},I',\rho, {\mathcal P})$-multi-almost periodic type functions. In Definition \ref{nafaks123456789012345}, we introduce the classes of 
Bohr $({\mathcal B},I',\rho,{\mathcal P})$-almost periodic functions and $({\mathcal B},I',\rho,{\mathcal P})$-uniformly recurrent functions.
Before proceeding any further, we would like to recall the following example from the introductory part of article \cite{marko-manuel-ap}, which presents a strong motivational factor for the introduction of such classes of functions:

\begin{example}\label{bosna}
In the homogenization theory, the crucial problem is the asymptotic behaviour of the solutions of the problem
\begin{align}\label{arhangel-sk}
\inf\Biggl\{ \int_{\Omega}f(hx,Du)+\int_{\Omega}\psi x : u(\cdot)\mbox{ Lipschitz continuous and }u=0\mbox{ on }\partial \Omega \Biggr\},
\end{align}
where $\emptyset \neq \Omega \subseteq {\mathbb R}^{n}$ is an open bounded set, $\psi(\cdot)$ is essentially bounded on $\Omega,$ and $f : {\mathbb R}^{n} \times {\mathbb R}^{n} \rightarrow [0,\infty)$ satisfies the usual Carath\' eodory conditions.
Under certain conditions, G. de Giorgi has proved, in \cite{de-giorgi}, that the values in \eqref{arhangel-sk} converges to
\begin{align}\label{ubicemote}
\inf\Biggl\{ \int_{\Omega}f_{\infty}(Du)+\int_{\Omega}\psi x : u(\cdot)\mbox{ Lipschitz continuous and }u=0\mbox{ on }\partial \Omega \Biggr\},
\end{align}
where $f_{\infty} : {\mathbb R}^{n} \rightarrow [0,\infty)$ is a convex function defined by
\begin{align*}
f_{\infty}(x)& :=\lim_{s\rightarrow \infty}s^{-n}\inf\Biggl\{ \int_{(0,s)^{n}}f(x,z+Du) : \\& u(\cdot)\mbox{ is Lipschitz continuous and }u=0\mbox{ on } \partial\Bigl((0,s)^{n}\Bigr)\Biggr\}.
\end{align*} 
In his research study
\cite{dearhangel0}, R. De Arcangelis has
assumed that $f(\cdot,z)\in L_{loc}^{1}({\mathbb R}^{n})$ for every $z\in {\mathbb R}^{n},$
$|z| \leq f(x,z)$ for a.e. $x\in {\mathbb R}^{n}$ and every $z\in {\mathbb R}^{n},$
and, for every $z\in {\mathbb R}^{n},$ the following holds: For every $\epsilon>0,$ there exists a finite real number $L_{\epsilon}>0$ such that, for every $x_{0}\in {\mathbb R}^{n},$
there exists $\tau \in x_{0}+B(0,L_{\epsilon})$ such that
$$
|f(x+\tau,z)-f(x,z)| \leq \epsilon(1+f(x,z)),\quad \mbox{ for a.e. }x\in  {\mathbb R}^{n}\mbox{ and every }z\in {\mathbb R}^{n}.
$$
Then we know that, for every open convex set $\Omega$ and for every essentially bounded function $\psi(\cdot)$ on $\Omega,$ the values in \eqref{arhangel-sk}
converge to the value in \eqref{ubicemote}.
The above-mentioned almost type periodicity of function $f(x,z)$ is closely connected (but not completely equivalent) with the notion of Bohr $(I',\rho,{\mathcal P})$-almost periodicity of function $f(x,z)$, where $I':=\{(x_{1},x_{2},...,x_{n},0,0,...,0) \in {\mathbb R}^{2n} : x_{i}\in {\mathbb R} \ \ (1\leq i\leq n)\},$ $\rho$ is the identity operator on ${\mathbb C},$ $P:=L^{\infty}_{\nu}({\mathbb R}^{2n} : {\mathbb C})$ with $\nu(x,z):=[1+f(x,z)]^{-1}$, $x,\ z\in {\mathbb R}^{n}$ and $d(f,g):=\|f-g\|_{P}$ for all $f,\ g\in P;$ see the next subsection for the notion of space $L^{\infty}_{\nu}({\mathbb R}^{2n} : {\mathbb C})$.
\end{example}

We present some structural results about Bohr $({\mathcal B},I',\rho,{\mathcal P})$-almost periodic functions and $({\mathcal B},I',\rho,{\mathcal P})$-uniformly recurrent functions in Theorem \ref{sade} (here we presenty a completely new characterization of almost automorphic functions), Proposition \ref{prcko}, Proposition \ref{als} and Proposition \ref{aerosmith}; in addition to the above, many illustrative examples are given here. In Subsection \ref{periodic123}, we 
explain how our approach can be employed to provide certain generalizations of multi-dimensional (Stepanov) $\rho$-almost periodic functions. Some applications of our results to the abstract Volterra integro-differential equations are given in Section \ref{some1234554321}; the final section of paper is reserved for some conclusions and final remarks about the considered function spaces.

\subsection{Lebesgue spaces with variable exponents
$L^{p(x)}$}\label{karambita}

Let $\emptyset \neq \Omega \subseteq {\mathbb R}^{n}$ be a nonempty Lebesgue measurable subset, and let 
$M(\Omega  : X)$ denote the collection of all measurable functions $f: \Omega \rightarrow X;$ $M(\Omega):=M(\Omega : {\mathbb R}).$ Furthermore, ${\mathcal P}(\Omega)$ denotes the vector space of all Lebesgue measurable functions $p : \Omega \rightarrow [1,\infty].$
For any $p\in {\mathcal P}(\Omega)$ and $f\in M(\Omega : X),$ we define
$$
\varphi_{p(x)}(t):=\left\{
\begin{array}{l}
t^{p(x)},\quad t\geq 0,\ \ 1\leq p(x)<\infty,\\ \\
0,\quad 0\leq t\leq 1,\ \ p(x)=\infty,\\ \\
\infty,\quad t>1,\ \ p(x)=\infty 
\end{array}
\right.
$$
and
$$
\rho(f):=\int_{\Omega}\varphi_{p(x)}(\|f(x)\|)\, dx .
$$
We define the Lebesgue space 
$L^{p(x)}(\Omega : X)$ with variable exponent
by
$$
L^{p(x)}(\Omega : X):=\Bigl\{f\in M(\Omega : X): \lim_{\lambda \rightarrow 0+}\rho(\lambda f)=0\Bigr\}.
$$
Equivalently,
\begin{align*}
L^{p(x)}(\Omega : X)=\Bigl\{f\in M(\Omega : X):  \mbox{ there exists }\lambda>0\mbox{ such that }\rho(\lambda f)<\infty\Bigr\};
\end{align*}
see, e.g., \cite[p. 73]{variable}.
For every $u\in L^{p(x)}(\Omega : X),$ we introduce the Luxemburg norm of $u(\cdot)$ by
$$
\|u\|_{p(x)}:=\|u\|_{L^{p(x)}(\Omega :X)}:=\inf\Bigl\{ \lambda>0 : \rho(u/\lambda)    \leq 1\Bigr\}.
$$ 
Equipped with the above norm, the space $
L^{p(x)}(\Omega : X)$ becomes a Banach space (see e.g. \cite[Theorem 3.2.7]{variable} for the scalar-valued case), coinciding with the usual Lebesgue space $L^{p}(\Omega : X)$ in the case that $p(x)=p\geq 1$ is a constant function.
Further on, for any $p\in M(\Omega),$ we define
$$
p^{-}:=\text{essinf}_{x\in \Omega}p(x) \ \ \mbox{ and } \ \ p^{+}:=\text{esssup}_{x\in \Omega}p(x).
$$
Set
$$
D_{+}(\Omega ):=\bigl\{ p\in M(\Omega): 1 \leq p^{-}\leq p(x) \leq p^{+} <\infty \mbox{ for a.e. }x\in \Omega \bigr \}.
$$
For $p\in D_{+}([0,1]),$ the space $
L^{p(x)}(\Omega : X)$ behaves nicely, with almost all fundamental properties of the Lebesgue space with constant exponent $
L^{p}(\Omega : X)$ being retained; in this case, we know that 
$$
L^{p(x)}(\Omega : X)=\Bigl\{f\in M(\Omega : X)  \, ; \,  \mbox{ for all }\lambda>0\mbox{ we have }\rho(\lambda f)<\infty\Bigr\}.
$$
Set 
$$
E^{p(x)}(\Omega :X):=\Bigl\{ f\in L^{p(x)}(\Omega :X) : \mbox{ for all }\lambda>0\mbox{ we have }\rho(\lambda f)<\infty\Bigr\};
$$
$E^{p(x)}(\Omega )\equiv E^{p(x)}(\Omega : {\mathbb C}).$ It is well known that 
$E^{p(x)}(\Omega :X)=
L^{p(x)}(\Omega :X),$ provided that $p\in D_{+}(\Omega )$ (see e.g. \cite{fan-zhao}).

We will use the following lemma (cf. \cite{variable} for the scalar-valued case):

\begin{lem}\label{aux}
\begin{itemize}
\item[(i)] (The H\"older inequality) Let $p,\ q,\ r \in {\mathcal P}(\Omega)$ such that
$$
\frac{1}{q(x)}=\frac{1}{p(x)}+\frac{1}{r(x)},\quad x\in \Omega .
$$
Then, for every $u\in L^{p(x)}(\Omega : X)$ and $v\in L^{r(x)}(\Omega),$ we have $uv\in L^{q(x)}(\Omega : X)$
and
\begin{align*}
\|uv\|_{q(x)}\leq 2 \|u\|_{p(x)}\|v\|_{r(x)}.
\end{align*}
\item[(ii)] Let $\Omega $ be of a finite Lebesgue's measure and let $p,\ q \in {\mathcal P}(\Omega)$ such $q\leq p$ a.e. on $\Omega.$ Then
 $L^{p(x)}(\Omega : X)$ is continuously embedded in $L^{q(x)}(\Omega : X),$ and the constant of embedding is less than or equal to 
$2(1+m(\Omega)).$
\item[(iii)] Let $f\in L^{p(x)}(\Omega : X),$ $g\in M(\Omega : X)$ and $0\leq \|g\| \leq \|f\|$ a.e. on $\Omega .$ Then $g\in L^{p(x)}(\Omega : X)$ and $\|g\|_{p(x)}\leq \|f\|_{p(x)}.$
\item[(iv)] (The dominated convergence theorem) Let $p\in {\mathcal P}(\Omega),$ and let $f_{k},\ f\in M(\Omega  : X)$ for all $k\in {\mathbb N}.$ If $\lim_{k\rightarrow \infty} f_{k}(x)=f(x)$ for a.e. $x\in \Omega $
and there exists a real-valued function $g\in E^{p(x)}(\Omega)$ such that $\|f_{k}(x)\|\leq g(x)$ for a.e. $x\in \Omega,$ then $\lim_{k\rightarrow \infty}\|f_{k}-f\|_{L^{p(x)}(\Omega :X)}=0.$
\end{itemize}
\end{lem}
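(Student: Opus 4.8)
The plan is to prove each of the four items of Lemma \ref{aux} by reducing to the scalar-valued theory of variable exponent Lebesgue spaces, since each statement is essentially the vector-valued analogue of a classical fact available in \cite{variable}. The unifying observation is that for $f \in M(\Omega : X)$ the function $x \mapsto \|f(x)\|$ lies in $M(\Omega)$, that the modular $\rho(f)$ depends on $f$ only through $\|f(\cdot)\|$, and hence that $\|f\|_{p(x)} = \big\| \, \|f(\cdot)\| \, \big\|_{p(x)}$ by the very definition of the Luxemburg norm. This identity immediately transfers any monotonicity or embedding estimate from the scalar case to the vector case.

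For item (i), I would first invoke this identity to write $\|uv\|_{q(x)} = \big\| \, \|u(\cdot)\|\,|v(\cdot)| \, \big\|_{q(x)}$, with $\|u(\cdot)\| \in L^{p(x)}(\Omega)$ and $|v(\cdot)| \in L^{r(x)}(\Omega)$, and then apply the scalar H\"older inequality from \cite[Theorem 3.2.11 or similar]{variable}, which gives $\big\|\,\|u(\cdot)\|\,|v(\cdot)|\,\big\|_{q(x)} \le 2 \big\|\,\|u(\cdot)\|\,\big\|_{p(x)} \|v\|_{r(x)}$; rewriting the right-hand side via the norm identity yields exactly the claim, and membership $uv \in L^{q(x)}(\Omega:X)$ follows because $\|u(\cdot)v(\cdot)\| = \|u(\cdot)\|\,|v(\cdot)| \in L^{q(x)}(\Omega)$. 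For item (ii), the natural route is to apply item (i) with the constant function $v \equiv 1$ on $\Omega$, choosing the exponent $r(x)$ defined pointwise by $1/r(x) = 1/q(x) - 1/p(x) \ge 0$; since $\Omega$ has finite measure, $\|1\|_{r(x)} \le 1 + m(\Omega)$ by a direct modular computation (the modular of $\lambda \cdot 1$ is at most $m(\Omega)$ once $\lambda \le 1$), and combining with the factor $2$ from H\"older gives the stated embedding constant $2(1+m(\Omega))$. Item (iii) is the most direct: $0 \le \|g(\cdot)\| \le \|f(\cdot)\|$ a.e. forces $\varphi_{p(x)}(\|g(x)\|/\lambda) \le \varphi_{p(x)}(\|f(x)\|/\lambda)$ pointwise by monotonicity of $\varphi_{p(x)}$ in its argument, hence $\rho(g/\lambda) \le \rho(f/\lambda)$ for every $\lambda > 0$; this gives both $g \in L^{p(x)}(\Omega:X)$ and, taking infima over admissible $\lambda$, the norm inequality $\|g\|_{p(x)} \le \|f\|_{p(x)}$.

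For item (iv), the dominated convergence theorem, I would set $h_k(x) := \|f_k(x) - f(x)\|$, note $h_k \to 0$ a.e. and $h_k \le \|f_k\| + \|f\| \le 2g$ a.e. on $\Omega$ with $2g \in E^{p(x)}(\Omega)$, and then reduce to the scalar statement \cite[Theorem 3.2.something]{variable} applied to the sequence $h_k$ dominated by $2g$; alternatively, one argues directly that for any fixed $\lambda > 0$, $\varphi_{p(x)}(h_k(x)/\lambda) \to 0$ a.e. and is dominated by $\varphi_{p(x)}(2g(x)/\lambda) \in L^1(\Omega)$ (using $2g \in E^{p(x)}$), so the classical Lebesgue dominated convergence theorem gives $\rho(h_k/\lambda) \to 0$; since this holds for every $\lambda > 0$, one concludes $\|f_k - f\|_{p(x)} \to 0$ from the modular characterization of norm convergence. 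The main obstacle I anticipate is purely bookkeeping: one must be careful that in the $p(x) = \infty$ portions of $\Omega$ the function $\varphi_{p(x)}$ is not continuous, so the pointwise domination argument in (iv) needs the observation that on $\{p = \infty\}$ the bound $h_k(x)/\lambda \le 2g(x)/\lambda \le 1$ eventually holds (once one knows $2g \in E^{p(x)}$ forces $2g \le \lambda$ a.e. on that set for suitable $\lambda$, or one splits $\Omega$ accordingly), which keeps $\varphi_{p(x)}(h_k(x)/\lambda) = 0$ there; handling this split cleanly, rather than any deep inequality, is the only delicate point.
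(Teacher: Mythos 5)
Your proof is correct, and in fact the paper offers no proof of Lemma \ref{aux} at all --- it simply states the lemma with the remark ``cf.\ \cite{variable} for the scalar-valued case'' --- so your explicit reduction to the scalar theory via the identity $\|f\|_{p(x)}=\bigl\|\,\|f(\cdot)\|\,\bigr\|_{p(x)}$ (together with the careful treatment of $\varphi_{p(x)}$ on the set $\{p=\infty\}$ in item (iv)) is precisely the intended argument. The only step worth tightening is the bound $\|1\|_{r(x)}\leq 1+m(\Omega)$ in item (ii): from $\varphi_{r(x)}(1/\mu)=(1/\mu)^{r(x)}\leq 1/\mu$ for $\mu\geq 1$ one takes $\mu=1+m(\Omega)$ to get $\rho(1/\mu)\leq m(\Omega)/(1+m(\Omega))\leq 1$, which is routine and does not affect the validity of your argument.
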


For further information concerning the Lebesgue spaces with variable exponents
$L^{p(x)},$ we refer the reader to \cite{variable}, \cite{fan-zhao} and \cite{doktor}.

Suppose now that the set $I$ is Lebesgue measurable as well as that 
$\nu : I \rightarrow (0,\infty)$ is a Lebesgue measurable function. We deal with the following Banach space
$$
L^{p({\bf t})}_{\nu}(I: Y):=\bigl\{ u : I \rightarrow Y \ ; \ u(\cdot) \mbox{ is measurable and } ||u||_{p({\bf t})} <\infty \bigr\},
$$
where $p\in {\mathcal P}(I)$ and 
$$
\bigl\|u\bigr\|_{p({\bf t})}:=\bigl\| u({\bf t})\nu({\bf t}) \bigr\|_{L^{p({\bf t})}(I:Y)}.
$$
Suppose now that $\nu : I \rightarrow (0,\infty)$ is any function such that the function $1/\nu(\cdot)$ is locally bounded. We also deal with the Banach space $C_{0,\nu}(I : Y)$ consisting of all continuous functions $u : I \rightarrow
Y$ satisfying that $\lim_{|{\bf t}|\rightarrow \infty , {\bf t}\in I}
\|u({\bf t})\|_{Y}\nu({\bf t})=0$. Equipped with the norm
$\|\cdot\|:=\sup _{{\bf t}\in I}\|\cdot({\bf t})\nu({\bf t})\|_{Y},$ $C_{0,\nu}(I : Y)$ is a Banach space.
\vspace{1.6pt}

\noindent {\bf Notation and terminology.} Suppose that $X,\ Y,\ Z$ and $ T$ are given non-empty sets. Let us recall that a binary relation between $X$ into $Y$
is any subset
$\rho \subseteq X \times Y.$ 
If $\rho \subseteq X\times Y$ and $\sigma \subseteq Z\times T$ with $Y \cap Z \neq \emptyset,$ then
we define
$\sigma \cdot  \rho =\sigma \circ \rho \subseteq X\times T$ by
$$
\sigma \circ \rho :=\bigl\{(x,t) \in X\times T : \exists y\in Y \cap Z\mbox{ such that }(x,y)\in \rho\mbox{ and }
(y,t)\in \sigma \bigr\}.
$$
As is well known, the domain and range of $\rho$ are defined by $D(\rho):=\{x\in X :
\exists y\in Y\mbox{ such that }(x,y)\in X\times Y \}$ and $R(\rho):=\{y\in Y :
\exists x\in X\mbox{ such that }(x,y)\in X\times Y\},$ respectively; $\rho (x):=\{y\in Y : (x,y)\in \rho\}$ ($x\in X$), $ x\ \rho \ y \Leftrightarrow (x,y)\in \rho .$
If $\rho$ is a binary relation on $X$ and $n\in {\mathbb N},$ then we define $\rho^{n}
$ inductively. Set $\rho (X'):=\{y : y\in \rho(x)\mbox{ for some }x\in X'\}$ ($X'\subseteq X$).

We will always assume henceforth that $(X,\| \cdot \|)$, $(Y, \|\cdot\|_Y)$ and $(Z, \|\cdot\|_Z)$ are three complex Banach spaces, $n\in {\mathbb N},$ $\emptyset  \neq I \subseteq {\mathbb R}^{n},$
${\mathcal B}$ is a non-empty collection of non-empty subsets of $X,$ ${\mathrm R}$ is a non-empty collection of sequences in ${\mathbb R}^{n}$
and ${\mathrm R}_{\mathrm X}$ is a non-empty collection of sequences in ${\mathbb R}^{n} \times X$. We will always assume henceforth 
that
for each $x\in X$ there exists $B\in {\mathcal B}$ such that $x\in B.$ By
$L(X,Y)$ we denote the Banach space of all bounded linear operators from $X$ into
$Y;$ $L(X,X)\equiv L(X)$ and ${\mathrm I}$ denotes the identity operator on $Y.$ If ${\mathcal A}: D({\mathcal A}) \subseteq X \mapsto P(X)$ is a multivalued linear operator (see, e.g., \cite{nova-selected} for the notion), where $P(X)$ stands for the power set of $X,$
then its range and spectrum of ${\mathcal A}$ are denoted by
$R({\mathcal A})$ and $\sigma({\mathcal A}),$ respectively. By $B^{\circ}$ and $\partial B$ we denote the interior and the boundary of a subset $B$ of a topological space $X$, respectively. The Lebesgue measure in ${\mathbb R}^{n}$ is denoted by $m(\cdot),$ and
the Wright function of order $\gamma \in (0,1)$ is denoted by
$\Phi_{\gamma}(\cdot);$ see, e.g., \cite{nova-mono} for the notion. 
Define 
${\mathbb N}_{n}:=\{1,\cdot \cdot \cdot, n\}.$ If ${\mathrm A}$ and ${\mathrm B}$ are non-empty sets, then we define ${\mathrm B}^{{\mathrm A}}:=\{ f | f : {\mathrm A} \rightarrow {\mathrm B}\}.$
Unless stated otherwise, we will always assume henceforth that for each $B\in {\mathcal B}$ 
and ${\bf b} \in {\mathrm R}$ [$({\bf b;{\bf x}}) \in {\mathrm R}_{X}$] a collection $L(B; {\bf b})$ [$L(B; ({\bf b;{\bf x}}) )$] consists of certain subsets of $B,$ as well as that $L(B; {\bf b})$ [$L(B; ({\bf b;{\bf x}}) )$] contains all singletons $\{x\}$ when $x$ runs through $ B.$

\subsection{$({\mathrm R}_{X},{\mathcal B})$-Multi-almost periodic type functions, Bohr ${\mathcal B}$-almost periodic type functions and $({\mathrm R}_{X},{\mathcal B})$-multi-almost automorphic type functions}\label{maremare123}

The main aim of this subsection is to recall the basic definitions and results about $({\mathrm R}_{X},{\mathcal B})$-multi-almost periodic type functions, Bohr ${\mathcal B}$-almost periodic type functions and $({\mathrm R}_{X},{\mathcal B})$-multi-almost automorphic type functions.
In the following, we recall the notion of $({\mathrm R},{\mathcal B})$-multi-almost periodicity and the notion of $({\mathrm R}_{\mathrm X},{\mathcal B})$-multi-almost periodicity (see \cite{nova-selected} for further information in this direction):

\begin{defn}\label{eovakoap}\index{function!$({\mathrm R},{\mathcal B})$-multi-almost periodic}
Suppose that $\emptyset \neq I \subseteq {\mathbb R}^{n},$ $F : I \times X \rightarrow Y$ is a continuous function, and the following condition holds:
\begin{align}\label{lepolepo}
\mbox{If}\ \ {\bf t}\in I,\ {\bf b}\in {\mathrm R}\ \mbox{ and }\ l\in {\mathbb N},\ \mbox{ then we have }\ {\bf t}+{\bf b}(l)\in I.
\end{align}
Then 
we say that the function $F(\cdot;\cdot)$ is $({\mathrm R},{\mathcal B})$-multi-almost periodic if and only if for every $B\in {\mathcal B}$ and for every sequence $({\bf b}_{k}=(b_{k}^{1},b_{k}^{2},\cdot \cdot\cdot ,b_{k}^{n})) \in {\mathrm R}$ there exist a subsequence $({\bf b}_{k_{l}}=(b_{k_{l}}^{1},b_{k_{l}}^{2},\cdot \cdot\cdot , b_{k_{l}}^{n}))$ of $({\bf b}_{k})$ and a function
$F^{\ast} : I \times X \rightarrow Y$ such that
\begin{align*}
\lim_{l\rightarrow +\infty}F\bigl({\bf t} +(b_{k_{l}}^{1},\cdot \cdot\cdot, b_{k_{l}}^{n});x\bigr)=F^{\ast}({\bf t};x) 
\end{align*}
uniformly for all $x\in B$ and ${\bf t}\in I.$\index{space!$AP_{({\mathrm R},{\mathcal B})}(I \times X : Y)$}
\end{defn}

\begin{defn}\label{eovakoap1}\index{function!$({\mathrm R}_{\mathrm X},{\mathcal B})$-multi-almost periodic}
Suppose that $\emptyset  \neq I \subseteq {\mathbb R}^{n},$ $F : I \times X \rightarrow Y$ is a continuous function, and the following condition holds:
\begin{align}\label{lepolepo121}
\mbox{If}\ \ {\bf t}\in I,\ ({\bf b};{\bf x})\in {\mathrm R}_{\mathrm X}\ \mbox{ and }\ l\in {\mathbb N},\ \mbox{ then we have }\ {\bf t}+{\bf b}(l)\in I.
\end{align}
Then 
we say that the function $F(\cdot;\cdot)$ is $({\mathrm R}_{\mathrm X}, {\mathcal B})$-multi-almost periodic if and only if for every $B\in {\mathcal B}$ and for every sequence $(({\bf b;{\bf x}})_{k}=((b_{k}^{1},b_{k}^{2},\cdot \cdot\cdot ,b_{k}^{n});x_{k})_{k}) \in {\mathrm R}_{\mathrm X}$ there exist a subsequence $(({\bf b;{\bf x}})_{k_{l}}=((b_{k_{l}}^{1},b_{k_{l}}^{2},\cdot \cdot\cdot , b_{k_{l}}^{n});x_{k_{l}})_{k_{l}})$ of $(({\bf b};{\bf x})_{k})$ and a function
$F^{\ast} : I \times X \rightarrow Y$ such that
\begin{align*}
\lim_{l\rightarrow +\infty}F\bigl({\bf t} +(b_{k_{l}}^{1},\cdot \cdot\cdot, b_{k_{l}}^{n});x+x_{k_{l}}\bigr)=F^{\ast}({\bf t};x) 
\end{align*}
uniformly for all $x\in B$ and ${\bf t}\in I.$ \index{space!$AP_{({\mathrm R}_{\mathrm X},{\mathcal B})}(I \times X : Y)$} 
\end{defn}

The notion 
introduced in Definition \ref{eovakoap} is a special case of the notion introduced in Definition \ref{eovakoap1}. In order to see this, suppose that the function $F : I \times X \rightarrow Y$ is continuous.
Set
$$
{\mathrm R}_{\mathrm X}:=\bigl\{ b : {\mathbb N} \rightarrow {\mathbb R}^{n} \times X \, ;\, (\exists a\in {\mathrm R}) \, b(l)=(a(l);0)\mbox{ for all }l\in{\mathbb N}\bigr\}.
$$
Then it is clear that 
\eqref{lepolepo} holds for $I$ and ${\mathrm R}$ if and only if \eqref{lepolepo121} holds for $I$ and ${\mathrm R}_{X};$
furthermore,
$F(\cdot;\cdot)$ is $({\mathrm R},{\mathcal B})$-multi-almost periodic if and only if $({\mathrm R}_{\mathrm X},{\mathcal B})$-multi-almost periodic.

The notion of Bohr $({\mathcal B},I',\rho)$-almost periodic function
and the notion of $({\mathcal B},I',\rho)$-unformly recurrent function
have recently been introduced by 
M. Fe\v{c}kan et al. in \cite[Definition 2.1]{rho}:

\begin{defn}\label{nafaks123456789012345}
Suppose that $\emptyset  \neq I' \subseteq {\mathbb R}^{n},$ $\emptyset  \neq I \subseteq {\mathbb R}^{n},$ $F : I \times X \rightarrow Y$ is a continuous function, $\rho$ is a binary relation on $Y$ and $I +I' \subseteq I.$ Then we say that:
\begin{itemize}
\item[(i)]\index{function!Bohr $({\mathcal B},I',\rho)$-almost periodic}
$F(\cdot;\cdot)$ is Bohr $({\mathcal B},I',\rho)$-almost periodic if and only if for every $B\in {\mathcal B}$ and $\epsilon>0$
there exists $l>0$ such that for each ${\bf t}_{0} \in I'$ there exists ${\bf \tau} \in B({\bf t}_{0},l) \cap I'$ such that, for every ${\bf t}\in I$ and $x\in B,$ there exists an element $y_{{\bf t};x}\in \rho (F({\bf t};x))$ such that
\begin{align*}
\bigl\|F({\bf t}+{\bf \tau};x)-y_{{\bf t};x}\bigr\|_{Y} \leq \epsilon .
\end{align*}
\item[(ii)] \index{function!$({\mathcal B},I',\rho)$-uniformly recurrent}
$F(\cdot;\cdot)$ is $({\mathcal B},I',\rho)$-uniformly recurrent if and only if for every $B\in {\mathcal B}$ 
there exists a sequence $({\bf \tau}_{k})$ in $I'$ such that $\lim_{k\rightarrow +\infty} |{\bf \tau}_{k}|=+\infty$ and that, for every ${\bf t}\in I$ and $x\in B,$ there exists an element $y_{{\bf t};x}\in \rho (F({\bf t};x))$ such that
\begin{align*}
\lim_{k\rightarrow +\infty}\sup_{{\bf t}\in I;x\in B} \bigl\|F({\bf t}+{\bf \tau}_{k};x)-y_{{\bf t};x}\bigr\|_{Y} =0.
\end{align*}
\end{itemize}
\end{defn}

Concerning $({\mathrm R}_{X},{\mathcal B})$-multi-almost automorphic type functions, we need to recall the following definitions from \cite{marko-manuel-aa} (see also \cite{nova-selected}):

\begin{defn}\label{eovako}\index{function!(compactly) $({\mathrm R},{\mathcal B})$-multi-almost automorphic}
Suppose that $F : {\mathbb R}^{n} \times X \rightarrow Y$ is a continuous function. Then
we say that the function $F(\cdot;\cdot)$ is $({\mathrm R},{\mathcal B})$-multi-almost automorphic if and only if for every $B\in {\mathcal B}$ and for every sequence $({\bf b}_{k}=(b_{k}^{1},b_{k}^{2},\cdot \cdot\cdot ,b_{k}^{n})) \in {\mathrm R}$ there exist a subsequence $({\bf b}_{k_{l}}=(b_{k_{l}}^{1},b_{k_{l}}^{2},\cdot \cdot\cdot , b_{k_{l}}^{n}))$ of $({\bf b}_{k})$ and a function
$F^{\ast} : {\mathbb R}^{n} \times X \rightarrow Y$ such that
\begin{align}\label{love12345678}
\lim_{l\rightarrow +\infty}F\bigl({\bf t} +(b_{k_{l}}^{1},\cdot \cdot\cdot, b_{k_{l}}^{n});x\bigr)=F^{\ast}({\bf t};x)
\end{align}
and
\begin{align*}
\lim_{l\rightarrow +\infty}F^{\ast}\bigl({\bf t} -(b_{k_{l}}^{1},\cdot \cdot\cdot, b_{k_{l}}^{n});x\bigr)=F({\bf t};x),
\end{align*}
pointwisely for all $x\in B$ and ${\bf t}\in {\mathbb R}^{n}.$ If for each $x\in B$ the above limits converge uniformly on compact subsets of ${\mathbb R}^{n}$, then we say that
$F(\cdot ; \cdot)$ is compactly $({\mathrm R},{\mathcal B})$-multi-almost automorphic. 
\end{defn}

\begin{defn}\label{eovakoap1aa}\index{function!(compactly) $({\mathrm R}_{\mathrm X},{\mathcal B})$-multi-almost automorphic}
Suppose that $F : {\mathbb R}^{n} \times X \rightarrow Y$ is a continuous function.
Then
we say that the function $F(\cdot;\cdot)$ is $({\mathrm R}_{\mathrm X}, {\mathcal B})$-multi-almost automorphic if and only if for every $B\in {\mathcal B}$ and for every sequence $(({\bf b;{\bf x}})_{k}=((b_{k}^{1},b_{k}^{2},\cdot \cdot\cdot ,b_{k}^{n});x_{k})) \in {\mathrm R}_{\mathrm X}$ there exist a subsequence $(({\bf b;{\bf x}})_{k_{l}}=((b_{k_{l}}^{1},b_{k_{l}}^{2},\cdot \cdot\cdot , b_{k_{l}}^{n});x_{k_{l}}))$ of $(({\bf b};{\bf x})_{k})$ and a function
$F^{\ast} : {\mathbb R}^{n} \times X \rightarrow Y$ such that
\begin{align}\label{gospode}
\lim_{l\rightarrow +\infty}F\bigl({\bf t} +(b_{k_{l}}^{1},\cdot \cdot\cdot, b_{k_{l}}^{n});x+x_{k_{l}}\bigr)=F^{\ast}({\bf t};x)
\end{align}
and
\begin{align}\label{gospodine}
\lim_{l\rightarrow +\infty}F^{\ast}\bigl({\bf t} -(b_{k_{l}}^{1},\cdot \cdot\cdot, b_{k_{l}}^{n});x-x_{k_{l}}\bigr)=F({\bf t};x),
\end{align}
pointwisely for all $x\in B$ and ${\bf t}\in {\mathbb R}^{n}.$ 
\end{defn}

For our further work, it will be worth noting that the requirements in Definition \ref{eovako}, resp. Definition \ref{eovakoap1aa}, may cause only the limit equality \eqref{love12345678}, resp. \eqref{gospode}. We will call such functions half-$({\mathrm R},{\mathcal B})$-multi-almost automorphic, resp. half-$({\mathrm R}_{X},{\mathcal B})$-multi-almost automorphic.\index{function!half-$({\mathrm R},{\mathcal B})$-multi-almost automorphic}\index{function!half-$({\mathrm R}_{X},{\mathcal B})$-multi-almost automorphic}

We also need the following definitions from \cite{rho}:

\begin{defn}\label{drasko-presing}
Let ${\bf \omega}\in {\mathbb R}^{n} \setminus \{0\},$ $\rho$ be a binary relation on $Y$ 
and 
${\bf \omega}+I \subseteq I$. A continuous
function $F:I\rightarrow Y$ is said to be $({\bf \omega},\rho)$-periodic if and only if 
$
F({\bf t}+{\bf \omega})\in \rho(F({\bf t})),$ ${\bf t}\in I.
$ 
\end{defn}

\begin{defn}\label{drasko-presing1}
Let ${\bf \omega}_{j}\in {\mathbb R} \setminus \{0\},$ $\rho_{j}\in {\mathbb C} \setminus \{0\}$ is a binary relation on $Y$
and 
${\bf \omega}_{j}e_{j}+I \subseteq I$ ($1\leq j\leq n$). A continuous
function $F:I\rightarrow Y$ is said to be $({\bf \omega}_{j},\rho_{j})_{j\in {\mathbb N}_{n}}$-periodic if and only if 
$
F({\bf t}+{\bf \omega}_{j}e_{j})\in \rho_{j}(F({\bf t})),$ ${\bf t}\in I,
$ $j\in {\mathbb N}_{n}.$ 
\end{defn} \index{function!$({\bf \omega}_{j},\rho_{j})_{j\in {\mathbb N}_{n}}$-periodic}

If $\rho={\mathrm I}$ ($\rho_{j}={\mathrm I}$ for all $j\in {\mathbb N}_{n}$), then we also say that the function $F(\cdot)$ is ${\bf \omega}$-periodic, resp. $({\bf \omega}_{j})_{j\in {\mathbb N}_{n}}$-periodic.

\section{$({\mathrm R}_{X},{\mathcal B},{\mathcal P},L)$-Multi-almost periodic type functions}\label{maremare}

We start this section by introducing the following notion (in contrast with Definition \ref{eovakoap} and Definition \ref{eovakoap1}, we do not require the continuity of function $F(\cdot;\cdot)$ here a priori):

\begin{defn}\label{eovakoapp-new}\index{function!$({\mathrm R},{\mathcal B},{\mathcal P})$-multi-almost periodic}\index{function!strongly $({\mathrm R},{\mathcal B},{\mathcal P})$-multi-almost periodic}
Suppose that $\emptyset \neq I \subseteq {\mathbb R}^{n},$ $F : I \times X \rightarrow Y$ is a given function, and \eqref{lepolepo} holds.
Then 
we say that the function $F(\cdot;\cdot)$ is $({\mathrm R},{\mathcal B},{\mathcal P},L)$-multi-almost periodic, resp. strongly $({\mathrm R},{\mathcal B},{\mathcal P},L)$-multi-almost periodic in the case that $I={\mathbb R}^{n},$  if and only if for every $B\in {\mathcal B}$ and for every sequence $({\bf b}_{k}=(b_{k}^{1},b_{k}^{2},\cdot \cdot\cdot ,b_{k}^{n})) \in {\mathrm R}$ there exist a subsequence $({\bf b}_{k_{l}}=(b_{k_{l}}^{1},b_{k_{l}}^{2},\cdot \cdot\cdot , b_{k_{l}}^{n}))$ of $({\bf b}_{k})$ and a function
$F^{\ast} : I \times X \rightarrow Y$ such that, for every $l\in {\mathbb N}$ and $x\in B,$ we have 
$F(\cdot +(b_{k_{l}}^{1},\cdot \cdot\cdot, b_{k_{l}}^{n});x)-F^{\ast}(\cdot;x)\in P$ and, for every $B'\in L(B; {\bf b}),$
\begin{align}\label{love12345678app-new}
\lim_{l\rightarrow +\infty}\sup_{x\in B'} \Bigl\| F\bigl(\cdot +(b_{k_{l}}^{1},\cdot \cdot\cdot, b_{k_{l}}^{n});x\bigr)-F^{\ast}(\cdot;x)\Bigr\|_{P}=0,
\end{align} 
resp. $F(\cdot +(b_{k_{l}}^{1},\cdot \cdot\cdot, b_{k_{l}}^{n});x)-F^{\ast}(\cdot;x)\in P$, $F^{\ast}(\cdot -(b_{k_{l}}^{1},\cdot \cdot\cdot, b_{k_{l}}^{n});x)-F(\cdot;x)\in P,$ \eqref{love12345678app-new} 
and, for every $B'\in L(B; {\bf b}),$
\begin{align*}
\lim_{l\rightarrow +\infty}\sup_{x\in B'}\Bigl\| F^{\ast}\bigl(\cdot -(b_{k_{l}}^{1},\cdot \cdot\cdot, b_{k_{l}}^{n});x\bigr)-F(\cdot;x)\Bigr\|_{P}=0.
\end{align*}
\end{defn}

\begin{defn}\label{eovakoapp1-new}\index{function!$({\mathrm R}_{\mathrm X},{\mathcal B},{\mathcal P})$-multi-almost periodic}\index{function!strongly $({\mathrm R}_{\mathrm X},{\mathcal B},{\mathcal P})$-multi-almost periodic}
Suppose that $\emptyset  \neq I \subseteq {\mathbb R}^{n},$ $F : I \times X \rightarrow Y$ is a given function, and \eqref{lepolepo121} holds.  
Then 
we say that the function $F(\cdot;\cdot)$ is $({\mathrm R}_{\mathrm X}, {\mathcal B},{\mathcal P},L)$-multi-almost periodic,
resp. strongly $({\mathrm R}_{\mathrm X}, {\mathcal B},{\mathcal P},L)$-multi-almost periodic in the case that $I={\mathbb R}^{n},$
if and only if for every $B\in {\mathcal B}$ and for every sequence $(({\bf b;{\bf x}})_{k}=((b_{k}^{1},b_{k}^{2},\cdot \cdot\cdot ,b_{k}^{n});x_{k})_{k}) \in {\mathrm R}_{\mathrm X}$ there exist a subsequence $(({\bf b;{\bf x}})_{k_{l}}=((b_{k_{l}}^{1},b_{k_{l}}^{2},\cdot \cdot\cdot , b_{k_{l}}^{n});x_{k_{l}})_{k_{l}})$ of $(({\bf b};{\bf x})_{k})$ and a function
$F^{\ast} : I \times X \rightarrow Y$ such that, for every $l\in {\mathbb N}$ and $x\in B,$ we have $F(\cdot +(b_{k_{l}}^{1},\cdot \cdot\cdot, b_{k_{l}}^{n});x+x_{k_{l}})-F^{\ast}(\cdot;x)\in P$ and, for every $B'\in L(B; ({\bf b;{\bf x}}) )$,
\begin{align}\label{love12345678ap1}
\lim_{l\rightarrow +\infty}\sup_{x\in B'}\Bigl \| F\bigl(\cdot +(b_{k_{l}}^{1},\cdot \cdot\cdot, b_{k_{l}}^{n});x+x_{k_{l}}\bigr)-F^{\ast}(\cdot;x)\Bigr\|_{P}=0,
\end{align}
resp. $F(\cdot +(b_{k_{l}}^{1},\cdot \cdot\cdot, b_{k_{l}}^{n});x+x_{k_{l}})-F^{\ast}(\cdot;x)\in P$, $F^{\ast}(\cdot -(b_{k_{l}}^{1},\cdot \cdot\cdot, b_{k_{l}}^{n});x-x_{k_{l}})-F(\cdot;x)\in P,$ \eqref{love12345678ap1} holds, and, for every $B'\in L(B; ({\bf b;{\bf x}}) )$,
\begin{align}\label{love12345678ap1s}
\lim_{l\rightarrow +\infty}\sup_{x\in B'}\Bigl \|F^{\ast}\bigl(\cdot -(b_{k_{l}}^{1},\cdot \cdot\cdot, b_{k_{l}}^{n});x-x_{k_{l}}\bigr)-F(\cdot;x)\Bigr\|_{P}=0.
\end{align}
\end{defn}

As above, we may simply conclude that 
the notion 
introduced in Definition \ref{eovakoapp-new} is a special case of the notion introduced in Definition \ref{eovakoapp1-new}. We will omit the term ``${\mathcal B}$'' from the notation if $X=\{0\},$ i.e., if we consider functions of the form $F : I \rightarrow Y.$

\begin{rem}\label{net}
\begin{itemize}
\item[(i)] In place of sequences, we can consider general nets here (see e.g., \cite[p. 9]{meise}); we will not consider such a general notion in this paper.
\item[(ii)] The notion introduced in Definition \ref{eovakoap}, resp. Definition \ref{eovakoap1}, is a special case of Definition \ref{eovakoapp-new}, resp. Definition \ref{eovakoapp1-new}, with
$P=l_{\infty}(I:Y):=\{ f : I \rightarrow Y : \sup_{{\bf t}\in I}\|f({\bf t})\|_{Y}<+\infty\},$ $d(f,g):=\sup_{{\bf t}\in I}\|f({\bf t})-g({\bf t})\|_{Y}$
and
${\mathcal P}:=(l_{\infty}(I:Y),d);$ see also \cite[Example 3.4, p. 14]{meise}. It suffices to require that $B\in L(B; {\bf b})$ [$B\in L(B; ({\bf b;{\bf x}}) )$] for all $B\in {\mathcal B}$ and ${\bf b}\in {\mathrm R}$ [$({\bf b;{\bf x}}) \in {\mathrm R}_{X}$].
\item[(iii)] Concerning the supremum formula for $({\mathrm R},{\mathcal B})$-multi-almost periodic functions, which has been established in \cite[Proposition 2.6]{marko-manuel-ap}, we will only note that its proof is based on the following argument: Suppose that $F: I \times X \rightarrow Y$ is $({\mathrm R},{\mathcal B},{\mathcal P},L)$-multi-almost periodic and \eqref{love12345678app-new} holds. If $P$ has a linear vector structure, then for each positive real number $\epsilon>0$ we have the existence of a number $l_{0}\in {\mathbb N}$ such that
$$
\sup_{x\in B'}\Bigl\| F(\cdot +b_{k_{l_{0}}};x)-F(\cdot +b_{k_{l}};x) \Bigr\|_{P}\leq \epsilon,\quad l\geq l_{0}.
$$
If ${\bf t}-b_{k_{l_{0}}}+b_{k_{l}}\in I$ for all ${\bf t}\in I,$ and $\| f(\cdot)\|_{P}=\| f(\cdot+\tau)\|_{P}$ for all $f\in P$ and $\tau \in {\mathbb R}^{n}$ with $I+\tau\subseteq I,$ then the above implies
$$
\sup_{x\in B'}\Bigl\| F(\cdot;x)-F(\cdot -b_{k_{l_{0}}}+b_{k_{l}};x) \Bigr\|_{P}\leq \epsilon,\quad l\geq l_{0}.
$$
\item[(iv)] The usually considered case is that one in which ${\mathrm R},$ resp. ${\mathrm R}_{X},$ is equal to the collection of all sequences in ${\mathbb R}^{n},$ resp. ${\mathbb R}^{n}\times X.$ In \cite[Example 8.1.3]{nova-selected}, we have provided an illustrative example showing the importance of case in which
${\mathrm R}$ is not equal to the collection of all sequences in ${\mathbb R}^{n}.$
\end{itemize}
\end{rem}

Suppose that $k\in {\mathbb N}$, $F_{i} : I \times X \rightarrow Y_{i},$
$P_{i} \subseteq Y^{I},$ the zero function belongs to $P_{i}$, 
${\mathcal P}_{i}=(P_{i},d_{i})$ is a metric space, and $Y_{i}$ is a complex Banach space ($1\leq i\leq k$). We define
the function $(F_{1},\cdot \cdot \cdot, F_{k}) : I \times X \rightarrow Y_{1}\times \cdot \cdot \cdot \times Y_{k}$ by
$$
(F_{1},\cdot \cdot \cdot, F_{k})({\bf t} ;x):=(F_{1}({\bf t};x) , \cdot \cdot \cdot, F_{k}({\bf t};x) ),\quad {\bf t} \in I,\ x\in X,
$$
as well as the product $P:=P_{1}\times \cdot \cdot \cdot \times P_{k}$ of metric spaces $P_{1},..., P_{k};$ let us recall that the metric $d(\cdot, \cdot)$ on $P$ is given by 
$$
d\bigl( (f_{1},...,f_{k}) , (g_{1},...,g_{k}) \bigr):=\sum_{i=1}^{k}d_{i}\bigl( f_{i},g_{i} \bigr),\quad f_{i},\ g_{i}\in P_{i}\ \ (1\leq i\leq k).
$$
The proof of following extension of \cite[Proposition 2.4]{marko-manuel-ap} is simple and therefore omitted.

\begin{prop}\label{kursk-kursk}
\begin{itemize}
\item[(i)] Suppose that $k\in {\mathbb N}$, $\emptyset \neq I \subseteq {\mathbb R}^{n},$ \eqref{lepolepo} holds and for any sequence  which belongs to 
${\mathrm R}$ we have that any its subsequence also belongs to ${\mathrm R}.$ 
If the function $F_{i}(\cdot;\cdot)$ is $({\mathrm R},{\mathcal B},{\mathcal P}_{i},L)$-multi-almost periodic, resp. strongly $({\mathrm R},{\mathcal B},{\mathcal P}_{i},L)$-multi-almost periodic in the case that $I={\mathbb R}^{n},$ for $1\leq i\leq k$, then the function $(F_{1},\cdot \cdot \cdot, F_{k})(\cdot;\cdot)$ is 
$({\mathrm R},{\mathcal B},{\mathcal P},L)$-multi-almost periodic, resp. strongly $({\mathrm R},{\mathcal B},{\mathcal P},L)$-multi-almost periodic.
\item[(ii)] Suppose that $k\in {\mathbb N}$, $\emptyset \neq I \subseteq {\mathbb R}^{n},$ \eqref{lepolepo121} holds and for any sequence  which belongs to 
${\mathrm R}_{X}$ we have that any its subsequence also belongs to ${\mathrm R}_{X}.$ 
If the function $F_{i}(\cdot;\cdot)$ is $({\mathrm R}_{X},{\mathcal B},{\mathcal P}_{i},L)$-multi-almost periodic, resp. strongly $({\mathrm R}_{X},{\mathcal B},{\mathcal P}_{i},L)$-multi-almost periodic in the case that $I={\mathbb R}^{n},$ for $1\leq i\leq k$, then the function $(F_{1},\cdot \cdot \cdot, F_{k})(\cdot;\cdot)$ is 
$({\mathrm R}_{X},{\mathcal B},{\mathcal P},L)$-multi-almost periodic, resp. strongly $({\mathrm R}_{X},{\mathcal B},{\mathcal P},L)$-multi-almost periodic.
\end{itemize}
\end{prop}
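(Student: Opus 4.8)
The plan is to argue by a finite induction on $k$, extracting nested subsequences so that the translates of each component function $F_{i}(\cdot;\cdot)$ converge in its own metric space ${\mathcal P}_{i}$, and then to invoke the elementary fact that convergence in the product metric $d=\sum_{i=1}^{k}d_{i}$ amounts to simultaneous convergence in each factor. Only part (i) needs to be written out in detail; part (ii) then follows by the obvious notational changes, replacing the shifts ${\bf t}+{\bf b}_{k_{l}}(l)$ by $({\bf t}+{\bf b}_{k_{l}}(l);x+x_{k_{l}})$ and the collections $L(B;{\bf b})$ by $L(B;({\bf b};{\bf x}))$.

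Fix $B\in {\mathcal B}$ and a sequence $({\bf b}_{k})\in {\mathrm R}$. Since $F_{1}(\cdot;\cdot)$ is $({\mathrm R},{\mathcal B},{\mathcal P}_{1},L)$-multi-almost periodic, there are a subsequence $({\bf b}^{(1)}_{k})$ of $({\bf b}_{k})$ and a function $F_{1}^{\ast}:I\times X\to Y_{1}$ with $F_{1}(\cdot+{\bf b}^{(1)}_{k};x)-F_{1}^{\ast}(\cdot;x)\in P_{1}$ for all $k\in {\mathbb N}$ and $x\in B$, and with \eqref{love12345678app-new} holding (for $F_{1},F_{1}^{\ast},P_{1}$) for every $B'\in L(B;{\bf b})$. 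By the standing hypothesis, $({\bf b}^{(1)}_{k})\in {\mathrm R}$, so the definition may be applied to $F_{2}(\cdot;\cdot)$ and this sequence, yielding a subsequence $({\bf b}^{(2)}_{k})$ of $({\bf b}^{(1)}_{k})$ and a limit function $F_{2}^{\ast}$ with the analogous properties; since a subsequence of a sequence convergent in a metric space has the same limit, the conclusion already obtained for $F_{1}$ persists along $({\bf b}^{(2)}_{k})$. After $k-1$ such steps we reach a single subsequence $({\bf b}_{k_{l}}):=({\bf b}^{(k)}_{k})$ of $({\bf b}_{k})$ along which, for every $i\in {\mathbb N}_{k}$, every $l\in {\mathbb N}$ and every $x\in B$, one has $F_{i}(\cdot+{\bf b}_{k_{l}};x)-F_{i}^{\ast}(\cdot;x)\in P_{i}$, and $\sup_{x\in B'}\|F_{i}(\cdot+{\bf b}_{k_{l}};x)-F_{i}^{\ast}(\cdot;x)\|_{P_{i}}\to 0$ as $l\to+\infty$, for every $B'\in L(B;{\bf b})$. (Here one tacitly uses that $L(B;\cdot)$ is unchanged when ${\bf b}$ is replaced by a subsequence of itself, which holds in all cases of interest.)

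Put $F^{\ast}:=(F_{1}^{\ast},\dots,F_{k}^{\ast})$. Since the zero element of $P=P_{1}\times\cdots\times P_{k}$ has every coordinate zero, the coordinatewise memberships above give $(F_{1},\dots,F_{k})(\cdot+{\bf b}_{k_{l}};x)-F^{\ast}(\cdot;x)\in P$ for all $l$ and $x\in B$; and because, for fixed $l$ and $x$,
\begin{align*}
\bigl\|(F_{1},\dots,F_{k})(\cdot+{\bf b}_{k_{l}};x)-F^{\ast}(\cdot;x)\bigr\|_{P}=\sum_{i=1}^{k}\bigl\|F_{i}(\cdot+{\bf b}_{k_{l}};x)-F_{i}^{\ast}(\cdot;x)\bigr\|_{P_{i}},
\end{align*}
applying $\sup_{x\in B'}$ (which is at most the sum of the finitely many coordinate suprema) and letting $l\to+\infty$ gives $0$. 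This is precisely the defining condition of Definition \ref{eovakoapp-new} for $(F_{1},\dots,F_{k})(\cdot;\cdot)$, so part (i) is proved. In the strong case $I={\mathbb R}^{n}$, one runs each of the $k$ extraction steps with the strong form of the definition for the corresponding $F_{i}$, which at once provides $F_{i}^{\ast}(\cdot-{\bf b}_{k_{l}};x)-F_{i}(\cdot;x)\in P_{i}$ together with the backward limit; the identical nesting argument and the additivity of $d$ then carry the backward statement over to $F^{\ast}$.

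The single load-bearing hypothesis is that every subsequence of a member of ${\mathrm R}$ is again a member of ${\mathrm R}$: without it the subsequence produced for $F_{1}$ could not be inserted into the defining property of $F_{2}$, and the whole iterated extraction would collapse. Everything else is routine bookkeeping with finitely many metrics.
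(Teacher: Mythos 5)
Your proof is correct and is exactly the routine nested-subsequence extraction that the paper has in mind when it declares the proof ``simple and therefore omitted'' (it is the same argument as for \cite[Proposition 2.4]{marko-manuel-ap}): the hypothesis that ${\mathrm R}$ is closed under passing to subsequences is indeed the one load-bearing assumption, and the additivity of the product metric does the rest. Your parenthetical caveat about $L(B;\cdot)$ being stable under replacing ${\bf b}$ by a subsequence is a fair observation of an implicit convention in the paper's setup, not a defect of your argument.
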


The convolution invariance of $({\mathrm R}_{\mathrm X},{\mathcal B})$-multi-almost periodicity has recently been analyzed in \cite[Proposition 2.5]{marko-manuel-ap}. In general case,
the convolution invariance of $({\mathrm R}_{X},{\mathcal B},{\mathcal P},L)$-multi-almost periodicity, resp. strong $({\mathrm R}_{X},{\mathcal B},{\mathcal P},L)$-multi-almost periodicity,
can be analyzed only if we assume that the metric space ${\mathcal P}$ has some extra features (the convolution invariance of $({\mathrm R},{\mathcal B},{\mathcal P},L)$-multi-almost periodicity, resp. strong $({\mathrm R},{\mathcal B},{\mathcal P},L)$-multi-almost periodicity,
can be analyzed similarly). Concerning this issue, we will  only formulate the following result without proof (see also the proof of Theorem \ref{nova} below and \cite{nova-selected} for many similar results of this type):

\begin{prop}\label{convdiaggas}
Let $P:=C_{0,\nu}({\mathbb R}^{n} :Y)$ and $d(f,g):=\| f-g\|_{C_{0,\nu}({\mathbb R}^{n} :Y)}$ for all $f,\ g\in P,$ and let there exist a positive real number $c>0$ such that $\nu({\bf t})\geq c$ for all ${\bf t}\in {\mathbb R}^{n}.$
Suppose that $h\in L^{1}({\mathbb R}^{n}),$ the function $F(\cdot ; \cdot)$ is $({\mathrm R}_{X},{\mathcal B},{\mathcal P},L)$-multi-almost periodic, resp. strongly $({\mathrm R}_{X},{\mathcal B},{\mathcal P},L)$-multi-almost periodic, as well as, for every $B\in {\mathcal B},$ $B'\in L(B; ({\bf b};{\bf x})),$ $l\in {\mathbb N}$ and
for every sequence $(({\bf b;{\bf x}})_{k}=((b_{k}^{1},b_{k}^{2},\cdot \cdot\cdot ,b_{k}^{n});x_{k})_{k}) \in {\mathrm R}_{\mathrm X}$, we have that 
the mapping ${\bf t}\mapsto F({\bf t}+b_{k_{l}}; x+x_{k_{l}}),$ ${\bf t}\in {\mathbb R}^{n}$ is bounded, uniformly for $x\in B'.$

If there exists a function $w : {\mathbb R}^{n} \rightarrow (0,\infty)$ such that $hw\in L^{1}({\mathbb R}^{n})$ and 
\begin{align}\label{svajni}
\nu(x+y)\leq \nu(x)w(y),\quad x,\ y\in {\mathbb R}^{n},
\end{align} 
then the function 
\begin{align}\label{lpm}
(h\ast F)({\bf t};x):=\int_{{\mathbb R}^{n}}h(\sigma)F({\bf t}-\sigma;x)\, d\sigma,\quad {\bf t}\in {\mathbb R}^{n},\ x\in X
\end{align}
is $({\mathrm R}_{X},{\mathcal B},{\mathcal P},L)$-multi-almost periodic, resp. strongly $({\mathrm R}_{X},{\mathcal B},{\mathcal P},L)$-multi-almost periodic.
\end{prop}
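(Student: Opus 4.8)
The plan is to isolate the one analytic ingredient, namely a mapping property of convolution on the weighted space $C_{0,\nu}(\Rn:Y)$, and then feed it mechanically into the definition of $({\mathrm R}_{X},{\mathcal B},{\mathcal P},L)$-multi-almost periodicity. Concretely, I would first establish the following claim: under the standing assumptions $\nu({\bf t})\geq c>0$, $hw\in L^{1}(\Rn)$ and $\nu(x+y)\leq \nu(x)w(y)$ for all $x,y\in\Rn$, convolution with $h$ maps $C_{0,\nu}(\Rn:Y)$ linearly and continuously into itself, with
$$
\bigl\|h\ast g\bigr\|_{C_{0,\nu}(\Rn:Y)}\leq \bigl\|hw\bigr\|_{L^{1}(\Rn)}\,\bigl\|g\bigr\|_{C_{0,\nu}(\Rn:Y)},\qquad g\in C_{0,\nu}(\Rn:Y).
$$

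For the claim, note that $\nu\geq c$ forces $\|g\|_{\infty}\leq c^{-1}\|g\|_{C_{0,\nu}(\Rn:Y)}<\infty$, so $g$ is bounded and continuous; hence $(h\ast g)({\bf t})=\int_{\Rn}h(\sigma)g({\bf t}-\sigma)\,d\sigma$ is a well-defined, uniformly continuous $Y$-valued function, by the classical argument for the convolution of an $L^{1}$-function with a bounded continuous function (continuity of translation in $L^{1}$). The weighted bound comes from the submultiplicativity \eqref{svajni}, which yields $\nu({\bf t})=\nu(({\bf t}-\sigma)+\sigma)\leq \nu({\bf t}-\sigma)w(\sigma)$, so that for every ${\bf t}\in\Rn$
$$
\bigl\|(h\ast g)({\bf t})\bigr\|_{Y}\,\nu({\bf t})\leq \int_{\Rn}\bigl|h(\sigma)\bigr|w(\sigma)\,\bigl\|g({\bf t}-\sigma)\bigr\|_{Y}\,\nu({\bf t}-\sigma)\,d\sigma\leq \bigl\|hw\bigr\|_{L^{1}(\Rn)}\,\bigl\|g\bigr\|_{C_{0,\nu}(\Rn:Y)}.
$$
Taking the supremum over ${\bf t}\in\Rn$ gives the norm estimate, while applying the dominated convergence theorem to the same integral (the integrand being dominated by $|h(\sigma)|w(\sigma)\|g\|_{C_{0,\nu}(\Rn:Y)}\in L^{1}(\Rn)$ and tending to $0$ pointwise in $\sigma$ as $|{\bf t}|\to\infty$, since $\|g({\bf t}-\sigma)\|_{Y}\nu({\bf t}-\sigma)\to 0$) shows that $\|(h\ast g)({\bf t})\|_{Y}\nu({\bf t})\to 0$ as $|{\bf t}|\to\infty$; hence $h\ast g\in C_{0,\nu}(\Rn:Y)$, proving the claim.

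Next, fix $B\in{\mathcal B}$ and $(({\bf b};{\bf x})_{k})\in{\mathrm R}_{\mathrm X}$. By the $({\mathrm R}_{X},{\mathcal B},{\mathcal P},L)$-multi-almost periodicity of $F$, choose a subsequence $(({\bf b};{\bf x})_{k_{l}})$ and $F^{\ast}:\Rn\times X\to Y$ with $F(\cdot+b_{k_{l}};x+x_{k_{l}})-F^{\ast}(\cdot;x)\in P=C_{0,\nu}(\Rn:Y)$ for all $l\in\NN$, $x\in B$, and with \eqref{love12345678ap1} for every $B'\in L(B;({\bf b};{\bf x}))$. The boundedness hypothesis on the translates ${\bf t}\mapsto F({\bf t}+b_{k_{l}};x+x_{k_{l}})$, together with $\nu\geq c$ (which makes the $C_{0,\nu}$-difference $F(\cdot+b_{k_{l}};x+x_{k_{l}})-F^{\ast}(\cdot;x)$ bounded), shows that ${\bf s}\mapsto F^{\ast}({\bf s};x)$ is bounded, so the convolutions $h\ast F$ and $\Phi^{\ast}:=(h\ast F^{\ast})$ are well defined on the relevant arguments. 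Directly from \eqref{lpm}, for every $l\in\NN$, $x\in B$ and ${\bf t}\in\Rn$,
$$
(h\ast F)\bigl({\bf t}+b_{k_{l}};x+x_{k_{l}}\bigr)-\Phi^{\ast}({\bf t};x)=\Bigl(h\ast\bigl[F(\cdot+b_{k_{l}};x+x_{k_{l}})-F^{\ast}(\cdot;x)\bigr]\Bigr)({\bf t}),
$$
and the bracketed function lies in $C_{0,\nu}(\Rn:Y)$. By the claim the left-hand side belongs to $P$, and, for every $B'\in L(B;({\bf b};{\bf x}))$,
$$
\sup_{x\in B'}\Bigl\|(h\ast F)\bigl(\cdot+b_{k_{l}};x+x_{k_{l}}\bigr)-\Phi^{\ast}(\cdot;x)\Bigr\|_{P}\leq \bigl\|hw\bigr\|_{L^{1}(\Rn)}\sup_{x\in B'}\Bigl\|F\bigl(\cdot+b_{k_{l}};x+x_{k_{l}}\bigr)-F^{\ast}(\cdot;x)\Bigr\|_{P}\To 0
$$
as $l\to+\infty$, by \eqref{love12345678ap1}. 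This is exactly the requirement of Definition \ref{eovakoapp1-new} for $h\ast F$ with limit function $\Phi^{\ast}$, so $h\ast F$ is $({\mathrm R}_{X},{\mathcal B},{\mathcal P},L)$-multi-almost periodic.

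In the strong case ($I=\Rn$) one runs the same argument once more: besides the above, \eqref{love12345678ap1s} gives $F^{\ast}(\cdot-b_{k_{l}};x-x_{k_{l}})-F(\cdot;x)\in C_{0,\nu}(\Rn:Y)$ with $\sup_{x\in B'}\|F^{\ast}(\cdot-b_{k_{l}};x-x_{k_{l}})-F(\cdot;x)\|_{P}\to 0$, and convolving with $h$ and invoking the claim yields $\Phi^{\ast}(\cdot-b_{k_{l}};x-x_{k_{l}})-(h\ast F)(\cdot;x)\in P$ with $\sup_{x\in B'}\|\Phi^{\ast}(\cdot-b_{k_{l}};x-x_{k_{l}})-(h\ast F)(\cdot;x)\|_{P}\to 0$ as $l\to+\infty$, which completes the proof. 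I expect the only genuine obstacle to be the claim, and within it the verification that $h\ast g$ vanishes at infinity in the $\nu$-weighted sense; this is precisely where the submultiplicativity hypothesis \eqref{svajni} and the integrability $hw\in L^{1}(\Rn)$ enter, the rest being bookkeeping with Definition \ref{eovakoapp1-new}.
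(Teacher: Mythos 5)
Your proof is correct, and it supplies exactly the argument the paper omits: Proposition \ref{convdiaggas} is stated without proof, with a pointer to the proof of Theorem \ref{nova}, whose key step is the same weighted estimate $\nu({\bf t})\leq\nu({\bf t}-\sigma)w(\sigma)$ combined with $hw\in L^{1}({\mathbb R}^{n})$ to bound $\|h\ast g\|_{P}$ by $\|hw\|_{L^{1}}\|g\|_{P}$. Your packaging of this as a boundedness-of-convolution claim on $C_{0,\nu}({\mathbb R}^{n}:Y)$, followed by the identity $(h\ast F)(\cdot+b_{k_{l}};x+x_{k_{l}})-(h\ast F^{\ast})(\cdot;x)=h\ast[F(\cdot+b_{k_{l}};x+x_{k_{l}})-F^{\ast}(\cdot;x)]$, is essentially the intended route.
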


\begin{thm}\label{nova}
Suppose that $(R({\bf t}))_{{\bf t}> {\bf 0}}\subseteq L(X,Y)$ is a strongly continuous operator family. 
Let $P:=C_{0,\nu}({\mathbb R}^{n} :Y)$ and $d(f,g):=\| f-g\|_{C_{0,\nu}({\mathbb R}^{n} :Y)}$ for all $f,\ g\in P,$ let there exist a positive real number $c>0$ such that $\nu({\bf t})\geq c$ for all ${\bf t}\in {\mathbb R}^{n},$ and let there exist a function $w : {\mathbb R}^{n} \rightarrow (0,\infty)$ such that \eqref{svajni} holds for all $x\in {\mathbb R}^{n},$ $y\in [0,\infty)^{n}$ and 
$\int_{(0,\infty)^{n}}(1+w({\bf t}))\|R({\bf t} )\|\, d{\bf t}<\infty .$ If $f : {\mathbb R}^{n} \rightarrow X$ is a bounded, continuous and $({\mathrm R},{\mathcal P},L)$-multi-almost periodic function, then the function $F: {\mathbb R}^{n} \rightarrow Y,$ given by
\begin{align}\label{wer}
F({\bf t}):=\int^{t_{1}}_{-\infty}\int^{t_{2}}_{-\infty}\cdot \cdot \cdot \int^{t_{n}}_{-\infty} R({\bf t}-{\bf s})f({\bf s})\, d{\bf s},\quad {\bf t}\in {\mathbb R}^{n},
\end{align}
is bounded, continuous and $({\mathrm R},{\mathcal P},L)$-multi-almost periodic.
\end{thm}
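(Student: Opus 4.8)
The plan is to rewrite $F$ as a restricted convolution and then check separately that $F$ is bounded, continuous and $({\mathrm R},{\mathcal P},L)$-multi-almost periodic; only the last is substantial (note that here $X=\{0\}$, so the ${\mathcal B}$- and $L$-components are vacuous). First I would apply the Fubini theorem and the substitution $\sigma={\bf t}-{\bf s}$ to recast \eqref{wer} as
$$
F({\bf t})=\int_{(0,\infty)^{n}}R(\sigma)f({\bf t}-\sigma)\,d\sigma,\qquad {\bf t}\in{\mathbb R}^{n},
$$
the integrand being measurable by the strong continuity of $(R({\bf t}))_{{\bf t}>{\bf 0}}$ and the continuity of $f$. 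Since $\int_{(0,\infty)^{n}}\|R(\sigma)\|\,d\sigma\le\int_{(0,\infty)^{n}}(1+w(\sigma))\|R(\sigma)\|\,d\sigma<\infty$ and $\|f\|_{\infty}<\infty$, we obtain $\|F({\bf t})\|_{Y}\le\|f\|_{\infty}\int_{(0,\infty)^{n}}\|R(\sigma)\|\,d\sigma$, so $F$ is bounded; and $F$ is continuous by the dominated convergence theorem, for if ${\bf t}_{m}\to{\bf t}$ the integrand converges pointwise in $\sigma$ (each $R(\sigma)$ is a bounded operator and $f$ is continuous) and is dominated by the integrable function $\|f\|_{\infty}\|R(\sigma)\|$.

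For the almost periodicity, fix a sequence $({\bf b}_{k})\in{\mathrm R}$ and use the hypothesis that $f$ is $({\mathrm R},{\mathcal P},L)$-multi-almost periodic — understood with $C_{0,\nu}({\mathbb R}^{n}:X)$ in place of $C_{0,\nu}({\mathbb R}^{n}:Y)$ — to extract a subsequence $({\bf b}_{k_{l}})$ and a function $f^{\ast}:{\mathbb R}^{n}\to X$ with $g_{l}:=f(\cdot+{\bf b}_{k_{l}})-f^{\ast}(\cdot)\in C_{0,\nu}({\mathbb R}^{n}:X)$ and $\|g_{l}\|_{C_{0,\nu}}\to 0$. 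Because $\nu\ge c>0$, convergence in $C_{0,\nu}$ forces uniform convergence, so $f^{\ast}$ is bounded by $\|f\|_{\infty}$ and continuous, whence $F^{\ast}({\bf t}):=\int_{(0,\infty)^{n}}R(\sigma)f^{\ast}({\bf t}-\sigma)\,d\sigma$ is a well-defined bounded continuous function (exactly as for $F$) and is the candidate limit function. For every ${\bf t}\in{\mathbb R}^{n}$,
$$
\bigl\|F({\bf t}+{\bf b}_{k_{l}})-F^{\ast}({\bf t})\bigr\|_{Y}\,\nu({\bf t})\le\int_{(0,\infty)^{n}}\|R(\sigma)\|\,\|g_{l}({\bf t}-\sigma)\|_{X}\,\nu({\bf t})\,d\sigma,
$$
and this is the only place the weight structure enters: since $\sigma\in(0,\infty)^{n}\subseteq[0,\infty)^{n}$, \eqref{svajni} applied with $x={\bf t}-\sigma$ and $y=\sigma$ gives $\nu({\bf t})\le\nu({\bf t}-\sigma)\,w(\sigma)$, so the right-hand side is at most $\int_{(0,\infty)^{n}}(1+w(\sigma))\|R(\sigma)\|\,\|g_{l}({\bf t}-\sigma)\|_{X}\,\nu({\bf t}-\sigma)\,d\sigma$.

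Both requirements of Definition \ref{eovakoapp-new} now follow from this single estimate. Bounding $\|g_{l}({\bf t}-\sigma)\|_{X}\nu({\bf t}-\sigma)\le\|g_{l}\|_{C_{0,\nu}}$ makes the integral at most $\|g_{l}\|_{C_{0,\nu}}\int_{(0,\infty)^{n}}(1+w(\sigma))\|R(\sigma)\|\,d\sigma$, which tends to $0$ uniformly in ${\bf t}$, yielding $\|F(\cdot+{\bf b}_{k_{l}})-F^{\ast}(\cdot)\|_{C_{0,\nu}}\to 0$; and for each fixed $l$, since $\|g_{l}({\bf s})\|_{X}\nu({\bf s})\to 0$ as $|{\bf s}|\to\infty$, a second application of dominated convergence (the integrand being dominated by $\|g_{l}\|_{C_{0,\nu}}(1+w(\sigma))\|R(\sigma)\|$) yields $\|F({\bf t}+{\bf b}_{k_{l}})-F^{\ast}({\bf t})\|_{Y}\nu({\bf t})\to 0$ as $|{\bf t}|\to\infty$, i.e.\ $F(\cdot+{\bf b}_{k_{l}})-F^{\ast}(\cdot)\in C_{0,\nu}({\mathbb R}^{n}:Y)$; together with the continuity of this difference, that is precisely what Definition \ref{eovakoapp-new} demands. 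I do not anticipate a genuine obstacle here — the argument is essentially the classical convolution-invariance proof — the only points needing care being to carry the weight across via \eqref{svajni} and to verify the ``vanishing at infinity'' clause of $C_{0,\nu}$ separately from the norm convergence, which the second dominated-convergence step handles.
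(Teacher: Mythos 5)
Your proposal is correct and follows essentially the same route as the paper's proof: rewrite \eqref{wer} as $\int_{(0,\infty)^n}R(\sigma)f({\bf t}-\sigma)\,d\sigma$, transfer the weight via \eqref{svajni} to get the bound $\|F(\cdot+{\bf b}_{k_l})-F^{\ast}(\cdot)\|_{P}\le\|f(\cdot+{\bf b}_{k_l})-f^{\ast}(\cdot)\|_{P}\int_{(0,\infty)^n}\|R(\sigma)\|w(\sigma)\,d\sigma$, and handle boundedness and continuity by dominated convergence. Your extra dominated-convergence step checking that the difference actually lies in $C_{0,\nu}({\mathbb R}^{n}:Y)$ (the vanishing-at-infinity clause) is a point the paper's proof leaves implicit, so if anything your write-up is slightly more complete.
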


\begin{proof}
Let $({\bf b}_{k}=(b_{k}^{1},b_{k}^{2},\cdot \cdot\cdot ,b_{k}^{n})) \in {\mathrm R}$ be given. Then there exist a subsequence $({\bf b}_{k_{l}}=(b_{k_{l}}^{1},b_{k_{l}}^{2},\cdot \cdot\cdot , b_{k_{l}}^{n}))$ of $({\bf b}_{k})$ and a function
$f^{\ast} : {\mathbb R}^{n} \rightarrow X$ such that
\begin{align}\label{bgr}
\lim_{l\rightarrow +\infty}\Bigl[f\bigl(\cdot +(b_{k_{l}}^{1},\cdot \cdot\cdot, b_{k_{l}}^{n})\bigr)-f^{\ast}(\cdot)\Bigr]=0 
\end{align}
in $P.$ Since we have assumed that the function $f(\cdot)$ is bounded and there exists a positive real number $c>0$ such that $\nu({\bf t})\geq c$ for all ${\bf t}\in {\mathbb R}^{n},$ our choice of function space $P$ simply yields that
the function $f^{\ast} : {\mathbb R}^{n} \rightarrow X$ is bounded.
Let us prove that the function $f^{\ast}(\cdot)$ is continuous at the point ${\bf t}\in {\mathbb R}^{n}.$
If ${\bf t}'\in {\mathbb R}^{n}$ and $\epsilon>0,$ then we have
\begin{align*}
& \bigl\|  f^{\ast}({\bf t})-f^{\ast}\bigl({\bf t}'\bigr)  \bigr\|_{Y}
\\& \leq \bigl\| f^{\ast}({\bf t})-f\bigl( {\bf t}+b_{k_{l}} \bigr)\bigr\|_{Y} +\bigl\|f\bigl( {\bf t}+b_{k_{l}} \bigr)-f\bigl( {\bf t}'+b_{k_{l}} \bigr)\bigr\|_{Y}+\bigl\|f\bigl( {\bf t}'+b_{k_{l}} \bigr)-f^{\ast}\bigl({\bf t}'\bigr)\bigr\|_{Y}.
\end{align*}
The first addend and the third addend are less or equal than $\epsilon/3$ for some $l=l_{0}\in {\mathbb N}$, since \eqref{bgr} holds and $\nu({\bf t})\geq c$ for all ${\bf t}\in {\mathbb R}^{n},$ while the second addend is less or equal than $\epsilon/3$ whenever $\|f({\bf t}+b_{k_{l_{0}}})-f({\bf t}'+b_{k_{l_{0}}})\|_{Y}\leq \delta ,$ which is determined from the continuity of function $f(\cdot)$ at the point ${\bf t};$ this implies the claimed.
Moreover, we have 
\begin{align*}
F({\bf t})=\int_{[0,\infty)^{n}}R({\bf s}) f({\bf t}-{\bf s})\, d{\bf s}\mbox{ for all }{\bf t}\in {\mathbb R}^{n},
\end{align*}
the function
$F(\cdot)$ is bounded and continuous due to the dominated convergence theorem. 
The integral $ \int_{[0,\infty)^{n}}R({\bf s}) f^{\ast}({\bf t}-{\bf s})\, d{\bf s}$
is well defined for all ${\bf t}\in {\mathbb R}^{n},$ 
and we have:
$$
\lim_{l\rightarrow \infty} \int_{[0,\infty)^{n}}R({\bf s}) \bigl[f(\cdot+{\bf b}_{k_{l}}-{\bf s})- f^{\ast}(\cdot-{\bf s})\bigr]\, d{\bf s}=0
$$
in $P$, because 
\begin{align*}
\Biggl\|& \int_{[0,\infty)^{n}}R({\bf s})  f({\bf t}+{\bf b}_{k_{l}}-{\bf s})\, d{\bf s}- \int_{[0,\infty)^{n}}R({\bf s}) f^{\ast}({\bf t}-{\bf s})\, d{\bf s}\Biggr\|_{Y}\nu({\bf t})
\\& \leq \int_{[0,\infty)^{n}}\|R({\bf s})\| \cdot \bigl\| f({\bf t}+{\bf b}_{k_{l}}-{\bf s})- f^{\ast}({\bf t}-{\bf s}) \bigr\|\nu({\bf t})\, d{\bf s}
\\& \leq \int_{[0,\infty)^{n}}\|R({\bf s})\| \cdot \bigl\| f({\bf t}+{\bf b}_{k_{l}}-{\bf s})- f^{\ast}({\bf t}-{\bf s}) \bigr\|\nu({\bf t}-{\bf s})w({\bf s})\, d{\bf s}
\\& \leq \bigl\|f(\cdot+{\bf b}_{k_{l}}-{\bf s})- f^{\ast}(\cdot-{\bf s})\bigr\|_{P} \cdot \int_{[0,\infty)^{n}}\|R({\bf s})\|w({\bf s})\, d{\bf s},\quad {\bf t}\in {\mathbb R}^{n},\ l\in {\mathbb N}.
\end{align*}
We can similarly prove that
$$
\lim_{l\rightarrow \infty} \int_{[0,\infty)^{n}}R({\bf s}) \bigl[f^{\ast}(\cdot-{\bf b}_{k_{l}}-{\bf s})-f(\cdot-{\bf s})\bigr]\, d{\bf s}= 0
$$
in $P,$
which completes the proof. 
\end{proof}

Before proceeding any further, we feel it is our duty to say that the assumptions in Proposition \ref{convdiaggas}, resp. Theorem \ref{nova}, imply that the function $F(\cdot;\cdot)$ under our consideration is $({\mathrm R}_{X},{\mathcal B})$-multi-almost periodic, resp. $({\mathrm R},{\mathcal B})$-multi-almost periodic. Therefore, we deal here with certain subclasses of $({\mathrm R}_{X},{\mathcal B})$-multi-almost periodic functions, resp. $({\mathrm R},{\mathcal B})$-multi-almost periodic functions; see also Remark \ref{obeserve}(i) below, as well as Proposition \ref{convdiaggases} and Theorem \ref{novaes}, where we deal with certain subclasses of almost automorphic functions.

In \cite[Proposition 2.7, Proposition 2.8]{marko-manuel-ap}, we have examined the uniform convergence of  $({\mathrm R}_{\mathrm X},{\mathcal B})$-multi-almost periodic functions [$({\mathrm R},{\mathcal B})$-multi-almost periodic functions]. The situation is much more complicated with the notion introduced in this paper; concerning the above-mentioned issue, 
we will first note that,  in Definition \ref{eovakoapp-new}, we can additionally require that for each $l\in {\mathbb N}$ we also have $F(\cdot +(b_{k_{l}}^{1},\cdot \cdot\cdot, b_{k_{l}}^{n});x)\in P,$ $F^{\ast}(\cdot;x)\in P$, resp. $F(\cdot +(b_{k_{l}}^{1},\cdot \cdot\cdot, b_{k_{l}}^{n});x)\in P,$ $F^{\ast}(\cdot;x)\in P$, $F^{\ast}(\cdot -(b_{k_{l}}^{1},\cdot \cdot\cdot, b_{k_{l}}^{n});x)\in P$ and $F(\cdot;x)\in P,$ if $I={\mathbb R}^{n}$ and we consider strongly $({\mathrm R},{\mathcal B},{\mathcal P},L)$-multi-almost periodic functions; if this is the case, then we say that the function $F(\cdot;\cdot)$ is $({\mathrm R},{\mathcal B},{\mathcal P},L)$-multi-almost periodic of type $1$, resp. strongly $({\mathrm R},{\mathcal B},{\mathcal P},L)$-multi-almost periodic of type $1$. The class of $({\mathrm R}_{X},{\mathcal B},{\mathcal P},L)$-multi-almost periodic functions of type $1$, resp. strongly $({\mathrm R}_{X},{\mathcal B},{\mathcal P},L)$-multi-almost periodic functions of type $1,$ is introduced analogously. Now we are able to state and prove the following result:

\begin{prop}\label{mackat}
Suppose that $P$ has a linear vector structure, $P$ is complete and the metric $d$ is translation invariant in the sense that $d(f+g,h+g)=d(f,h)$ whenever $f,\ h,\ f+g,\ h+g\in P.$ 
Suppose further that, for each integer $j\in {\mathbb N}$ the function $F_{j} : I \times X \rightarrow Y$ is $({\mathrm R}_{\mathrm X}, {\mathcal B},{\mathcal P},L)$-multi-almost periodic of type $1$
as well as that, for every sequence  which belongs to 
${\mathrm R}_{\mathrm X},$ any its subsequence also belongs to ${\mathrm R}_{\mathrm X}.$ 
If $F : I \times X \rightarrow Y$ and for each $B\in {\mathcal B},$
$({\bf b};{\bf x})=((b_{k};x_{k}))=((b_{k}^{1},b_{k}^{2},\cdot \cdot\cdot ,b_{k}^{n});x_{k})\in {\mathrm R}_{X},$ $B'\in L(B;({\bf b};{\bf x}))$ and 
we have 
\begin{align}\label{konver}
\lim_{(i,l)\rightarrow +\infty} \sup_{x\in B'}\Bigl\| F_{i}\bigl(\cdot +b_{k_{l}};x+x_{k_{l}}\bigr)- F\bigl(\cdot +b_{k_{l}};x+x_{k_{l}}\bigr)\Bigr\|_{P}=0,
\end{align}
then the function $F(\cdot ;\cdot)$ is $({\mathrm R}_{\mathrm X}, {\mathcal B},{\mathcal P},L)$-multi-almost periodic of type $1.$
\end{prop}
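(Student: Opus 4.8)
The plan is to run the classical diagonal argument for uniform limits of almost periodic functions, now carried out inside the metric space $\mathcal{P}$ and tracking the ``type $1$'' membership requirements. Fix $B\in\mathcal{B}$ and a sequence $(({\bf b};{\bf x})_{k})\in{\mathrm R}_{X}$. First I would, for each $j\in{\mathbb N}$, use that $F_{j}$ is $({\mathrm R}_{X},{\mathcal B},{\mathcal P},L)$-multi-almost periodic of type $1$ together with the hypothesis that every subsequence of a sequence in ${\mathrm R}_{X}$ again belongs to ${\mathrm R}_{X}$, to extract successively nested subsequences: start from $(({\bf b};{\bf x})_{k})$, pass to a subsequence along which $F_{1}$ has a limit function $F_{1}^{\ast}$ with $F_{1}(\cdot+b_{k_{l}};x+x_{k_{l}})\in P$, $F_{1}^{\ast}(\cdot;x)\in P$ and the sup-over-$B'$ convergence \eqref{love12345678ap1}; then thin further for $F_{2}$, and so on. Taking the diagonal subsequence, call it $({\bf b}_{k_{l}};{\bf x}_{k_{l}})$, we get simultaneously for every $j$ a limit function $F_{j}^{\ast}$ with $\lim_{l}\sup_{x\in B'}\|F_{j}(\cdot+b_{k_{l}};x+x_{k_{l}})-F_{j}^{\ast}(\cdot;x)\|_{P}=0$ for each $B'\in L(B;({\bf b};{\bf x}))$.

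Next I would produce the candidate limit $F^{\ast}$ for $F$. The natural guess is $F^{\ast}(\cdot;x):=\lim_{j\to\infty}F_{j}^{\ast}(\cdot;x)$ in $P$; to justify that this limit exists one shows $(F_{j}^{\ast}(\cdot;x))_{j}$ is Cauchy in $P$. Here is where completeness of $P$ and translation invariance of $d$ enter. For fixed $x$, write
\begin{align*}
d\bigl(F_{i}^{\ast}(\cdot;x),F_{j}^{\ast}(\cdot;x)\bigr)
&\leq d\bigl(F_{i}^{\ast}(\cdot;x),F_{i}(\cdot+b_{k_{l}};x+x_{k_{l}})\bigr)
+d\bigl(F_{i}(\cdot+b_{k_{l}};x+x_{k_{l}}),F(\cdot+b_{k_{l}};x+x_{k_{l}})\bigr)\\
&\quad +d\bigl(F(\cdot+b_{k_{l}};x+x_{k_{l}}),F_{j}(\cdot+b_{k_{l}};x+x_{k_{l}})\bigr)
+d\bigl(F_{j}(\cdot+b_{k_{l}};x+x_{k_{l}}),F_{j}^{\ast}(\cdot;x)\bigr),
\end{align*}
where the middle two terms are controlled uniformly in $x\in B'$ (hence for our fixed $x$) by the joint convergence hypothesis \eqref{konver} once $i,j,l$ are large, and the outer two terms are small once $l$ is large for the given $i,j$. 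Choosing $l$ large enough (depending on $i,j$) after first making $i,j$ large via \eqref{konver}, we get the Cauchy estimate; translation invariance is what lets us compare $d$ of shifted arguments with $d$ of the differences, since $P$ only has a linear vector structure and a priori the norm-like quantity need not be shift invariant on all of $Y^{I}$. Completeness of $P$ then gives $F^{\ast}(\cdot;x)\in P$, and by an identical three-$\epsilon$ estimate one gets $\lim_{l}\sup_{x\in B'}\|F(\cdot+b_{k_{l}};x+x_{k_{l}})-F^{\ast}(\cdot;x)\|_{P}=0$ for every $B'\in L(B;({\bf b};{\bf x}))$, as well as $F(\cdot+b_{k_{l}};x+x_{k_{l}})-F^{\ast}(\cdot;x)\in P$, $F(\cdot+b_{k_{l}};x+x_{k_{l}})\in P$ and $F^{\ast}(\cdot;x)\in P$ for each $l$, which is exactly type $1$ membership.

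The main obstacle I anticipate is the uniformity bookkeeping in the double limit \eqref{konver}: one must be careful that the index $l$ chosen to make the ``$F_{i}$ versus $F_{i}^{\ast}$'' and ``$F_{j}$ versus $F_{j}^{\ast}$'' tails small is allowed to depend on $i$ and $j$, while the index needed to make the ``$F_{i}$ versus $F$'' tail small (via \eqref{konver}) requires $i,l\to\infty$ jointly — so the order of quantifiers must be: given $\epsilon$, first use \eqref{konver} to fix $i_{0}$ and a threshold $l_{0}(i)$ so the middle terms are $\leq\epsilon/3$ for $i,j\geq i_{0}$, $l\geq l_{0}$; then for those fixed large $i,j$ pick $l$ even larger to kill the two outer terms. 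A secondary point to handle carefully is that $d$ is translation invariant only on the set where all four arguments lie in $P$, so one must check at each step that the relevant differences and shifts genuinely belong to $P$ — but this is guaranteed precisely by the type $1$ hypothesis on each $F_{j}$ and by \eqref{konver} (which forces $F(\cdot+b_{k_{l}};x+x_{k_{l}})-F_{j}(\cdot+b_{k_{l}};x+x_{k_{l}})\in P$, since the $P$-distance is finite). Everything else is a routine three-$\epsilon$ argument, so I would state the diagonalization and the Cauchy estimate in detail and leave the verbatim repetitions to the reader.
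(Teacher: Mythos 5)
Your proposal is correct and follows essentially the same route as the paper's proof: a diagonal extraction of a common subsequence along which each $F_{j}$ has a limit $F_{j}^{\ast}$, a Cauchy estimate for $(F_{j}^{\ast}(\cdot;x))_{j}$ in $P$ obtained by inserting the translated $F_{i}$, $F$, $F_{j}$ terms and invoking \eqref{konver} together with translation invariance, completeness of $P$ to produce $F^{\ast}$, and a final three-$\epsilon$ estimate for the convergence of $F(\cdot+b_{k_{l}};x+x_{k_{l}})$ to $F^{\ast}(\cdot;x)$. Your explicit attention to the quantifier order in the joint limit and to the type-$1$ membership bookkeeping matches (and slightly sharpens) what the paper leaves implicit.
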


\begin{proof}
Since $d$ is translation invariant, we have $\| f+g\|_{P}\leq \|f\|_{P}+\|g\|_{P}$ for all $f,\ g\in P$ such that $f+g\in P.$
Let the set $B\in {\mathcal B}$ and the sequence $(({\bf b};{\bf x})=((b_{k}^{1},b_{k}^{2},\cdot \cdot\cdot ,b_{k}^{n});x_{k}))\in {\mathrm R}_{\mathrm X}$ be given. Let $B'\in L(B;({\bf b};{\bf x}))$ be given, as well. Since 
for each sequence  which belongs to 
${\mathrm R}_{\mathrm X}$ we have that any its subsequence also belongs to ${\mathrm R}_{\mathrm X},$
the diagonal procedure can be used to get the existence of 
a subsequence $(({\bf b}_{k_{l}};x_{k_{l}})=((b_{k_{l}}^{1},b_{k_{l}}^{2},\cdot \cdot\cdot , b_{k_{l}}^{n});x_{k_{l}}))$ of $(({\bf b}_{k};x_{k}))$ such that for each integer $j\in {\mathbb N}$ there exists a function
$F^{\ast}_{j} : I \times X \rightarrow Y$ such that
\begin{align}\label{metalac12345}
\lim_{l\rightarrow +\infty}\sup_{x\in B'}\Bigl\| F_{j}\bigl(\cdot +(b_{k_{l}}^{1},\cdot \cdot\cdot, b_{k_{l}}^{n});x+x_{k_{l}}\bigr)-F^{\ast}_{j}(\cdot;x) \Bigr\|_{P}=0.
\end{align}
Let a real number $\epsilon>0$ be fixed.
Since 
\begin{align*}
\sup_{x\in B'}\Bigl\| F_{i}^{\ast}(\cdot ; x) &-F_{j}^{\ast}(\cdot ; x)\Bigr\|_{P}\leq \sup_{x\in B'} \Bigl\| F_{i}^{\ast}(\cdot ; x)-F_{i}\bigl(\cdot+(b_{k_{l}}^{1},\cdot \cdot\cdot, b_{k_{l}}^{n}) ; x+x_{k_{l}}\bigr)\Bigr\|_{P}
\\&+\sup_{x\in B'}\Bigl\| F_{i}\bigl(\cdot +(b_{k_{l}}^{1},\cdot \cdot\cdot, b_{k_{l}}^{n}); x+x_{k_{l}}\bigr)-F_{j}\bigl(\cdot +(b_{k_{l}}^{1},\cdot \cdot\cdot, b_{k_{l}}^{n}); x+x_{k_{l}}\bigr)\Bigr\|_{P}
\\&+\sup_{x\in B'}\Bigl\| F_{j}\bigl(\cdot +(b_{k_{l}}^{1},\cdot \cdot\cdot, b_{k_{l}}^{n}); x+x_{k_{l}}\bigr)-F_{j}^{\ast}(\cdot ; x)\Bigr\|_{P},
\end{align*}
and \eqref{metalac12345} holds, there exists an integer $l_{0}\in {\mathbb N}$ such that for all integers $l\geq l_{0}$ we have:
\begin{align*}
\sup_{x\in B'}\Bigl\| & F_{i}^{\ast}(\cdot ; x)-F_{i}\bigl(\cdot+(b_{k_{l}}^{1},\cdot \cdot\cdot, b_{k_{l}}^{n}) ; x+x_{k_{l}}\bigr)\Bigr\|_{P}
\\&+\sup_{x\in B'}\Bigl\| F_{j}\bigl(\cdot+(b_{k_{l}}^{1},\cdot \cdot\cdot, b_{k_{l}}^{n}); x+x_{k_{l}}\bigr)-F_{j}^{\ast}(\cdot ; x)\Bigr\|_{P}<2\epsilon/3.
\end{align*}
Using \eqref{konver}, we get the existence of an integer $N(\epsilon)\in {\mathbb N}$ such that
for all integers $i,\ j\in {\mathbb N}$ with $\min(i,j)\geq N(\epsilon)$ we have
\begin{align*}
\sup_{x\in B'}\Bigl\| F_{i}\bigl(\cdot +(b_{k_{l_{0}}}^{1},\cdot \cdot\cdot, b_{k_{l_{0}}}^{n}); x+x_{k_{l_{0}}}\bigr)-F_{j}\bigl(\cdot +(b_{k_{l_{0}}}^{1},\cdot \cdot\cdot, b_{k_{l_{0}}}^{n}); x+x_{k_{l_{0}}}\bigr)\Bigr\|_{P}<\epsilon/3.
\end{align*}
This implies that $(F_{j}^{\ast}(\cdot ;x))$ is a Cauchy sequence in $P$ for each element $x\in B'$ and therefore convergent to a function $F^{\ast}(\cdot ;x)\in P,$ say; clearly, 
$$
\lim_{j\rightarrow +\infty}\sup_{x\in B'} \bigl\|F_{j}^{\ast}(\cdot ;x)-F^{\ast}(\cdot ;x)\bigr\|_{P}=0.
$$ 
Further on, observe that for each $j\in {\mathbb N}$ we have:
\begin{align*}
\sup_{x\in B'} &\Bigl\| F\bigl(\cdot +(b_{k_{l}}^{1},\cdot \cdot\cdot, b_{k_{l}}^{n});x+x_{k_{l}}\bigr)-F^{\ast}(\cdot;x) \Bigr\|_{P}
\\& \leq \sup_{x\in B'} \Bigl\| F\bigl(\cdot +(b_{k_{l}}^{1},\cdot \cdot\cdot, b_{k_{l}}^{n});x+x_{k_{l}}\bigr)-F_{j}\bigl(\cdot +(b_{k_{l}}^{1},\cdot \cdot\cdot, b_{k_{l}}^{n});x+x_{k_{l}}\bigr) \Bigr\|_{P}
\\& +\sup_{x\in B'}\Bigl\| F_{j}\bigl(\cdot +(b_{k_{l}}^{1},\cdot \cdot\cdot, b_{k_{l}}^{n});x+x_{k_{l}}\bigr)-F^{\ast}_{j}(\cdot;x) \Bigr\|_{P}
+\sup_{x\in B'} \Bigl\| F^{\ast}_{j}(\cdot;x) - F^{\ast}(\cdot;x) \Bigr\|_{P}.
\end{align*}
There exists a number $j_{0}(\epsilon)\in {\mathbb N}$ such that for all integers $j\geq j_{0}$ we have
that the third addend in the above estimate is less or greater than $\epsilon/3.$ After that, using condition \eqref{konver}, we can find an integer $N\in {\mathbb N}$ such that $N\geq j_{0}$ and, for every $l\in {\mathbb N}$ and $j\in {\mathbb N}$ with $\min(j,l)\geq N,$ we have  
$$
\sup_{x\in B'} \Bigl\| F_{j}\bigl(\cdot+(b_{k_{l}}^{1},\cdot \cdot\cdot, b_{k_{l}}^{n});x+x_{k_{l}}\bigr)-F^{\ast}_{j}(\cdot;x) \Bigr\|_{P}<\epsilon/3. 
$$
Take $j=N$ and apply \eqref{metalac12345} to complete the
proof.
\end{proof}

\begin{rem}\label{rem1}
Consider the situation of Theorem \ref{nova}. If we assume that $f : {\mathbb R}^{n} \rightarrow X$ is a bounded, continuous and $({\mathrm R},{\mathcal P},L)$-multi-almost periodic function of type $1$, then the function $F: {\mathbb R}^{n} \rightarrow Y,$ given by \eqref{wer},
will be bounded, continuous and $({\mathrm R},{\mathcal P},L)$-multi-almost periodic of type $1$.
\end{rem}

We can simply prove the following:
\begin{itemize}
\item[(i)] Suppose that $c\in {\mathbb C},$ $cf\in P$ for all $f\in P,$ and there exists a finite real number $\phi(c)>0$ such that $\|cf\|_{P}\leq \phi(c)\|f\|_{P}$ for all $f\in P.$
If the function $F : I\times X \rightarrow Y$ is 
(strongly) $({\mathrm R},{\mathcal B},{\mathcal P},L)$-multi-almost periodic [(strongly) $({\mathrm R}_{X},{\mathcal B},{\mathcal P},L)$-multi-almost periodic], then the function $cF(\cdot;\cdot)$ is likewise 
(strongly) $({\mathrm R},{\mathcal B},{\mathcal P},L)$-multi-almost periodic [(strongly) $({\mathrm R}_{X},{\mathcal B},{\mathcal P},L)$-multi-almost periodic].
\item[(ii)] Suppose that $P$ has a linear vector structure, the metric $d$ is translation invariant and, for every complex number $c\in {\mathbb C},$ we have $cf\in P,$ $f\in P$ and there exists a finite real number $\phi(c)>0$ such that $\|cf\|_{P}\leq \phi(c)\|f\|_{P}$ for all $f\in P.$
Suppose also that for each sequence of collection ${\mathrm R}$ [${\mathrm R}_{X}$] any its subsequence also belongs to ${\mathrm R}$ [${\mathrm R}_{X}$]. Then the space consisting of all (strongly) $({\mathrm R},{\mathcal B},{\mathcal P},L)$-multi-almost periodic [(strongly) $({\mathrm R}_{X},{\mathcal B},{\mathcal P},L)$-multi-almost periodic] functions is a vector space.
\item[(iii)] Suppose that $\tau \in {\mathbb R}^{n}$, $\tau +I\subseteq I,$ $x_{0}\in X$ and, for every $f\in P,$ we have $f(\cdot+\tau)\in P$ and the existence of a finite real number $c_{\tau}>0$ such that $\| f(\cdot +\tau)\|_{P}\leq c_{\tau}\|f\|_{P}$ for all $f\in P.$ Define 
${\mathcal B}_{x_{0}}:= \{-x_{0}+B : B\in {\mathcal B}\}$ and, for every $B\in {\mathcal B}$ and ${\bf b}\in {\mathrm R}$ [$({\bf b};{\bf x})\in {\mathrm R}_{X}$], $L'(-x_{0}+B;{\bf b}):=\{
-x_{0}+B' : B'\in L(B;{\bf b})\}$ [$L'(-x_{0}+B;({\bf b};{\bf x})):=\{
-x_{0}+B' : B'\in L(B;({\bf b},{\bf x}))\}$]. If the function $F : I\times X \rightarrow Y$ is (strongly) $({\mathrm R},{\mathcal B},{\mathcal P},L)$-multi-almost periodic [(strongly) $({\mathrm R}_{X},{\mathcal B},{\mathcal P},L)$-multi-almost periodic], then the function $F(\cdot+\tau;\cdot +x_{0})$ is (strongly) $({\mathrm R},{\mathcal B}_{x_{0}},{\mathcal P},L')$-multi-almost periodic [(strongly) $({\mathrm R}_{X},{\mathcal B}_{x_{0}},{\mathcal P},L')$-multi-almost periodic].
\end{itemize}

The pointwise products of $({\mathrm R},{\mathcal B})$-multi-almost periodic functions have been analyzed in \cite[Proposition 2.20]{marko-manuel-ap}; we would like to note that the pointwise products of (strongly) $({\mathrm R},{\mathcal B},{\mathcal P},L)$-multi-almost periodic functions can be analyzed under certain extra assumptions. Details can be left to the interested readers.

Concerning composition principles, we will prove only one result which corresponds to \cite[Theorem 2.47]{marko-manuel-ap}.
Suppose that $F : I \times X \rightarrow Y$ and $G : I \times Y \rightarrow Z$ are given functions. We analyze here the multi-dimensional Nemytskii operator
$W : I  \times X \rightarrow Z,$ defined by
$$
W({\bf t}; x):=G\bigl({\bf t} ; F({\bf t}; x)\bigr),\quad {\bf t} \in I,\ x\in X.
$$
Let ${\mathcal P}_{F}=(P_{F},d_{f}),$ ${\mathcal P}_{G}=(P_{G},d_{G})$ and ${\mathcal P}_{W}=(P_{W},d_{W})$ be three metric spaces consisting of certain functions from $Y^{I},$ $Z^{I}$
and $Z^{I},$ respectively.

\begin{thm}\label{eovakoonakoap}
Suppose that $F : I \times X \rightarrow Y$ is $({\mathrm R},{\mathcal B},{\mathcal P}_{F},L)$-multi-almost periodic, resp. strongly 
$({\mathrm R},{\mathcal B},{\mathcal P}_{F},L)$-multi-almost periodic in the case that $I={\mathbb R}^{n}$,
$G : I \times Y \rightarrow Z$, the metric $d_{W}$ is translation invariant,
for any sequence belonging to ${\mathrm R}$ any its subsequence belongs to ${\mathrm R}$,
as well as the following condition holds:
\begin{itemize}
\item[(i)] For every ${\bf b}\in {\mathrm R},$ for every $B\in {\mathcal B}$, for every functions $f : I \times X \rightarrow Y$ and $g : I \times X \rightarrow Y$, for every sequences of functions $(f_{l} : I \times X \rightarrow Y)$, $(g_{l} : I \times X \rightarrow Y)$ and $(h_{l} : I \times X \rightarrow Y)$, and for every set $B'\in L(B;{\bf b}),$ we can find a subsequence ${\bf b}'$
of ${\bf b},$ a subsequence $(f_{m})$ of $(f_{l}),$
a
function $G^{\ast} : I \times Y \rightarrow Z$ and a finite real constant $c>0$
such that
\begin{align}\label{pariz}
\lim_{m\rightarrow +\infty}\sup_{x\in B'}\Bigl\|G\bigl(\cdot + {\bf b}_{m}' ; f(\cdot;x)\bigr)-G^{\ast}(\cdot;f(\cdot;x))\Bigr\|_{P_{G}}=0,
\end{align}
and
\begin{align}\label{pariz123}
\sup_{x\in B'}\Bigl\|G\bigl(\cdot + {\bf b}_{m} ; g_{m}(\cdot;x)\bigr)-G(\cdot + {\bf b}_{m};g(\cdot;x))\Bigr\|_{P_{G}}\leq c\sup_{x\in B'}\Bigl\| g_{m}(\cdot;x)-g(\cdot;x)\Bigr\|_{P_{F}}, 
\end{align}
whenever $m\in {\mathbb N}$ and $g_{m}(\cdot;x)-g(\cdot;x)\in P_{F}$ 
for all $x\in B',$ resp. \eqref{pariz} and \eqref{pariz123} hold, as well as
\begin{align*}
\lim_{m\rightarrow +\infty}\sup_{x\in B'}\Bigl\|G^{\ast}\bigl(\cdot - {\bf b}_{m}' ; f_{m}(\cdot;x)\bigr)-G(\cdot;f_{m}(\cdot;x))\Bigr\|_{P_{G}}=0
\end{align*}
and
\begin{align*}
\sup_{x\in B'}\Bigl\|G\bigl(\cdot  ; h_{m}(\cdot;x)\bigr)-G(\cdot ;h(\cdot;x))\Bigr\|_{P_{G}}\leq c\sup_{x\in B'}\Bigl\| h_{m}(\cdot;x)-h(\cdot;x)\Bigr\|_{P_{F}}, 
\end{align*}
whenever $m\in {\mathbb N}$ and
$h_{m}(\cdot;x)-h(\cdot;x)\in P_{F}$
for all $x\in B'.$
\end{itemize}
Then the function $W(\cdot; \cdot)$ is $({\mathrm R},{\mathcal B},{\mathcal P}_{W},L)$-multi-almost periodic, resp. strongly $({\mathrm R},{\mathcal B},{\mathcal P}_{W},L)$-multi-almost periodic.
\end{thm}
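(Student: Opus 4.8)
The plan is to derive the conclusion from two inputs: the $({\mathrm R},{\mathcal B},{\mathcal P}_{F},L)$-multi-almost periodicity of $F$, which yields a subsequence of the prescribed sequence along which the translates of $F$ converge to some $F^{\ast}$ in ${\mathcal P}_{F}$, and hypothesis (i), applied along that subsequence with the auxiliary data built out of $F$ and $F^{\ast}$, which promotes this to convergence of the translates of $W({\bf t};x)=G({\bf t};F({\bf t};x))$ in ${\mathcal P}_{W}$. The candidate limit function is $W^{\ast}({\bf t};x):=G^{\ast}({\bf t};F^{\ast}({\bf t};x))$.

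Concretely, I would fix $B\in{\mathcal B}$, ${\bf b}=({\bf b}_{k})\in{\mathrm R}$ and $B'\in L(B;{\bf b})$, and first use the (strong) $({\mathrm R},{\mathcal B},{\mathcal P}_{F},L)$-multi-almost periodicity of $F$ to get a subsequence ${\bf b}'=({\bf b}_{l}')$ of ${\bf b}$ and a function $F^{\ast}:I\times X\rightarrow Y$ with $F(\cdot+{\bf b}_{l}';x)-F^{\ast}(\cdot;x)\in P_{F}$ for all $l$ and $x\in B$, $\sup_{x\in B'}\|F(\cdot+{\bf b}_{l}';x)-F^{\ast}(\cdot;x)\|_{P_{F}}\rightarrow 0$, and (in the strong case) $\sup_{x\in B'}\|F^{\ast}(\cdot-{\bf b}_{l}';x)-F(\cdot;x)\|_{P_{F}}\rightarrow 0$. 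As subsequences of members of ${\mathrm R}$ again belong to ${\mathrm R}$, I would then invoke hypothesis (i) with ${\bf b}'$ in the role of ``${\bf b}$'' and with $B'$, making the choices
\[
f:=F^{\ast},\qquad g:=F^{\ast},\qquad h:=F,\qquad (f_{l})=(h_{l}):=\bigl(F^{\ast}(\cdot-{\bf b}_{l}';\cdot)\bigr)_{l},\qquad (g_{l}):=\bigl(F(\cdot+{\bf b}_{l}';\cdot)\bigr)_{l}.
\]
This produces a further subsequence ${\bf b}''=({\bf b}_{m}'')$ of ${\bf b}'$, a function $G^{\ast}:I\times Y\rightarrow Z$ and a finite $c>0$ such that \eqref{pariz}, \eqref{pariz123} and their backward counterparts hold along ${\bf b}''$; note that the estimates coming from $F$ persist along the sub-subsequence ${\bf b}''$ as well.

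Setting $W^{\ast}({\bf t};x):=G^{\ast}({\bf t};F^{\ast}({\bf t};x))$, for $m\in{\mathbb N}$ and $x\in B'$ I would use the splitting
\begin{align*}
W(\cdot+{\bf b}_{m}'';x)-W^{\ast}(\cdot;x)
&=\Bigl[G\bigl(\cdot+{\bf b}_{m}'';F(\cdot+{\bf b}_{m}'';x)\bigr)-G\bigl(\cdot+{\bf b}_{m}'';F^{\ast}(\cdot;x)\bigr)\Bigr]\\
&\quad+\Bigl[G\bigl(\cdot+{\bf b}_{m}'';F^{\ast}(\cdot;x)\bigr)-G^{\ast}\bigl(\cdot;F^{\ast}(\cdot;x)\bigr)\Bigr].
\end{align*}
The second bracket tends to $0$ uniformly for $x\in B'$ by \eqref{pariz} (with $f=F^{\ast}$); the first, by \eqref{pariz123} with $g_{m}(\cdot;x)=F(\cdot+{\bf b}_{m}'';x)$ and $g(\cdot;x)=F^{\ast}(\cdot;x)$ — permissible since $g_{m}(\cdot;x)-g(\cdot;x)\in P_{F}$ — is dominated by $c\sup_{x\in B'}\|F(\cdot+{\bf b}_{m}'';x)-F^{\ast}(\cdot;x)\|_{P_{F}}$, which also tends to $0$. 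Using that $d_{W}$ is translation invariant (so that $\|u+v\|_{P_{W}}\leq\|u\|_{P_{W}}+\|v\|_{P_{W}}$), the whole difference lies in $P_{W}$ and $\sup_{x\in B'}\|W(\cdot+{\bf b}_{m}'';x)-W^{\ast}(\cdot;x)\|_{P_{W}}\rightarrow 0$, which is exactly what Definition \ref{eovakoapp-new} asks for. In the strong case ($I={\mathbb R}^{n}$) the backward limit is handled symmetrically, writing $W^{\ast}(\cdot-{\bf b}_{m}'';x)-W(\cdot;x)=[\,G^{\ast}(\cdot-{\bf b}_{m}'';F^{\ast}(\cdot-{\bf b}_{m}'';x))-G(\cdot;F^{\ast}(\cdot-{\bf b}_{m}'';x))\,]+[\,G(\cdot;F^{\ast}(\cdot-{\bf b}_{m}'';x))-G(\cdot;F(\cdot;x))\,]$ and bounding the two brackets by the backward form of \eqref{pariz} (with $f_{m}=F^{\ast}(\cdot-{\bf b}_{m}'';\cdot)$) and by $c\sup_{x\in B'}\|F^{\ast}(\cdot-{\bf b}_{m}'';x)-F(\cdot;x)\|_{P_{F}}$ via the backward Lipschitz-type estimate (with $h_{m}=F^{\ast}(\cdot-{\bf b}_{m}'';\cdot)$, $h=F$).

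The step I expect to be the crux is making the intentionally broad hypothesis (i) ``fire'': its whole purpose is that, for the particular auxiliary functions manufactured from $F$ and $F^{\ast}$, it supplies exactly the two ingredients needed to control $W(\cdot+{\bf b}_{m}'';x)-W^{\ast}(\cdot;x)$ — an ``outer'' convergence of the $G$-translates $G(\cdot+{\bf b}_{m}'';F^{\ast}(\cdot;x))\to G^{\ast}(\cdot;F^{\ast}(\cdot;x))$, and an ``inner'' Lipschitz-type continuity of $y\mapsto G(\cdot;y)$ measured via ${\mathcal P}_{F}$, which together make the splitting above work. The rest is bookkeeping: the two nested subsequence extractions, the faithful matching of the primed notation in (i) with that used here, and — if one insists on a single limit function $W^{\ast}$ valid for every $B'\in L(B;{\bf b})$ at once — an additional diagonalisation over $B'$ of the kind already employed in the proof of Proposition \ref{mackat}.
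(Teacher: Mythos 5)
Your proposal is correct and follows essentially the same route as the paper's proof: extract the subsequence and $F^{\ast}$ from the metrical almost periodicity of $F$, feed $f=F^{\ast}$ and the translate sequence $(F(\cdot+{\bf b}_{k_{l}};\cdot))$ into hypothesis (i), and conclude via the two-term splitting controlled by \eqref{pariz123} and \eqref{pariz}. Your explicit treatment of the strong (backward) case and the remark on diagonalising over $B'$ only spell out details the paper leaves implicit.
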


\begin{proof}
We will consider only $({\mathrm R},{\mathcal B},{\mathcal P}_{F},L)$-multi-almost periodic functions.
Let the set $B\in {\mathcal B}$ and the sequence $({\bf b}_{k}) \in {\mathrm R}$ be given, and let $B'\in L(B;{\bf b}).$
By definition, there exist a subsequence $({\bf b}_{k_{l}})$ of $({\bf b}_{k})$ and a function
$F^{\ast} : I \times X \rightarrow Y$ such that, for every $l\in {\mathbb N}$ and $x\in B,$ we have 
$F(\cdot +{\bf b}_{k_{l}};x)-F^{\ast}(\cdot;x)\in P$, and \eqref{love12345678app-new} 
holds. Since $({\bf b}_{k_{l}})\in  {\mathrm R}$, we can use assumption (i), with $f=F^{\ast} $ and the sequence of functions $(f_{l}(\cdot;\cdot)=F(\cdot+{\bf b}_{k_{l}};\cdot))$,
to find 
a subsequence $({\bf b}_{k_{l_{m}}})$ of $({\bf b}_{k_{l}})$ and a function
$G^{\ast} : I \times Y \rightarrow Z$ such that conditions \eqref{pariz} and \eqref{pariz123} hold.
It suffices to show that
\begin{align*}
\lim_{m\rightarrow +\infty}\Bigl\| G\bigl(\cdot +{\bf b}_{k_{l_{m}}};F\bigl(\cdot +{\bf b}_{k_{l_{m}}};x\bigr)\bigr)-G^{\ast}(\cdot;F^{\ast}(\cdot;x)) \Bigr\|_{P_{W}}=0.
\end{align*}
Denote ${\bf \tau_{m}}:={\bf b}_{k_{l_{m}}}$ for all $m\in {\mathbb N}$. We have ($x\in B',$ $m\in {\mathbb N}$):
\begin{align*}
&\Bigl\| G\bigl(\cdot +{\bf \tau_{m}};F\bigl(\cdot +{\bf \tau_{m}};x\bigr)\bigr)-G^{\ast}(\cdot;F^{\ast}(\cdot;x)) \Bigr\|_{P_{W}}
\\& \leq  \Bigl\| G\bigl(\cdot+{\bf \tau_{m}};F\bigl(\cdot +{\bf \tau_{m}};x\bigr)\bigr)-G(\cdot+{\bf \tau_{m}};F^{\ast}(\cdot;x)) \Bigr\|_{P_{W}}
\\& + \Bigl\| G(\cdot+{\bf \tau_{m}};F^{\ast}(\cdot;x)) -G^{\ast}(\cdot;F^{\ast}(\cdot;x)) \Bigr\|_{P_{W}}
\\& \leq c\Bigl\|F\bigl(\cdot+{\bf \tau_{m}};x\bigr)-F^{\ast}(\cdot;x) \Bigr\|_{P_{F}}+\Bigl\| G(\cdot+{\bf \tau_{m}};F^{\ast}(\cdot;x)) -G^{\ast}(\cdot;F^{\ast}(\cdot;x)) \Bigr\|_{P_{W}},
\end{align*}
which tends to zero as $m\rightarrow +\infty.$
This simply completes the proof.
\end{proof}

\begin{rem}\label{polu}
Suppose, additionally, that $F : I \times X \rightarrow Y$ is $({\mathrm R},{\mathcal B},{\mathcal P}_{F},L)$-multi-almost periodic of type $1$, resp. strongly 
$({\mathrm R},{\mathcal B},{\mathcal P}_{F},L)$-multi-almost periodic of type $1$ in the case that $I={\mathbb R}^{n}.$ If we assume, in condition (i), that we also have $G(\cdot +{\bf b}_{m}';f(\cdot))\in P_{W}$ for all $f\in P_{F},$ resp. $G(\cdot ;f(\cdot))\in P_{W}$ for all $f\in P_{F},$ then the function $W(\cdot; \cdot)$ will be $({\mathrm R},{\mathcal B},{\mathcal P}_{W},L)$-multi-almost periodic of type $1$, resp. strongly $({\mathrm R},{\mathcal B},{\mathcal P}_{W},L)$-multi-almost periodic of type $1$.
\end{rem}

\subsection{Generalization of multi-dimensional (Stepanov) almost automorphy}\label{autom}

The main aim of this subsection is to explain how we can use the approach obeyed so far to generalize the important classes of multi-dimensional (Stepanov) almost automorphic functions. Moreover, we will provide a completely new characterization of compactly almost automorphic functions by choosing the pivot space $P$ to be a weighted $L^{p}$-space. 

Concerning multi-dimensional almost automorphic functions, we will clarify the following result (for simplicity, we assume here that the corresponding collection $L(B; {\bf b})$ [$L(B; ({\bf b;{\bf x}}) )$] is equal to $\{ \{x\} : x\in B\}$ for each set $B\in {\mathcal B}$ and each sequence ${\bf b}\in {\mathrm R}$ [$({\bf b};{\bf x})\in {\mathrm R}_{X}$]; observe also that the notion from Definition \ref{eovako} and Definition \ref{eovakoap1aa} can be extended using this approach with the function $L$) :

\begin{thm}\label{automor}
Suppose that $F : {\mathbb R}^{n} \times X \rightarrow Y$ is a continuous function as well as that $p\in {\mathcal P}({\mathbb R}^{n}),$ 
$\nu : I \rightarrow (0,\infty)$ is a Lebesgue measurable function
and $L(B; {\bf b})=\{ \{x\} : x\in B\}$ [$L(B; ({\bf b;{\bf x}}) )=\{ \{x\} : x\in B\}$] for each set $B\in {\mathcal B}$ and each sequence ${\bf b}\in {\mathrm R}$ [$({\bf b};{\bf x})\in {\mathrm R}_{X}$]. Let $P:=L^{p({\bf t})}_{\nu}({\mathbb R}^{n} : Y)$ and $d(f,g):=\| f-g\|_{L^{p({\bf t})}_{\nu}({\mathbb R}^{n}: Y)}$ for all $f,\ g\in P.$
\begin{itemize}
\item[(i)]
Suppose that $p\in D_{+}({\mathbb R}^{n}),$ $\nu \in L^{p({\bf t})}({\mathbb R}^{n} :Y)$ and 
the function $F(\cdot;\cdot)$ is half-$({\mathrm R},{\mathcal B})$-multi-almost automorphic, resp. $({\mathrm R},{\mathcal B})$-multi-almost automorphic. If the function $F(\cdot;x)$ is bounded for every fixed element $x\in X,$ then the function $F(\cdot)$ is  $({\mathrm R},{\mathcal B},{\mathcal P},L)$-multi-almost periodic, resp. strongly $({\mathrm R},{\mathcal B},{\mathcal P},L)$-multi-almost periodic.
\item[(ii)] Suppose that $p\in D_{+}({\mathbb R}^{n}),$ $\nu \in L^{p({\bf t})}({\mathbb R}^{n} :Y)$ and 
the function $F(\cdot;\cdot)$ is half-$({\mathrm R}_{X},{\mathcal B})$-multi-almost automorphic, resp. $({\mathrm R}_{X},{\mathcal B})$-multi-almost automorphic.
If the function $F(\cdot;\cdot)$ is bounded, then the function $F(\cdot)$ is  $({\mathrm R}_{X},{\mathcal B},{\mathcal P},L)$-multi-almost periodic, resp. strongly $({\mathrm R}_{X},{\mathcal B},{\mathcal P},L)$-multi-almost periodic.
\item[(iii)] If the function $F(\cdot;x)$ is uniformly continuous for every fixed element $x\in X,$ and the function $F(\cdot)$ is $({\mathrm R},{\mathcal B},{\mathcal P},L)$-multi-almost periodic, resp. strongly $({\mathrm R},{\mathcal B},{\mathcal P},L)$-multi-almost periodic, then the function $F(\cdot;\cdot)$ is half-$({\mathrm R},{\mathcal B})$-multi-almost automorphic, resp. $({\mathrm R},{\mathcal B})$-multi-almost automorphic.
\item[(iv)] If the function $F(\cdot;\cdot)$ is uniformly continuous and  $({\mathrm R}_{X},{\mathcal B},{\mathcal P},L)$-multi-almost periodic, resp. strongly $({\mathrm R}_{X},{\mathcal B},{\mathcal P},L)$-multi-almost periodic, then the function $F(\cdot;\cdot)$ is half-$({\mathrm R}_{X},{\mathcal B})$-multi-almost automorphic, resp. $({\mathrm R}_{X},{\mathcal B})$-multi-almost automorphic.
\end{itemize}
\end{thm}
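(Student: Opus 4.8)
The plan is to establish (i)--(ii) and (iii)--(iv) through two opposite mechanisms: for (i)--(ii) one upgrades pointwise convergence (supplied by almost automorphy) to convergence in the weighted variable Lebesgue norm by the dominated convergence theorem, while for (iii)--(iv) one upgrades convergence in that norm back to locally uniform convergence by exploiting the equicontinuity that uniform continuity of $F$ provides.

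\textbf{Parts (i) and (ii).} Fix $B\in{\mathcal B}$ and a sequence in ${\mathrm R}$, resp.\ in ${\mathrm R}_{X}$; half-almost automorphy produces a subsequence $({\bf b}_{k_{l}})$ and a function $F^{\ast}$ with $F({\bf t}+{\bf b}_{k_{l}};x)\to F^{\ast}({\bf t};x)$ pointwise (and, in the ``resp.'' case, also the reverse limit). Fix $x\in B$ and put $M_{x}:=\sup_{{\bf t}}\|F({\bf t};x)\|_{Y}$ (for (ii), $M:=\sup_{{\bf t},x}\|F({\bf t};x)\|_{Y}$); then $F(\cdot+{\bf b}_{k_{l}};x)$ and its pointwise limit $F^{\ast}(\cdot;x)$ are both bounded by $M_{x}$, so the $Y$-valued functions $h_{l}:=(F(\cdot+{\bf b}_{k_{l}};x)-F^{\ast}(\cdot;x))\nu(\cdot)$ satisfy $\|h_{l}({\bf t})\|_{Y}\le 2M_{x}\nu({\bf t})$, where $2M_{x}\nu\in L^{p({\bf t})}({\mathbb R}^{n})=E^{p({\bf t})}({\mathbb R}^{n})$ because $\nu\in L^{p({\bf t})}({\mathbb R}^{n})$ and $p\in D_{+}({\mathbb R}^{n})$. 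Lemma \ref{aux}(iii) then gives $F(\cdot+{\bf b}_{k_{l}};x)-F^{\ast}(\cdot;x)\in P$, and Lemma \ref{aux}(iv) (dominated convergence), applied to $h_{l}\to 0$ pointwise, gives $\|F(\cdot+{\bf b}_{k_{l}};x)-F^{\ast}(\cdot;x)\|_{P}\to 0$; since $L(B;\cdot)=\{\{x\}:x\in B\}$, this is exactly the content of Definition \ref{eovakoapp-new}, resp.\ \ref{eovakoapp1-new}. The strong (``resp.'') conclusion follows by running the same estimate for $F^{\ast}(\cdot-{\bf b}_{k_{l}};x)-F(\cdot;x)$.

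\textbf{Parts (iii) and (iv).} Fix $B$ and a sequence, and let $({\bf b}_{k_{l}})$, $F^{\ast}$ come from $({\mathrm R},{\mathcal B},{\mathcal P},L)$-multi-almost periodicity, so $\|F(\cdot+{\bf b}_{k_{l}};x)-F^{\ast}(\cdot;x)\|_{P}\to 0$ for each $x\in B$. The crux is the claim: \emph{a sequence of $Y$-valued functions on ${\mathbb R}^{n}$ sharing one modulus of continuity and converging in $L^{p({\bf t})}_{\nu}({\mathbb R}^{n}:Y)$ converges uniformly on compacta to a continuous function equal a.e.\ to the $L^{p({\bf t})}_{\nu}$-limit.} To prove it, restrict to a ball $B_{R}$: Lemma \ref{aux}(ii) with exponent $1$ gives convergence in $L^{1}(B_{R}:Y)$, hence a.e.\ convergence along a subsequence; diagonalizing over $R\in{\mathbb N}$ yields one subsequence along which $(F(\cdot+{\bf b}_{k_{l}};x)-F^{\ast}(\cdot;x))\nu\to 0$, hence $F(\cdot+{\bf b}_{k_{l}};x)\to F^{\ast}(\cdot;x)$ a.e.\ since $\nu>0$; the common modulus then turns a.e.\ convergence on a dense set into everywhere convergence to a function inheriting that modulus, so continuous, and a finite-subcover argument makes the convergence uniform on compacta. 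Applying the claim to every subsequence of $({\bf b}_{k_{l}})$, and using that two continuous functions equal a.e.\ on ${\mathbb R}^{n}$ coincide, one concludes that the whole subsequence $F(\cdot+{\bf b}_{k_{l}};x)$ converges locally uniformly to a single continuous $\widetilde F(\cdot;x)$ with $\widetilde F(\cdot;x)=F^{\ast}(\cdot;x)$ a.e.; this is the forward limit of (half-)almost automorphy. In the strong case one also has $\|F^{\ast}(\cdot-{\bf b}_{k_{l}};x)-F(\cdot;x)\|_{P}\to 0$, and since $F(\cdot;x)$ is continuous while the translates ${\bf t}\mapsto\widetilde F({\bf t}-{\bf b}_{k_{l}};x)$ (which agree a.e.\ with the corresponding translates of $F^{\ast}$) are equicontinuous, the same claim gives $\widetilde F(\cdot-{\bf b}_{k_{l}};x)\to F(\cdot;x)$ locally uniformly, i.e.\ the second almost-automorphy limit; hence $F$ is $({\mathrm R},{\mathcal B})$-multi-almost automorphic. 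Part (iv) is entirely analogous, with the translations acting on both coordinates of ${\mathbb R}^{n}\times X$ and the common modulus of continuity furnished by the uniform continuity of $F$.

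\textbf{Main obstacle.} The delicate point is the passage in (iii)--(iv) from convergence in the weighted variable-exponent norm to locally uniform convergence: that norm convergence yields a.e.\ convergence only along a further subsequence and only after dividing out the weight $\nu$, so the argument must (a) embed into $L^{1}$ on balls and diagonalize to recover a.e.\ convergence on all of ${\mathbb R}^{n}$, (b) invoke equicontinuity to promote ``a.e.'' to ``everywhere'' and to force continuity of the limit, and (c) use the subsequence-of-subsequence principle so that the \emph{originally chosen} subsequence converges, which is what allows a single function to serve as $F^{\ast}$ in Definition \ref{eovako}, resp.\ \ref{eovakoap1aa}.
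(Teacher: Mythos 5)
Your proposal is correct and follows essentially the same route as the paper: parts (i)--(ii) via the domination $\|h_l\|\leq 2M_x\nu$ and Lemma \ref{aux}(iv), and parts (iii)--(iv) via the embedding $L^{p({\bf t})}(K)\hookrightarrow L^{1}(K)$ of Lemma \ref{aux}(ii) followed by the equicontinuity ($3$-$\epsilon$) upgrade from a.e.\ to everywhere convergence. Your treatment is in fact slightly more careful than the paper's, since you explicitly handle the passage to a further subsequence needed to extract a.e.\ convergence from $L^{1}$-convergence (via diagonalization over balls and the subsequence-of-subsequences principle), a point the paper's proof leaves implicit.
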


\begin{proof}
We will prove only (ii) and (iv). Let $B\in {\mathcal B},$ $x\in B$ and $(({\bf b;{\bf x}})_{k}=((b_{k}^{1},b_{k}^{2},\cdot \cdot\cdot ,b_{k}^{n});x_{k})_{k}) \in {\mathrm R}_{\mathrm X}$
be given. We know that
there exist a subsequence $(({\bf b;{\bf x}})_{k_{l}}=((b_{k_{l}}^{1},b_{k_{l}}^{2},\cdot \cdot\cdot , b_{k_{l}}^{n});x_{k_{l}})_{k_{l}})$ of $(({\bf b};{\bf x})_{k})$ and a function
$F^{\ast} : I \times X \rightarrow Y$ such that \eqref{gospode} holds, resp. \eqref{gospode} and \eqref{gospodine} hold true. This immediately implies that the function $F^{\ast}(\cdot;\cdot)$ is bounded; since we have assumed that $p\in D_{+}({\mathbb R}^{n})$ and $\nu \in L^{p({\bf t})}({\mathbb R}^{n} :Y),$ the dominated convergence theorem (see Lemma \ref{aux}(iv)) implies that
\begin{align}\label{rajke}
\lim_{l\rightarrow +\infty}\Bigl\| \bigl[ F({\bf t}+b_{k_{l}};x+x_{k_{l}}) -F^{\ast}({\bf t};x)\bigr] \cdot \nu({\bf t}) \Bigr\|_{L^{p({\bf t})}({\mathbb R}^{n} : Y)}=0,
\end{align}
resp. \eqref{rajke} and
\begin{align*}
\lim_{l\rightarrow +\infty}\Bigl\| \bigl[ F^{\ast}({\bf t}-b_{k_{l}};x-x_{k_{l}}) -F({\bf t};x)\bigr] \cdot \nu({\bf t}) \Bigr\|_{L^{p({\bf t})}({\mathbb R}^{n} : Y)}=0
\end{align*}
are true.
Therefore, the function $F(\cdot;\cdot)$ is  $({\mathrm R}_{X},{\mathcal B},{\mathcal P},L)$-multi-almost periodic, resp. strongly $({\mathrm R}_{X},{\mathcal B},{\mathcal P},L)$-multi-almost periodic. To prove (iv), fix
$B\in {\mathcal B}$ and $(({\bf b;{\bf x}})_{k}=((b_{k}^{1},b_{k}^{2},\cdot \cdot\cdot ,b_{k}^{n});x_{k})_{k}) \in {\mathrm R}_{\mathrm X}.$ We know that there exist a subsequence $(({\bf b;{\bf x}})_{k_{l}}=((b_{k_{l}}^{1},b_{k_{l}}^{2},\cdot \cdot\cdot , b_{k_{l}}^{n});x_{k_{l}})_{k_{l}})$ of $(({\bf b};{\bf x})_{k})$ and a function
$F^{\ast} : I \times X \rightarrow Y$ such that, for every $l\in {\mathbb N}$ and $x\in B,$ we have $F(\cdot +(b_{k_{l}}^{1},\cdot \cdot\cdot, b_{k_{l}}^{n});x+x_{k_{l}})-F^{\ast}(\cdot;x)\in P$ and \eqref{love12345678ap1} holds
pointwisely for all $x\in B$,
resp. $F(\cdot +(b_{k_{l}}^{1},\cdot \cdot\cdot, b_{k_{l}}^{n});x+x_{k_{l}})-F^{\ast}(\cdot;x)\in P$, $F^{\ast}(\cdot -(b_{k_{l}}^{1},\cdot \cdot\cdot, b_{k_{l}}^{n});x-x_{k_{l}})-F(\cdot;x)\in P,$ \eqref{love12345678ap1} holds, and \eqref{love12345678ap1}-\eqref{love12345678ap1s} hold pointwisely for all $x\in B$.
Therefore,
\begin{align}\label{rajkec}
\lim_{l\rightarrow +\infty}\Bigl\| \bigl[ F({\bf t}+b_{k_{l}};x+x_{k_{l}}) -F^{\ast}({\bf t};x)\bigr] \cdot \nu({\bf t}) \Bigr\|_{L^{p({\bf t})}(K_{T} : Y)}=0,
\end{align}
resp. \eqref{rajkec} and
\begin{align*}
\lim_{l\rightarrow +\infty}\Bigl\| \bigl[ F^{\ast}({\bf t}-b_{k_{l}};x-x_{k_{l}}) -F({\bf t};x)\bigr] \cdot \nu({\bf t}) \Bigr\|_{L^{p({\bf t})}(K_{T} : Y)}=0
\end{align*}
are true for all $T\in {\mathbb N}.$ Then Lemma \ref{aux}(ii) implies that the above equalities hold with the function $p({\bf t})$ replaced with the constant function $1$ therein, so that there exists 
a set $N\subseteq {\mathbb R}^{n}$ of Lebesgue measure zero such that \eqref{gospode}, resp. \eqref{gospode} and \eqref{gospodine},
hold pointwisely for all $x\in B$ and ${\bf t}\in {\mathbb R}^{n}\setminus N.$ Using the uniform continuity of function $F(\cdot;\cdot)$ and the well known $3-\epsilon$ argument, we can simply show that the function $F^{\ast}(\cdot;\cdot)$ is uniformly continuous and the limit equality \eqref{gospode}, resp. the limit equalities  \eqref{gospode} and \eqref{gospodine}, hold pointwisely for all $x\in B$ and ${\bf t}\in {\mathbb R}^{n},$ since $( F({\bf t}+b_{k_{l}};x+x_{k_{l}}) )$ and $(F^{\ast}({\bf t}-b_{k_{l}};x-x_{k_{l}})) $ are Cauchy sequences in $Y$.
\end{proof}

\begin{cor}\label{mace-mace}
Suppose that the function $F: {\mathbb R}^{n} \rightarrow Y$ is measurable, $p\in D_{+}({\mathbb R}^{n}),$ $\nu \in L^{p({\bf t})}({\mathbb R}^{n} :Y),$
${\mathrm R}$ is the collection of all sequences in ${\mathbb R}^{n}$, ${\mathcal P}$ and $L$ are defined as above.
Then the following holds:
\begin{itemize}
\item[(i)] If $F(\cdot)$ is almost automorphic, then $F(\cdot)$ is strongly $({\mathrm R},{\mathcal P},L)$-multi-almost periodic.
\item[(ii)] If $F(\cdot)$ is
uniformly continuous, then $F(\cdot)$ is compactly almost automorphic if and only if $F(\cdot)$ is strongly $({\mathrm R},{\mathcal P},L)$-multi-almost periodic.
\end{itemize}
\end{cor}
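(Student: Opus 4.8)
The plan is to deduce both parts from Theorem~\ref{automor} together with two classical facts recalled in the introduction: an almost automorphic function is continuous and bounded, and an almost automorphic function is compactly almost automorphic if and only if it is uniformly continuous. Here $X=\{0\}$, so the collection ${\mathcal B}$ is trivial and may be suppressed, $L(B;{\bf b})=\{\{0\}\}$, and, since ${\mathrm R}$ is the collection of \emph{all} sequences in ${\mathbb R}^{n}$, being $({\mathrm R},{\mathcal B})$-multi-almost automorphic is nothing but being almost automorphic in the sense of \eqref{first-equ1}. Observe also that in parts (i) and (ii) the respective hypotheses already force $F(\cdot)$ to be continuous, so the bare measurability assumption of the Corollary is only there to make the phrase ``strongly $({\mathrm R},{\mathcal P},L)$-multi-almost periodic'' meaningful in general.

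For (i), one would argue as follows: if $F(\cdot)$ is almost automorphic, then it is continuous and bounded, hence $F(\cdot;x)$ is bounded at the unique point $x=0$ and $F(\cdot)$ is $({\mathrm R},{\mathcal B})$-multi-almost automorphic; applying the ``resp.'' assertion of Theorem~\ref{automor}(i), with $P:=L^{p({\bf t})}_{\nu}({\mathbb R}^{n}:Y)$ and $d$ as specified, one obtains at once that $F(\cdot)$ is strongly $({\mathrm R},{\mathcal P},L)$-multi-almost periodic. For the ``only if'' part of (ii) there is nothing new to do: a compactly almost automorphic function is in particular almost automorphic and bounded, so part (i) already delivers that $F(\cdot)$ is strongly $({\mathrm R},{\mathcal P},L)$-multi-almost periodic, and uniform continuity is not even needed.

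It remains to handle the ``if'' direction of (ii). Here I would assume $F(\cdot)$ uniformly continuous and strongly $({\mathrm R},{\mathcal P},L)$-multi-almost periodic; uniform continuity gives continuity, so the ``resp.'' assertion of Theorem~\ref{automor}(iii) applies and shows that $F(\cdot)$ is $({\mathrm R},{\mathcal B})$-multi-almost automorphic, that is, almost automorphic, whence, being also uniformly continuous, $F(\cdot)$ is compactly almost automorphic by the equivalence recalled above. No serious obstacle is anticipated, since all the analytic substance has been absorbed into Theorem~\ref{automor}; the only points that will require a word of care are the identification of $({\mathrm R},{\mathcal B})$-multi-almost automorphy with classical almost automorphy when ${\mathrm R}$ is the full collection of sequences and $X=\{0\}$, and the use of ``almost automorphic $+$ uniformly continuous $\Longleftrightarrow$ compactly almost automorphic'' in the last step. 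Were one to unfold Theorem~\ref{automor} rather than cite it, the crux would be the dominated convergence argument of Lemma~\ref{aux}(iv), which is precisely where $p\in D_{+}({\mathbb R}^{n})$ and $\nu\in L^{p({\bf t})}({\mathbb R}^{n}:Y)$ are used to upgrade pointwise convergence of the translates $F(\cdot+{\bf b}_{k_{l}})$ to convergence in the weighted variable-exponent norm.
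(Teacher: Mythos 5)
Your proposal is correct and follows exactly the route the paper intends: the corollary is stated without proof as an immediate consequence of Theorem~\ref{automor} (parts (i) and (iii), specialized to $X=\{0\}$ and ${\mathrm R}$ the collection of all sequences) combined with the classical facts that almost automorphic functions are continuous and bounded and that almost automorphy plus uniform continuity is equivalent to compact almost automorphy. Your identification of $({\mathrm R},{\mathcal B})$-multi-almost automorphy with classical almost automorphy in this setting, and your remark on where $p\in D_{+}({\mathbb R}^{n})$ and $\nu\in L^{p({\bf t})}({\mathbb R}^{n}:Y)$ enter, are both accurate.
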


In the sequel, if $L$ is a function as in the previous two statements, we will also simply say that $f(\cdot)$ is (strongly) $({\mathrm R},{\mathcal P})$-multi-almost periodic [(strongly) $({\mathrm R}_{X},{\mathcal P})$-multi-almost periodic]; it is not clear whether the converse in (i) holds true. Further on,
let $p_{1}\in D_{+}({\mathbb R}^{n}),$ $\nu \in L^{p_{1}({\bf t})}({\mathbb R}^{n} :Y),$ ${\mathcal P}$ and $L$ be defined as above. It is worth noting that we have recently analyzed various classes of multi-dimensional Stepanov almost automorphic functions in \cite[Section 8.2]{nova-selected} as well as that the class of essentially bounded Stepanov $(\Omega,p({\bf u}))$-$({\mathrm R},{\mathcal B})$-multi-almost automorphic functions and the class of essentially bounded Stepanov $(\Omega,p({\bf u}))$-$({\mathrm R}_{X},{\mathcal B})$-multi-almost automorphic functions can be viewed as special subclasses of $({\mathrm R},{\mathcal B},{\mathcal P}_{1},L)$-multi-almost periodic functions, resp. $({\mathrm R}_{X},{\mathcal B},{\mathcal P}_{1},L)$-multi-almost periodic functions; here, $\Omega$ is any compact subset of ${\mathbb R}^{n}$ with positive Lebesgue measure and $p\in D_{+}(\Omega)$. Strictly speaking, if the Bochner transform $\hat{F} : {\mathbb R}^{n} \times X \rightarrow L^{p({\bf u})}({\mathbb R}^{n} : Y)$, defined by $[\hat{F}({\bf t};x)]({\bf u}):=F({\bf t}+{\bf u};x),$ ${\bf t}\in {\mathbb R}^{n},$ $x\in X,$ ${\bf u}\in \Omega,$
is $({\mathrm R},{\mathcal B})$-multi-almost automorphic, resp. $({\mathrm R}_{X},{\mathcal B})$-multi-almost automorphic, and the function
$F : {\mathbb R}^{n} \times X \rightarrow Y$ satisfies that the function
$F(\cdot;x)$ is essentially bounded for every fixed element $x\in X$, resp. the function $F(\cdot;\cdot)$
is essentially bounded,
then, for every $B\in {\mathcal B}$ and for every sequence $({\bf b})_{k} \in {\mathrm R},$ 
resp.
for every $B\in {\mathcal B}$ and for every sequence $(({\bf b;{\bf x}})_{k}) \in {\mathrm R}_{\mathrm X},$ there exist a subsequence $({\bf b})_{k_{l}} \in {\mathrm R}$ of $({\bf b})_{k} \in {\mathrm R},$ resp. $(({\bf b;{\bf x}})_{k_{l}})$ of $(({\bf b};{\bf x})_{k})$ and a function
$F^{\ast} : {\mathbb R}^{n} \times X \rightarrow L^{p({\bf u})}({\mathbb R}^{n} : Y)$ such that
\begin{align}\label{gospode0123}
\lim_{l\rightarrow +\infty}\Bigl\|F\bigl({\bf t} +{\bf u}+(b_{k_{l}}^{1},\cdot \cdot\cdot, b_{k_{l}}^{n});x\bigr)-\bigl[F^{\ast}({\bf t};x)\bigr]({\bf u})\Bigr\|_{L^{p({\bf u})}({\mathbb R}^{n} : Y)}=0
\end{align}
and
\begin{align}\label{gospodine0123}
\lim_{l\rightarrow +\infty}
\Bigl\|\bigl[F^{\ast}({\bf t}-{\bf u}+(b_{k_{l}}^{1},\cdot \cdot\cdot, b_{k_{l}}^{n});x)\bigr]({\bf u})-F({\bf t}+{\bf u};x)\Bigr\|_{L^{p({\bf u})}({\mathbb R}^{n} : Y)}=0,
\end{align}
resp.
\begin{align}\label{gospode123}
\lim_{l\rightarrow +\infty}\Bigl\|F\bigl({\bf t} +{\bf u}+(b_{k_{l}}^{1},\cdot \cdot\cdot, b_{k_{l}}^{n});x+x_{k_{l}}\bigr)-\bigl[F^{\ast}({\bf t};x)\bigr]({\bf u})\Bigr\|_{L^{p({\bf u})}({\mathbb R}^{n} : Y)}=0
\end{align}
and
\begin{align}\label{gospodine123}
\lim_{l\rightarrow +\infty}
\Bigl\|\bigl[F^{\ast}({\bf t}-{\bf u}+(b_{k_{l}}^{1},\cdot \cdot\cdot, b_{k_{l}}^{n});x-x_{k_{l}})\bigr]({\bf u})-F({\bf t}+{\bf u};x)\Bigr\|_{L^{p({\bf u})}({\mathbb R}^{n} : Y)}=0,
\end{align}
hold pointwisely for all $x\in B$ and ${\bf t}\in {\mathbb R}^{n}.$ The convergence in \eqref{gospode0123} and \eqref{gospodine0123}, resp. \eqref{gospode123} and \eqref{gospodine123}, implies the pointwise convergence for all ${\bf u}\in \Omega \setminus N_{{\bf t}},$ where $m(N_{{\bf t}})=0$. We can therefore define the limit function $F^{\ast} : {\mathbb R}^{n} \times X \rightarrow Y$ by
$F^{\ast}({\bf t};x):=[\hat{F}({\bf t}-{\bf u};x)]({\bf u})$, ${\bf t}\in {\mathbb R}^{n},$ $x\in X$, where ${\bf u} \in \Omega \setminus N_{{\bf t}}$ for all ${\bf t}\in {\mathbb R}^{n}.$ The function $F^{\ast}(\cdot;x)$ is essentially bounded for every fixed element $x\in X,$ resp. the function $F^{\ast}(\cdot;\cdot)$ is essentially bounded; using the dominated convergence theorem, we get that the function $F(\cdot;\cdot)$ is 
$({\mathrm R},{\mathcal B},{\mathcal P}_{1},L)$-multi-almost periodic, resp. $({\mathrm R}_{X},{\mathcal B},{\mathcal P}_{1},L)$-multi-almost periodic. Therefore, we have proved the following (see also Subsection \ref{periodic123} below for another approach obeyed with regards to the multi-dimensional Stepanov almost periodic functions):

\begin{thm}\label{step}
Suppose that the function $F : {\mathbb R}^{n} \times X \rightarrow Y$ satisfies that the function $F(\cdot;x)$ is essentially bounded for every fixed element $x\in X,$ resp. the function $F(\cdot;\cdot)$ is essentially bounded. If the function $F(\cdot;\cdot)$ is Stepanov $(\Omega,p({\bf u}))$-$({\mathrm R},{\mathcal B})$-multi-almost automorphic, resp. Stepanov $(\Omega,p({\bf u}))$-$({\mathrm R}_{X},{\mathcal B})$-multi-almost automorphic, then the function $F(\cdot;\cdot)$ is $({\mathrm R},{\mathcal B},{\mathcal P}_{1},L)$-multi-almost periodic, resp. $({\mathrm R}_{X},{\mathcal B},{\mathcal P}_{1},L)$-multi-almost periodic, where $p_{1}\in D_{+}({\mathbb R}^{n}),$ $\nu \in L^{p_{1}({\bf t})}({\mathbb R}^{n} :Y),$ ${\mathcal P}$ and $L$ being defined as above.
\end{thm}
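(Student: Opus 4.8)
The plan is to re-express the Stepanov almost automorphy of $F$ as ordinary $(\mathrm{R}_{X},\mathcal{B})$-multi-almost automorphy of its Bochner transform, to reconstruct a $Y$-valued limit function from the resulting $L^{p(\mathbf{u})}$-valued limit, and then to push the convergence into $\mathcal{P}_{1}$ by dominated convergence. Concretely, fix the compact $\Omega\subseteq\mathbb{R}^{n}$ with $m(\Omega)>0$ and $p\in D_{+}(\Omega)$; by definition $F(\cdot;\cdot)$ is Stepanov $(\Omega,p(\mathbf{u}))$-$(\mathrm{R}_{X},\mathcal{B})$-multi-almost automorphic precisely when the Bochner transform $\hat{F}:\mathbb{R}^{n}\times X\to L^{p(\mathbf{u})}(\Omega:Y)$, $[\hat{F}(\mathbf{t};x)](\mathbf{u})=F(\mathbf{t}+\mathbf{u};x)$, is $(\mathrm{R}_{X},\mathcal{B})$-multi-almost automorphic in the sense of Definition \ref{eovakoap1aa}. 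So, given $B\in\mathcal{B}$ and $((\mathbf{b};\mathbf{x})_{k})\in\mathrm{R}_{X}$, I would first extract from that definition a subsequence $((\mathbf{b};\mathbf{x})_{k_{l}})$ and an $L^{p(\mathbf{u})}(\Omega:Y)$-valued limit $F^{\ast}$ for which \eqref{gospode123} holds pointwisely in $\mathbf{t},x$ (the limit \eqref{gospodine123} is also available, and would only be needed if one wanted the ``strongly'' conclusion); the $(\mathrm{R},\mathcal{B})$-case is the specialisation $x_{k}\equiv 0$ via \eqref{gospode0123}.

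The first real step is to manufacture, out of this $L^{p(\mathbf{u})}$-valued $F^{\ast}$, a genuine $Y$-valued limit function $\widetilde{F}^{\ast}:\mathbb{R}^{n}\times X\to Y$. The point is that every Bochner transform obeys the compatibility identity $[\hat{F}(\mathbf{s};x)](\mathbf{u})=[\hat{F}(\mathbf{s}';x)](\mathbf{u}')$ whenever $\mathbf{s}+\mathbf{u}=\mathbf{s}'+\mathbf{u}'$ (both sides equal $F(\mathbf{s}+\mathbf{u};x)$), and for each fixed $l$ this identity already holds for the translated transform $\hat{F}(\cdot+\mathbf{b}_{k_{l}};\cdot+x_{k_{l}})$; letting $l\to\infty$ and using the $L^{p(\mathbf{u})}$-convergence in \eqref{gospode123}, the same identity survives (a.e.\ in $\mathbf{u}$) for $F^{\ast}$. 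Hence, by a routine Fubini-type argument, $\widetilde{F}^{\ast}(\mathbf{t};x):=[F^{\ast}(\mathbf{t}-\mathbf{u};x)](\mathbf{u})$ is independent of $\mathbf{u}\in\Omega\setminus N_{\mathbf{t}}$ for a Lebesgue-null $N_{\mathbf{t}}$, and thus defines $\widetilde{F}^{\ast}$; in effect $F^{\ast}=\widehat{\widetilde{F}^{\ast}}$ a.e. Writing, for $\mathbf{s}$ in a fixed translate $\mathbf{t}_{0}+\Omega$, $F(\mathbf{s}+\mathbf{b}_{k_{l}};x+x_{k_{l}})=[\hat{F}(\mathbf{t}_{0}+\mathbf{b}_{k_{l}};x+x_{k_{l}})](\mathbf{s}-\mathbf{t}_{0})$ then shows that $F(\cdot+\mathbf{b}_{k_{l}};x+x_{k_{l}})\to\widetilde{F}^{\ast}(\cdot;x)$ in $L^{p(\mathbf{u})}$ over each such translate; covering $\mathbb{R}^{n}$ by countably many translates of $\Omega$ (possible since $m(\Omega)>0$) and diagonalising yields a further subsequence along which the convergence is a.e.\ on all of $\mathbb{R}^{n}$. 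Finally, essential boundedness of $F$ passes to $F^{\ast}$ and so to $\widetilde{F}^{\ast}$: $\|\widetilde{F}^{\ast}(\cdot;\cdot)\|_{Y}\le M$ a.e.\ for some $M>0$.

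The last step is the dominated-convergence upgrade of Lemma \ref{aux}(iv) inside $\mathcal{P}_{1}=L^{p_{1}(\mathbf{t})}_{\nu}(\mathbb{R}^{n}:Y)$. Since $p_{1}\in D_{+}(\mathbb{R}^{n})$ we have $L^{p_{1}(\mathbf{t})}(\mathbb{R}^{n}:Y)=E^{p_{1}(\mathbf{t})}(\mathbb{R}^{n}:Y)$, and since $\nu\in L^{p_{1}(\mathbf{t})}(\mathbb{R}^{n}:Y)$ the function $2M\nu(\cdot)$ lies in $E^{p_{1}(\mathbf{t})}(\mathbb{R}^{n})$ and dominates $\|F(\mathbf{t}+\mathbf{b}_{k_{l}};x+x_{k_{l}})-\widetilde{F}^{\ast}(\mathbf{t};x)\|_{Y}\nu(\mathbf{t})$ a.e.; in particular $F(\cdot+\mathbf{b}_{k_{l}};x+x_{k_{l}})-\widetilde{F}^{\ast}(\cdot;x)\in P$ for every $l$. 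Lemma \ref{aux}(iv), applied along the a.e.-convergent (sub-)subsequence produced above, then gives $\|F(\cdot+\mathbf{b}_{k_{l}};x+x_{k_{l}})-\widetilde{F}^{\ast}(\cdot;x)\|_{\mathcal{P}_{1}}\to 0$; since \emph{every} subsequence admits such a sub-subsequence with the same limit $\widetilde{F}^{\ast}$, this convergence in fact holds along the whole of $((\mathbf{b};\mathbf{x})_{k_{l}})$, which is exactly condition \eqref{love12345678ap1} of Definition \ref{eovakoapp1-new}. Because here $L(B;(\mathbf{b};\mathbf{x}))=\{\{x\}:x\in B\}$, the supremum over $B'\in L(B;(\mathbf{b};\mathbf{x}))$ in that definition collapses to a single point, so pointwise-in-$x$ convergence already suffices, and $F(\cdot;\cdot)$ is $(\mathrm{R}_{X},\mathcal{B},\mathcal{P}_{1},L)$-multi-almost periodic; the $(\mathrm{R},\mathcal{B})$-statement is identical. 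I expect the genuine obstacle to lie in the middle paragraph: verifying that the $L^{p(\mathbf{u})}$-valued automorphic limit is (a.e.) itself a Bochner transform, so that $\widetilde{F}^{\ast}$ is well defined, together with the careful passage from local $L^{p(\mathbf{u})}$-convergence to a.e.\ convergence on $\mathbb{R}^{n}$; once those are settled, the hypotheses $\nu\in L^{p_{1}(\mathbf{t})}$ and $p_{1}\in D_{+}$ make everything else routine.
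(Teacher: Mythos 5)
Your proposal is correct and follows essentially the same route as the paper: pass to the Bochner transform, extract the $L^{p(\mathbf{u})}$-valued automorphic limit, recover a $Y$-valued limit function by evaluating $[F^{\ast}(\mathbf{t}-\mathbf{u};x)](\mathbf{u})$ off a null set, note that essential boundedness passes to the limit, and conclude via the dominated convergence theorem (Lemma \ref{aux}(iv)) with $\nu$ supplying the dominating function. You are in fact somewhat more careful than the paper on two points it treats tersely — the well-definedness of the reconstructed limit and the subsequence/sub-subsequence device needed because $L^{p(\mathbf{u})}$-convergence only yields a.e.\ convergence along a subsequence — but these are refinements of the same argument, not a different one.
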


We close this section by providing several observations in the case that $\nu : I \rightarrow (0,\infty)$ satisfies that $1/\nu(\cdot)$ is locally bounded, $P:=C_{0,\nu}(I : Y)$ and $d(f,g):=\|f-g\|_{C_{0,\nu}(I : Y)}$ for all $f,\ g\in P:$

\begin{rem}\label{obeserve}
\begin{itemize}
\item[(i)] Suppose that there exists a positive real number $c>0$ such that $\nu ({\bf t})\geq c$ for all ${\bf t}\in I,$ and $B\in L(B; {\bf b})$ for all $B\in {\mathcal B}$ and ${\bf b}\in {\mathrm R}$ [$B\in L(B;({\bf b};{\bf x}))$ for all $B\in {\mathcal B}$ and $({\bf b};{\bf x}) \in {\mathrm R}_{X}$]. Then any $({\mathrm R},{\mathcal B},{\mathcal P},L)$-multi-almost periodic function [$({\mathrm R}_{X},{\mathcal B},{\mathcal P},L)$-multi-almost periodic function] is  $({\mathrm R},{\mathcal B})$-multi-almost periodic [$({\mathrm R}_{X},{\mathcal B})$-multi-almost periodic].
\item[(ii)] Suppose that there exists a positive real number $c>0$ such that $\nu ({\bf t})\leq c$ for all ${\bf t}\in I,$ and $B\in L(B; {\bf b})$ for all $B\in {\mathcal B}$ and ${\bf b}\in {\mathrm R}$ [$B\in L(B;({\bf b};{\bf x}))$ for all $B\in {\mathcal B}$ and $({\bf b};{\bf x}) \in {\mathrm R}_{X}$]. Then any  $({\mathrm R},{\mathcal B})$-multi-almost periodic function [$({\mathrm R}_{X},{\mathcal B})$-multi-almost periodic function] is
$({\mathrm R},{\mathcal P},L)$-multi-almost periodic [$({\mathrm R}_{X},{\mathcal P},L)$-multi-almost periodic].
\item[(iii)] Suppose that $I={\mathbb R}^{n}.$ Then any $({\mathrm R},{\mathcal P},L)$-multi-almost periodic function [$({\mathrm R}_{X},{\mathcal P},L)$-multi-almost periodic] function is compactly $({\mathrm R},{\mathcal B})$-multi-almost automorphic [compactly $({\mathrm R}_{X},{\mathcal B})$-multi-almost automorphic]. This almost immediately follows from the corresponding definitions and the fact that the function $1/\nu(\cdot)$ is bounded on any compact of ${\mathbb R}^{n}.$
\end{itemize}
\end{rem}

\section{Bohr $({\mathcal B},I',\rho, {\mathcal P})$-Multi-almost periodic type functions}\label{maremare1}

Arguing as in Remark \ref{net}(ii) above, we may simply conclude that 
the notion of Bohr $({\mathcal B},I',\rho)$-almost periodicity
and the notion of $({\mathcal B},I',\rho)$-uniform recurrence
are very special cases of the following notion:

\begin{defn}\label{nafaks123456789012345}
Suppose that $\emptyset  \neq I' \subseteq {\mathbb R}^{n},$ $\emptyset  \neq I \subseteq {\mathbb R}^{n},$ $F : I \times X \rightarrow Y$ is a given function, $\rho$ is a binary relation on $Y,$ and $I +I' \subseteq I.$ Then we say that:
\begin{itemize}
\item[(i)]\index{function!Bohr $({\mathcal B},I',\rho,{\mathcal P})$-almost periodic}
$F(\cdot;\cdot)$ is Bohr $({\mathcal B},I',\rho,{\mathcal P})$-almost periodic if and only if for every $B\in {\mathcal B}$ and $\epsilon>0$
there exists $l>0$ such that for each ${\bf t}_{0} \in I'$ there exists ${\bf \tau} \in B({\bf t}_{0},l) \cap I'$ such that, for every ${\bf t}\in I$ and $x\in B,$ there exists an element $y_{{\bf t};x}\in \rho (F({\bf t};x))$ such that $F(\cdot+{\bf \tau};x)-y_{\cdot;x} \in P$ for all $x\in B$, and
\begin{align*}
\sup_{x\in B} \Bigl \| F(\cdot+{\bf \tau};x)-y_{\cdot;x}\Bigr\|_{P} \leq \epsilon .
\end{align*}
\item[(ii)] \index{function!$({\mathcal B},I',\rho,{\mathcal P})$-uniformly recurrent}
$F(\cdot;\cdot)$ is $({\mathcal B},I',\rho,{\mathcal P})$-uniformly recurrent if and only if for every $B\in {\mathcal B}$ 
there exists a sequence $({\bf \tau}_{k})$ in $I'$ such that $\lim_{k\rightarrow +\infty} |{\bf \tau}_{k}|=+\infty$ and that, for every ${\bf t}\in I$ and $x\in B,$ there exists an element $y_{{\bf t};x}\in \rho (F({\bf t};x))$ such that $F(\cdot+{\bf \tau}_{k};x)-y_{\cdot;x} \in P$ for all $k\in {\mathbb N},$ $x\in B$ and
\begin{align*}
\lim_{k\rightarrow +\infty}\sup_{x\in B} \Bigl \|F(\cdot+{\bf \tau}_{k};x)-y_{\cdot;x}\Bigr\|_{P}=0.
\end{align*}
\end{itemize}
\end{defn}

If $I={\mathbb R}^{n},$ then we can also consider the notions of strong Bohr $({\mathcal B},I',\rho,{\mathcal P})$-almost periodicity and strong $({\mathcal B},I',\rho,{\mathcal P})$-uniform recurrence; we will skip all details for simplicity.
Further on, we omit the term ``${\mathcal B}$'' if $X=\{0\},$ the term ``$I'$'' if $I'=I,$ and the term ``$\rho$'' if $\rho={\mathrm I}.$ If $\rho=c{\mathrm I},$ then we also say that the function $F(\cdot;\cdot)$ is 
Bohr $({\mathcal B},I',c,{\mathcal P})$-almost periodic [Bohr $({\mathcal B},I',c,{\mathcal P})$-uniformly recurrent]; furthermore, if $c=-1,$ then we say that the function $F(\cdot;\cdot)$ is 
Bohr $({\mathcal B},I',{\mathcal P})$-almost anti-periodic [Bohr $({\mathcal B},I',{\mathcal P})$-uniformly anti-recurrent].

We continue with the following example:

\begin{example}\label{budala}
Suppose that $I =[0,\infty)$ or $I= {\mathbb R}.$
A measurable function
$\nu : I \rightarrow (0,\infty)$ is said to be an
admissible weight function\index{admissible weight function} if and only if there exist finite constants $M\geq 1$ and
$\omega \in {\mathbb R}$ such that $\nu(t)\leq Me^{\omega
|t'|}\nu(t+t')$ for all $t,\ t'\in I.$
Let $\nu : [0,\infty) \rightarrow (0,\infty)$ be an admissible weight function; then it is well known that the function $1/\nu(\cdot)$ is locally bounded (see \cite{nova-chaos} for more details about linear topological dynamics and hypercyclic strongly continuous semigroups on weighted function spaces). 
Recently,
Z. Yin and Y. Wei
have considered the weak recurrence of translation operators on weighted Lebesgue spaces and weighted continuous function spaces (\cite{yin}). In particular, these authors have shown that
the existence of a function $f \in Y,$ where $Y= L^{p}_{\nu}([0,\infty) : {\mathbb C})$ or $Y=C_{0,\nu}([0,\infty) : {\mathbb C}),$
satisfying that there exists a strictly increasing sequence $(\alpha_{n})$ of positive reals tending to plus infinity such that
$$
\lim_{n\rightarrow +\infty}\bigl\| f(\cdot+\alpha_{n}) -f(\cdot) \bigr\|_{Y}=0
$$
is equivalent to saying that $\liminf_{t\rightarrow +\infty}\nu(t)=0$; see also the preprint \cite{brian} by W. Brian and J. P. Kelly. In our language, this result can be reworded as follows: Suppose that $P:=Y$ and $d(f,g):=\|f-g\|_{Y}$ for all $f,\ g\in P.$
Then there exists a ${\mathcal P}$-uniformly recurrent function $f \in Y$ if and only if $\liminf_{t\rightarrow +\infty}\nu(t)=0.$ It is without scope of this paper to analyze the corresponding result for the general space $Y= L^{p}_{\nu}(I : {\mathbb C})$ or $Y=C_{0,\nu}(I : {\mathbb C}).$
\end{example}

We can simply prove that any multi-dimensional $(\omega,\rho)$-periodic function is Bohr $(I',\rho,{\mathcal P})$-almost periodic with $I'=\{ k\omega : k\in {\mathbb N}\}$, provided that there exists a positive integer $l\in {\mathbb N}$ such that $\rho^{l}={\mathrm I}.$ But, there exist some pathological cases in which the spaces of 
Bohr $({\mathcal B},I',\rho,{\mathcal P})$-almost periodic functions consist solely of $(\omega,\rho)$-periodic functions. Without going into full details,
we will provide only one example regarding this issue:

\begin{example}
Suppose that the function $1/\nu(\cdot)$ is locally bounded, $P=C_{0,\nu}({\mathbb R}^{n} : Y)$, the metric $d$ is induced by the norm in $P,$  $0\neq \omega \in {\mathbb R}^{n},$ $F: {\mathbb R}^{n}\rightarrow Y$, and the supremum formula 
$$
\sup_{{\bf t}\in {\mathbb R}^{n}}\| F({\bf t}+\omega)- F({\bf t})\|_{Y}=\sup_{{\bf t}\in {\mathbb R}^{n}; |{\bf t}|\geq a} \| F({\bf t} +\omega)-F({\bf t})\|_{Y}
$$ 
holds for all $a>0.$ If $\nu : {\mathbb R}^{n} \rightarrow (0,\infty)$ satisfies 
$\lim_{|{\bf t}|\rightarrow +\infty}\nu({\bf t})=+\infty,$ then $F(\cdot)$ is $\omega$-periodic (i.e., $F({\bf t}+\omega)=F({\bf t}),$ ${\bf t}\in {\mathbb R}^{n}$) whenever
$F(\cdot +\omega)-F(\cdot)\in P.$ In actual fact, if $\epsilon>0$ is a fixed number and the last inclusion holds, then there exists a finite real number $M>0$ such that the assumption $|{\bf t}|\geq M$ implies $\|F({\bf t}+\omega)- F({\bf t})\|_{Y}\leq \epsilon.$ Due to the supremum formula, this implies
$\|F({\bf t}+\omega)- F({\bf t})\|_{Y}\leq \epsilon$ for all ${\bf t}\in {\mathbb R}^{n},$
so that the claimed assertion follows from the fact that the number $\epsilon>0$ was arbitrary.
\end{example}

In the previous example, we have analyzed the extreme case $\lim_{|{\bf t}|\rightarrow +\infty}\nu({\bf t})=+\infty.$ The opposite extreme case $\lim_{|{\bf t}|\rightarrow +\infty}\nu({\bf t})=0$ is also important on account of the following:

\begin{thm}\label{sade}
\begin{itemize}
\item[(i)] Suppose that $F: {\mathbb R}^{n}\rightarrow Y$ is continuous, the function $1/\nu(\cdot)$ is locally bounded, $P=C_{0,\nu}({\mathbb R}^{n} : Y)$, and the metric $d$ is induced by the norm in $P.$ If 
$F(\cdot)$ is Bohr ${\mathcal P}$-almost periodic, then $F(\cdot)$ is almost automorphic, i.e., Levitan $N$-almost periodic and bounded.
\item[(ii)] Suppose that the requirements of \emph{(i)} hold as well as that $F: {\mathbb R}^{n}\rightarrow Y$ is bounded, the function $\nu(\cdot)$ is bounded, and $\lim_{|{\bf t}|\rightarrow +\infty}\nu({\bf t})=0.$ 
Then $F(\cdot)$ is almost automorphic if and only if 
$F(\cdot)$ is Bohr ${\mathcal P}$-almost periodic.
\end{itemize}
\end{thm}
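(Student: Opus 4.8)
The plan is to deduce both parts from the characterization recalled in the introduction: a bounded continuous function $F:{\mathbb R}^{n}\rightarrow Y$ is almost automorphic if and only if it is Levitan $N$-almost periodic. Since here $X=\{0\}$, $\rho={\mathrm I}$ and $I=I'={\mathbb R}^{n}$, Definition \ref{nafaks123456789012345}(i) reads: for every $\epsilon>0$ there is $l>0$ such that for each ${\bf t}_{0}\in{\mathbb R}^{n}$ there is ${\bf \tau}\in B({\bf t}_{0},l)$ with $F(\cdot+{\bf \tau})-F(\cdot)\in C_{0,\nu}({\mathbb R}^{n}:Y)$ and $\sup_{{\bf t}\in{\mathbb R}^{n}}\|F({\bf t}+{\bf \tau})-F({\bf t})\|_{Y}\,\nu({\bf t})\leq\epsilon$. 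For (i) I would first extract the boundedness of $F$ and then the Levitan property; for (ii) the implication ``$\Leftarrow$'' is precisely (i), and for ``$\Rightarrow$'' I would convert a Levitan $(\epsilon,N)$-period into a genuine ${\mathcal P}$-almost period by splitting the supremum at $|{\bf t}|=N$.

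For the boundedness of $F$ in (i): apply the above with $\epsilon=1$ to obtain $l>0$; given ${\bf s}\in{\mathbb R}^{n}$, take ${\bf t}_{0}={\bf s}$ and let ${\bf \tau}\in B({\bf s},l)$ be the associated point, so that ${\bf t}:={\bf s}-{\bf \tau}\in B(0,l)$ and ${\bf t}+{\bf \tau}={\bf s}$, whence
$$
\|F({\bf s})\|_{Y}\leq\bigl\|F({\bf t}+{\bf \tau})-F({\bf t})\bigr\|_{Y}+\|F({\bf t})\|_{Y}\leq\frac{1}{\nu({\bf t})}+\sup_{{\bf r}\in B(0,l)}\|F({\bf r})\|_{Y},
$$
and the right-hand side is bounded uniformly in ${\bf s}$ because $1/\nu(\cdot)$ is bounded on $B(0,l)$ and $F$ is continuous. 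For the Levitan property, fix $\epsilon>0$ and $N>0$, set $\delta_{N}:=\inf_{|{\bf t}|\leq N}\nu({\bf t})$, which is strictly positive since $1/\nu(\cdot)$ is bounded on $B(0,N)$, and apply the definition with $\epsilon\delta_{N}$ in place of $\epsilon$; for the resulting $l>0$ and for each ${\bf t}_{0}$ the associated ${\bf \tau}\in B({\bf t}_{0},l)$ satisfies $\|F({\bf t}+{\bf \tau})-F({\bf t})\|_{Y}\leq\epsilon\delta_{N}/\nu({\bf t})\leq\epsilon$ whenever $|{\bf t}|\leq N$, i.e.\ ${\bf \tau}$ is a Levitan $(\epsilon,N)$-period of $F$. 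Since $F$ is continuous and bounded, the quoted equivalence gives that $F$ is almost automorphic, proving (i).

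For (ii) only ``$\Rightarrow$'' remains. Assume $F\not\equiv 0$ (otherwise the claim is trivial) and set $M:=\sup_{{\bf t}}\|F({\bf t})\|_{Y}\in(0,\infty)$ and $\nu_{\infty}:=\sup_{{\bf t}}\nu({\bf t})\in(0,\infty)$. Being bounded and continuous, $F$ is Levitan $N$-almost periodic. Given $\epsilon>0$, use $\lim_{|{\bf t}|\rightarrow+\infty}\nu({\bf t})=0$ to choose $N>0$ with $\nu({\bf t})\leq\epsilon/(2M)$ for $|{\bf t}|>N$, and apply Levitan $N$-almost periodicity with $\epsilon/\nu_{\infty}$: there is $l>0$ so that for each ${\bf t}_{0}$ there is ${\bf \tau}\in B({\bf t}_{0},l)$ with $\|F({\bf t}+{\bf \tau})-F({\bf t})\|_{Y}\leq\epsilon/\nu_{\infty}$ for $|{\bf t}|\leq N$. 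For such ${\bf \tau}$: if $|{\bf t}|\leq N$ then $\|F({\bf t}+{\bf \tau})-F({\bf t})\|_{Y}\,\nu({\bf t})\leq(\epsilon/\nu_{\infty})\nu({\bf t})\leq\epsilon$, while if $|{\bf t}|>N$ then $\|F({\bf t}+{\bf \tau})-F({\bf t})\|_{Y}\,\nu({\bf t})\leq 2M\cdot\epsilon/(2M)=\epsilon$; moreover $\|F({\bf t}+{\bf \tau})-F({\bf t})\|_{Y}\,\nu({\bf t})\leq 2M\nu({\bf t})\rightarrow 0$ as $|{\bf t}|\rightarrow+\infty$ and ${\bf t}\mapsto F({\bf t}+{\bf \tau})-F({\bf t})$ is continuous, so $F(\cdot+{\bf \tau})-F(\cdot)\in C_{0,\nu}({\mathbb R}^{n}:Y)=P$ and $\|F(\cdot+{\bf \tau})-F(\cdot)\|_{P}\leq\epsilon$. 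Hence $F$ is Bohr ${\mathcal P}$-almost periodic.

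The argument is essentially bookkeeping and I do not anticipate a serious obstacle. The two points requiring real care are the extraction of the boundedness of $F$ in (i) --- where one must center the Bohr ball at the target point ${\bf s}$ so that ${\bf s}-{\bf \tau}$ lands in a fixed ball on which $1/\nu(\cdot)$ is bounded --- and, in (ii), the verification that $F(\cdot+{\bf \tau})-F(\cdot)$ actually lies in $C_{0,\nu}({\mathbb R}^{n}:Y)$, which is exactly the place where the hypothesis $\lim_{|{\bf t}|\rightarrow+\infty}\nu({\bf t})=0$, combined with the boundedness of $F$, is indispensable.
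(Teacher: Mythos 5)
Your proof is correct and follows essentially the same route as the paper: boundedness from a single $\epsilon=1$ almost period landing near the target point, the Levitan property by rescaling $\epsilon$ with a local lower bound for $\nu$, and in (ii) the splitting of the supremum at $|{\bf t}|=N$ using $\lim_{|{\bf t}|\to+\infty}\nu({\bf t})=0$ together with the Levitan $(\epsilon/\|\nu\|_{\infty},N)$-period. The only (cosmetic) difference is in the boundedness step, where you center the Bohr ball at the target point ${\bf s}$ so that ${\bf s}-\tau$ lies in a fixed compact ball, whereas the paper writes an arbitrary point as ${\bf t}_{0}+\tau$ with ${\bf t}_{0}\in K_{\sqrt{n}l}$; your explicit verification that $F(\cdot+\tau)-F(\cdot)\in C_{0,\nu}({\mathbb R}^{n}:Y)$ in (ii) is a detail the paper leaves implicit.
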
 

\begin{proof}
If $F(\cdot)$ is Bohr ${\mathcal P}$-almost periodic, $N>0$ and $\epsilon>0$ are given, then we have the existence of a finite real number $M>0$ such that
$1/\nu({\bf t})\leq M$ for $|{\bf t}|\leq N.$ If the requirements in Definition \ref{nafaks123456789012345} hold with the numbers $\epsilon/M$ and $\tau,$ then it can be simply proved that $\tau$ is a Levitan $(\epsilon,N)$-almost period of function $F(\cdot).$
To show that $F(\cdot)$ is bounded, take $\epsilon=1$ and find a real number $l>0$ such that, for every ${\bf t}_{0}\in {\mathbb R}^{n},$ the cube ${\bf t}_{0}+[0,l]^{n}$
contains a point $\tau$ such that 
\begin{align}\label{sadkraj}
\bigl\| F({\bf t}+\tau)-F({\bf t})\bigr\|_{Y}\nu({\bf t})\leq 1, \quad {\bf t}\in {\mathbb R}^{n}.
\end{align}
Let $\| F({\bf t})\|_{Y}\leq M$ and $1/\nu({\bf t})\leq L$ for ${\bf t}\in K_{\sqrt{n}l}.$ This implies
$$
\bigl\|  F({\bf t}+\tau)\bigr\|_{Y}\leq \| F({\bf t})\|_{Y}+\frac{1}{\nu({\bf t})}\leq M+L,\quad {\bf t}\in K_{\sqrt{n}l}.
$$ 
It can be simply shown that any point ${\bf t}\in {\mathbb R}^{n}$ can be written as ${\bf t}_{0}+\tau,$ where ${\bf t}_{0}\in K_{\sqrt{n}l}$ and $\tau$ satisfies \eqref{sadkraj}. Therefore, $\| F({\bf t})\|_{Y}\leq M+L$ for all ${\bf t}\in {\mathbb R}^{n}$ and (i) is proved. In order to see that (ii) holds,
suppose that $F(\cdot)$ is almost automorphic and $\epsilon>0.$ Then $F(\cdot)$ is Levitan $N$-almost periodic and there exists a finite real number $M>0$ such that the assumptions $|{\bf t}|\geq M$ and $\tau\in {\mathbb R}^{n}$ imply
$$
\bigl\| F({\bf t}+\tau)- F({\bf t})\bigr\|_{Y}\nu({\bf t})\leq 2\|F\|_{\infty}\nu({\bf t})\leq \epsilon.
$$ 
If $|{\bf t}|\leq M$ and $\tau\in {\mathbb R}^{n}$ is a Levitan $(\epsilon/\|\nu\|_{\infty},M)$-period of function $F(\cdot),$ then we have $
\| F({\bf t}+\tau)- F({\bf t})\|_{Y}\nu({\bf t})\leq \epsilon\nu({\bf t})/\|\nu\|_{\infty}\leq \epsilon;$ hence, $F(\cdot)$ is Bohr ${\mathcal P}$-almost periodic. The converse statement follows from (i). 
\end{proof}

Keeping in mind Theorem \ref{sade}, it is meaningful to define, for every almost automorphic function $F : {\mathbb R}^{n}\rightarrow Y$, the following space
\begin{align*}
{\mathrm F}_{F}:=\bigl\{ \nu : {\mathbb R}^{n} \rightarrow Y \, ; \, \mbox{ the function }1/\nu(\cdot)\mbox{ is locally bounded and the function }\\ F(\cdot)\mbox{ is Bohr }{\mathcal P}-\mbox{almost periodic} \bigr\}.
\end{align*}
Due to Theorem \ref{sade}(ii), we have that all bounded functions vanishing at infinity belong to the space ${\mathrm F}_{F}.$ This inclusion can be strict since, for every almost periodic function $F : {\mathbb R}^{n}\rightarrow Y,$ the space 
${\mathrm F}_{F}$ contains all functions bounded from above; furthermore, if the function $F(\cdot)$ is $({\bf \omega}_{j})_{j\in {\mathbb N}_{n}}$-periodic with some numbers
${\bf \omega}_{j}\in {\mathbb R} \setminus \{0\}$ ($1\leq j\leq n$), then the space ${\mathrm F}_{F}$ consists of all functions $\nu : {\mathbb R}^{n} \rightarrow (0,\infty) $ such that the function $1/\nu(\cdot)$ is locally bounded. It is natural to ask whether there exists an almost periodic function 
$F : {\mathbb R}^{n}\rightarrow Y$ which is not $({\bf \omega}_{j})_{j\in {\mathbb N}_{n}}$-periodic for some numbers
${\bf \omega}_{j}\in {\mathbb R} \setminus \{0\}$ ($1\leq j\leq n$) and which additionally satisfies that the space ${\mathrm F}_{F}$ contains an unbounded function?

We proceed with the following illustrative examples:

\begin{example}\label{nawr}
Let us recall that the function $t\mapsto F(t)\equiv 1/(2+\cos t+\cos (\sqrt{2}t)),$ $t\in {\mathbb R}$ is Levitan $N$-almost periodic and unbounded. Due to Theorem \ref{sade}(i), there is no function $\nu : {\mathbb R} \rightarrow (0,\infty)$ such that the function $1/\nu(\cdot)$ is locally bounded and $F(\cdot)$ is 
Bohr ${\mathcal P}$-almost periodic with the metric space ${\mathcal P}$ be defined as  above.
\end{example}

\begin{example}\label{novpr}
Let ${\mathrm F}$ denote the collection of all functions $\nu : {\mathbb R} \rightarrow (0,\infty)$ such that the function $1/\nu(\cdot)$ is locally bounded; if this the case, we denote $P_{\nu}=C_{0,\nu}({\mathbb R} : {\mathbb C})$ and define $d_{\nu}$ to be the metric induced by the norm in $P_{\nu}.$ 
We introduce the binary relation $\sim$ on ${\mathrm F}$ by: $\nu_{1} \sim \nu_{2}$ if and only if every Bohr ${\mathcal P}_{\nu_{1}}$-almost periodic function $F :  {\mathbb R}\rightarrow {\mathbb C}$ is Bohr ${\mathcal P}_{\nu_{2}}$-almost periodic and vice versa; clearly, $\sim$ is an equivalence relation.
If there exist two finite real constants $c_{1}>0$ and $c_{2}>0$ such that
$c_{1}\nu_{1}(t)\leq \nu_{2}(t)\leq c_{2}\nu_{1}(t)$
for all $t\in  {\mathbb R},$ then it is clear that $\nu_{1} \sim \nu_{2}$ (in particular, if there exist two finite real constants $c_{1}>0$ and $c_{2}>0$ such that
$c_{1}\leq \nu (t) \leq c_{2}$ for all $t\in  {\mathbb R},$ $P=C_{0,\nu}({\mathbb R} : {\mathbb C})$, and the metric $d$ is induced by the norm in $P,$ then the function $F(\cdot)$ is almost periodic if and only if the function $F(\cdot)$ is Bohr ${\mathcal P}$-almost periodic). We can use
Theorem \ref{sade} to show that the existence of finite real constants $c_{1}>0$ and $c_{2}>0$ such that $c_{1}\nu_{1}(t)\leq \nu_{2}(t)\leq c_{2}\nu_{1}(t),$
$t\in {\mathbb R}$ is only sufficient but not necessary for relation $\nu_{1} \sim \nu_{2}$ to be satisfied. In actual fact, put $\nu_{1}(t):=1/(t^{2}+1)$ and $\nu_{2}(t):=1/(t^{4}+1)$ for all $t\in {\mathbb R}.$ Then $\nu_{1} \sim \nu_{2}$ due to  Theorem \ref{sade}(ii) but we cannot find finite real constants $c_{1}>0$ and $c_{2}>0$ such that $c_{1}\nu_{1}(t)\leq \nu_{2}(t)\leq c_{2}\nu_{1}(t),$
$t\in  {\mathbb R}.$
\end{example}

The proof of following result is very similar to the proof of \cite[Proposition 2.2]{rho}; we will include all relevant details for the sake of completeness:

\begin{prop}\label{prcko}
Suppose that $\emptyset  \neq I' \subseteq {\mathbb R}^{n},$ $\emptyset  \neq I \subseteq {\mathbb R}^{n},$  $I +I' \subseteq I$, and the function $F : I \times X \rightarrow Y$ is Bohr $({\mathcal B},I',\rho,{\mathcal P})$-almost periodic ($({\mathcal B},I',\rho,{\mathcal P})$-uniformly recurrent), where $\rho$ is a binary relation on $Y$ satisfying $R(F)\subseteq D(\rho)$ and $\rho(y)$ is a singleton for any $y\in R(F).$ If for each ${\bf \tau}\in I'$ we have $\tau +I=I,$ 
as well as $P$ has a linear vector structure, the metric $d$ is translation invariant and the following condition holds:
\begin{itemize}\label{ijk}
\item[(P)] ${\mathcal P}_{1}=(P_{1},d_{1})$ is a metric space, $c\in (0,\infty)$ and for every $f\in P$ and $\tau \in I'$ we have $f(\cdot-\tau)\in P_{1}$ and $\| f(\cdot-\tau)\|_{P_{1}}\leq c\|f\|_{P}.$ 
\end{itemize}
Then $I+(I'-I')\subseteq I$ and the function $F(\cdot;\cdot)$ is Bohr $({\mathcal B},I'-I',{\rm I},{\mathcal P}_{1})$-almost periodic ($({\mathcal B},I'-I',{\rm I},{\mathcal P}_{1})$-uniformly recurrent).
\end{prop}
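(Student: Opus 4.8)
The plan is to run the classical ``difference of two almost‑periods'' argument, but carried out entirely in the ambient pseudonorm $\|\cdot\|_{P}$ and then transported to $\mathcal{P}_{1}$ via condition (P); the single‑valuedness of $\rho$ on $R(F)$ is exactly what lets the auxiliary elements cancel. First I would dispose of the set inclusion: if ${\bf t}\in I$ and ${\bf \tau}_{1},{\bf \tau}_{2}\in I'$, then ${\bf \tau}_{2}+I=I$ forces ${\bf t}-{\bf \tau}_{2}\in I$, whence ${\bf t}+({\bf \tau}_{1}-{\bf \tau}_{2})=({\bf t}-{\bf \tau}_{2})+{\bf \tau}_{1}\in I+I'\subseteq I$; thus $I+(I'-I')\subseteq I$ and every translate written below is a genuine function on $I$. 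Next, since $\rho(F({\bf t};x))$ is a singleton for every $({\bf t},x)\in I\times X$, the element $y_{{\bf t};x}$ in Definition \ref{nafaks123456789012345} is uniquely determined by $({\bf t},x)$; write $G({\bf t};x)$ for this unique point of $\rho(F({\bf t};x))$, so that the \emph{same} function $G$ appears in every instance of the defining estimate, regardless of which admissible period is used.

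For the Bohr almost periodic assertion, fix $B\in\mathcal{B}$ and $\epsilon>0$ and apply Bohr $(\mathcal{B},I',\rho,\mathcal{P})$‑almost periodicity of $F$ with $\epsilon$ replaced by $\epsilon/(2c)$, obtaining a length $l>0$; I claim $L:=2l$ works for $I'-I'$. Given ${\bf s}_{0}\in I'-I'$ write ${\bf s}_{0}={\bf a}-{\bf b}$ with ${\bf a},{\bf b}\in I'$ and pick ${\bf \tau}_{1}\in B({\bf a},l)\cap I'$, ${\bf \tau}_{2}\in B({\bf b},l)\cap I'$ as in the definition, so that
\begin{align*}
F(\cdot+{\bf \tau}_{i};x)-G(\cdot;x)\in P\quad\text{and}\quad\sup_{x\in B}\bigl\|F(\cdot+{\bf \tau}_{i};x)-G(\cdot;x)\bigr\|_{P}\leq\frac{\epsilon}{2c},\qquad i=1,2.
\end{align*}
Since $P$ carries a linear structure and $d$ is translation invariant (so $\|f+g\|_{P}\leq\|f\|_{P}+\|g\|_{P}$, as already used in the proof of Proposition \ref{mackat}), subtracting the two memberships gives $F(\cdot+{\bf \tau}_{1};x)-F(\cdot+{\bf \tau}_{2};x)\in P$ with $\sup_{x\in B}\|F(\cdot+{\bf \tau}_{1};x)-F(\cdot+{\bf \tau}_{2};x)\|_{P}\leq\epsilon/c$. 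Now apply condition (P) with $\tau={\bf \tau}_{2}$ to the function $s\mapsto F(s+{\bf \tau}_{1};x)-F(s+{\bf \tau}_{2};x)$: translating the argument by $-{\bf \tau}_{2}$ yields $F(\cdot+{\bf \sigma};x)-F(\cdot;x)\in P_{1}$, where ${\bf \sigma}:={\bf \tau}_{1}-{\bf \tau}_{2}$, with $\sup_{x\in B}\|F(\cdot+{\bf \sigma};x)-F(\cdot;x)\|_{P_{1}}\leq c\cdot(\epsilon/c)=\epsilon$. Finally ${\bf \sigma}\in I'-I'$ and $|{\bf \sigma}-{\bf s}_{0}|\leq|{\bf \tau}_{1}-{\bf a}|+|{\bf \tau}_{2}-{\bf b}|\leq 2l=L$, which is precisely what Definition \ref{nafaks123456789012345}(i) demands for Bohr $(\mathcal{B},I'-I',{\rm I},\mathcal{P}_{1})$‑almost periodicity.

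For the uniformly recurrent case the scheme is identical. Fix $B\in\mathcal{B}$ and let $({\bf \tau}_{k})$ in $I'$ satisfy $|{\bf \tau}_{k}|\to+\infty$ and $\sup_{x\in B}\|F(\cdot+{\bf \tau}_{k};x)-G(\cdot;x)\|_{P}\to 0$. For each $m\in\mathbb{N}$ choose $j_{m}$ beyond which that supremum is $\leq 1/(2cm)$, then $k_{m}>j_{m}$ with $|{\bf \tau}_{k_{m}}|\geq m+|{\bf \tau}_{j_{m}}|$, and put ${\bf \sigma}_{m}:={\bf \tau}_{k_{m}}-{\bf \tau}_{j_{m}}\in I'-I'$. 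Then $|{\bf \sigma}_{m}|\geq m\to+\infty$, while the same subtract‑then‑translate computation gives $F(\cdot+{\bf \sigma}_{m};x)-F(\cdot;x)\in P_{1}$ with $\sup_{x\in B}\|F(\cdot+{\bf \sigma}_{m};x)-F(\cdot;x)\|_{P_{1}}\leq c\cdot(1/(cm))=1/m\to 0$, i.e.\ Definition \ref{nafaks123456789012345}(ii) for $(\mathcal{B},I'-I',{\rm I},\mathcal{P}_{1})$‑uniform recurrence.

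I expect the only genuinely delicate point to be the relational bookkeeping: one must use that $\rho$ is single‑valued on $R(F)$ to be sure that the auxiliary functions attached to $F(\cdot+{\bf \tau}_{1};x)$ and to $F(\cdot+{\bf \tau}_{2};x)$ are literally the same $G(\cdot;x)$ before cancelling, and one must track that each translate lands in the correct space — first in $P$ after subtraction, then in $P_{1}$ through condition (P) — which rests on the domain identity ${\bf \tau}+I=I$, ${\bf \tau}\in I'$. Once this is arranged, the remaining estimates are routine manipulations of the translation‑invariant pseudonorm.
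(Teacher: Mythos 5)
Your proposal is correct and follows essentially the same route as the paper: take two $\rho$-almost periods $\tau_{1},\tau_{2}$, use the single-valuedness of $\rho$ on $R(F)$ to cancel the common auxiliary function, subtract in $P$ via the linear structure and translation invariance, and then shift by $-\tau_{2}$ through condition (P) to land in $\mathcal{P}_{1}$ with period $\tau_{1}-\tau_{2}\in I'-I'$. The only differences are cosmetic (you normalize by $\epsilon/(2c)$ up front rather than ending with $c\epsilon$) plus the fact that you spell out the inclusion $I+(I'-I')\subseteq I$ and the uniformly recurrent case, both of which the paper leaves to the reader.
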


\begin{proof}
We will prove the statement for Bohr $({\mathcal B},I',\rho,{\mathcal P})$-almost periodic functions. As in \cite{rho}, we have $I+(I'-I')\subseteq I.$ Further on, let $\epsilon>0$ and $B\in {\mathcal B}$ be given. Then there exists $l>0$ such that for each ${\bf t}_{0}^{1},\  {\bf t}_{0}^{2} \in I'$ there exist two points ${\bf \tau}_{1} \in B({\bf t}_{0}^{1},l) \cap I'$ and
${\bf \tau}_{2} \in B({\bf t}_{0}^{2},l) \cap I'$
such that, for every $x\in B,$ we have
\begin{align*}
\bigl\|F\bigl(\cdot+{\bf \tau}_{1};x\bigr)-\rho(F(\cdot;x))\bigr\|_{P} \leq \epsilon/2 \ \ \mbox{ and }\ \ \bigl\|F\bigl(\cdot+{\bf \tau}_{2};x\bigr)-\rho(F(\cdot;x))\bigr\|_{P} \leq \epsilon/2.
\end{align*}
Since $P$ has a linear vector structure and the metric $d$ is translation invariant, the above implies
\begin{align*}
\bigl\|F\bigl(\cdot+{\bf \tau}_{1};x\bigr)-F\bigl(\cdot+{\bf \tau}_{2};x\bigr)\bigr\|_{P} \leq \epsilon,\ x\in B.
\end{align*}
Using condition (P) and translation for the vector $-\tau_{2},$ we get 
\begin{align*}
\bigl\|F\bigl(\cdot+\bigl[\tau_{2}-{\bf \tau}_{1}\bigr];x\bigr)-F\bigl(\cdot;x\bigr)\bigr\|_{P_{1}} \leq c\epsilon,\ x\in B.
\end{align*}
Since $\tau_{2}-\tau_{1}\in B({\bf t}_{0}^{2}-{\bf t}_{0}^{1},2l) \cap (I'-I'),$ this simply implies the claimed.
\end{proof}

\begin{cor}\label{rtanj}
Suppose that $\emptyset  \neq I' \subseteq {\mathbb R}^{n},$ and the function $F : {\mathbb R}^{n} \times X \rightarrow Y$ is Bohr $({\mathcal B},I',\rho,{\mathcal P})$-almost periodic ($({\mathcal B},I',\rho,{\mathcal P})$-uniformly recurrent), where $\rho$ is a binary relation on $Y$ satisfying $R(F)\subseteq D(\rho)$ and $\rho(y)$ is a singleton for any $y\in R(F).$ Suppose that $P$ has a linear vector structure, the metric $d$ is translation invariant and condition \emph{(P)} holds.
Then the function $F(\cdot;\cdot)$ is Bohr $({\mathcal B},I'-I',{\rm I},{\mathcal P}_{1})$-almost periodic ($({\mathcal B},I'-I',{\rm I},{\mathcal P}_{1})$-uniformly recurrent).
\end{cor}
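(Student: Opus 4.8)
The plan is to obtain Corollary \ref{rtanj} as the immediate specialization of Proposition \ref{prcko} to the case $I={\mathbb R}^{n}$. First I would verify that, once $I={\mathbb R}^{n}$, every structural hypothesis of Proposition \ref{prcko} that is not already explicitly assumed in the corollary is automatically satisfied. Indeed, $I+I'={\mathbb R}^{n}+I'\subseteq {\mathbb R}^{n}=I$ holds trivially for any $\emptyset\neq I'\subseteq {\mathbb R}^{n}$, and likewise $\tau+I=\tau+{\mathbb R}^{n}={\mathbb R}^{n}=I$ for every $\tau\in I'$. The remaining assumptions of the proposition --- that $F$ is Bohr $({\mathcal B},I',\rho,{\mathcal P})$-almost periodic (resp. $({\mathcal B},I',\rho,{\mathcal P})$-uniformly recurrent), that $\rho$ is a binary relation on $Y$ with $R(F)\subseteq D(\rho)$ and $\rho(y)$ a singleton for every $y\in R(F)$, that $P$ has a linear vector structure, that $d$ is translation invariant, and that condition (P) holds --- coincide with those imposed in the statement of the corollary.

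Consequently Proposition \ref{prcko} applies verbatim and delivers both the inclusion $I+(I'-I')\subseteq I$ (which here is the trivial ${\mathbb R}^{n}+(I'-I')\subseteq {\mathbb R}^{n}$) and the conclusion that $F(\cdot;\cdot)$ is Bohr $({\mathcal B},I'-I',{\mathrm I},{\mathcal P}_{1})$-almost periodic (resp. $({\mathcal B},I'-I',{\mathrm I},{\mathcal P}_{1})$-uniformly recurrent), which is exactly the assertion of Corollary \ref{rtanj}. There is no genuine obstacle in this argument; the only point deserving a moment's attention is that the hypothesis ``$\tau+I=I$ for each $\tau\in I'$'' of the proposition, which is a real restriction on the interplay between $I$ and $I'$ in the general setting, becomes vacuous when $I={\mathbb R}^{n}$, so that dropping it from the corollary's hypotheses incurs no loss of generality. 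If one preferred a self-contained argument, one could instead simply repeat the short computation in the proof of Proposition \ref{prcko}: choose, for given $\epsilon>0$ and $B\in{\mathcal B}$, the two translations $\tau_{1},\tau_{2}$ near prescribed points of $I'$, use the linear structure and translation invariance of $d$ together with the singleton-valuedness of $\rho$ on $R(F)$ to bound $\|F(\cdot+\tau_{1};x)-F(\cdot+\tau_{2};x)\|_{P}$, and then apply condition (P) with the shift $-\tau_{2}$ to transfer the estimate to $\|F(\cdot+(\tau_{2}-\tau_{1});x)-F(\cdot;x)\|_{P_{1}}\le c\epsilon$ with $\tau_{2}-\tau_{1}\in I'-I'$; but invoking the proposition is cleaner.
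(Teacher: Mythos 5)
Your proposal is correct and matches the paper's intent exactly: the corollary is stated without proof precisely because it is the specialization of Proposition \ref{prcko} to $I={\mathbb R}^{n}$, where the conditions $I+I'\subseteq I$ and $\tau+I=I$ for $\tau\in I'$ hold trivially and all remaining hypotheses are carried over verbatim. Your observation that the restriction ``$\tau+I=I$'' becomes vacuous in this setting is the right (and only) point that needs checking.
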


The following example is a slight modification of \cite[Example 2.8]{rho}:

\begin{example}\label{pripaz}
Suppose that $\rho=T\in L(Y),$ $I=I'=[0,\infty)$ or $I=I'={\mathbb R},$ and $X=\{0\}.$ 
Clearly, for each $t\in I,$ $\tau \in I$ and $l\in {\mathbb{N}},$ we have
\begin{align*}
F\bigl(t +l\tau \bigr)-T^{l}F(t) 
=\sum_{j=0}^{l-1}T^{j}\Bigl[F\bigl(t+(l-j)\tau \bigr)-TF\bigl(t+(l-j-1)\tau \bigr)\Bigr].
\end{align*}
Let for each $f\in P$ and $\tau \in I$ we have $f(\cdot+\tau)\in P,$ $\| f(\cdot+\tau)\|_{P}=\|f\|_{P}$, $Tf\in P$ and $\| Tf\|_{P}\leq c_{T}\|f\|_{P}$ for some finite real constant $c_{T}>0$ independent of $f\in P.$ 
This implies that the function
$F(\cdot)$ is $(T^{l},{\mathcal P})$-almost periodic ($(T^{l},{\mathcal P})$-uniformly recurrent), provided that $F(\cdot)$ is $(T,{\mathcal P})$-almost periodic ($(T,{\mathcal P})$-uniformly recurrent). In particular, the function $F(\cdot)$ is ${\mathcal P}$-almost periodic (${\mathcal P}$-uniformly recurrent), provided that $F(\cdot)$ is $(T,{\mathcal P})$-almost periodic ($(T,{\mathcal P})$-uniformly recurrent) and there exists a positive integer $l\in {\mathbb N}$ such that $T^{l}={\rm I};$ a similar statement can be formulated for $(T,{\mathcal P})$-almost anti-periodic ($(T,{\mathcal P})$-uniformly anti-recurrent) functions.

Consider now the sum $f(\cdot)$ of functions $f_{1}(\cdot)$ and $f_{2}(\cdot)$ from Example \ref{stojko}(i). In our concrete situation, $P=\{ f\in C_{b}({\mathbb R} : {\mathbb R}) \ ; \ \sup_{t\in {\mathbb R}}V(t;f)<\infty\}$ and $d(f,g):=\|f-g\|_{\infty}+\sup_{t\in {\mathbb R}}V(t;f-g),$ $f,\ g\in P,$ where $V(t;f)$ denotes the total variation of function $f(\cdot)$ on the segment $[t-1,t+1].$ If $f(\cdot)$ is $(c,{\mathcal P})$-almost periodic for some $c\in {\mathbb C} \setminus \{0\},$ then the function $f(\cdot)$ must be $c$-almost periodic, and therefore, we must have $c=\pm 1$ since the function $f(\cdot)$ is real-valued (\cite{nova-selected}). 
Since the metric space ${\mathcal P}$ satisfies all requirements from the first part of this example with $T=c{\mathrm I},$  $(-1,{\mathcal P})$-almost periodicity of $f(\cdot)$ would imply its  ${\mathcal P}$-almost periodicity, which is not the case. Therefore, there is no $c\in {\mathbb C} \setminus \{0\}$ such that
$f(\cdot)$ is $(c,{\mathcal P})$-almost periodic. We can similarly prove that there is no $c\in {\mathbb C} \setminus \{0\}$ such that the sum $f(\cdot)$ of functions $f_{1}(\cdot)$ and $f_{2}(\cdot)$ from Example \ref{stojko}(ii) is Lipschitz $c$-almost periodic, with the meaning clear.
\end{example}

Concerning Bohr $({\mathcal B},I',\rho,{\mathcal P})$-almost periodic functions in the finite-dimensional spaces, we will only note that the statement of \cite[Proposition 2.20]{rho} admits a reformulation in our framework provided that $P$ has a linear vector structure and the metric $d$ is translation invariant.

The proof of following simple proposition is trivial and therefore omitted:

\begin{prop}\label{als}
Suppose that $\emptyset  \neq I' \subseteq {\mathbb R}^{n},$ $\emptyset  \neq I \subseteq {\mathbb R}^{n},$ $\rho$ is a binary relation on $Y,$ and $I +I' \subseteq I.$ Suppose, further, that
$P_{1} \subseteq Y^{I},$ 
${\mathcal P}_{1}=(P_{1},d_{1})$ is a metric space, $P\subseteq P_{1}$, and there exists a finite real constant $c>0$ such that $\| f\|_{P_{1}}\leq c \| f\|_{P}$ for all $f\in P.$ 
Then the following holds:
\begin{itemize}
\item[(i)]
If $F : I \times X \rightarrow Y$ is Bohr $({\mathcal B},I',\rho,{\mathcal P})$-almost periodic [$({\mathcal B},I',\rho,{\mathcal P})$-uniformly recurrent], then $F(\cdot;\cdot)$ is Bohr $({\mathcal B},I',\rho,{\mathcal P}_{1})$-almost periodic [$({\mathcal B},I',\rho,{\mathcal P}_{1})$-uniformly recurrent].
\item[(ii)] Let \eqref{lepolepo} hold. If $F : I \times X \rightarrow Y$ is $({\mathrm R},{\mathcal B},{\mathcal P})$-multi-almost periodic, resp. strongly $({\mathrm R},{\mathcal B},{\mathcal P})$-multi-almost periodic in the case that $I={\mathbb R}^{n},$ then 
$F(\cdot;\cdot)$ is $({\mathrm R},{\mathcal B},{\mathcal P}_{1})$-multi-almost periodic, resp. strongly $({\mathrm R},{\mathcal B},{\mathcal P}_{1})$-multi-almost periodic.
\item[(iii)] Let \eqref{lepolepo121} hold. If $F : I \times X \rightarrow Y$ is $({\mathrm R}_{X},{\mathcal B},{\mathcal P})$-multi-almost periodic, resp. strongly $({\mathrm R}_{X},{\mathcal B},{\mathcal P})$-multi-almost periodic in the case that $I={\mathbb R}^{n},$ then 
$F(\cdot;\cdot)$ is $({\mathrm R}_{X},{\mathcal B},{\mathcal P}_{1})$-multi-almost periodic, resp. strongly $({\mathrm R}_{X},{\mathcal B},{\mathcal P}_{1})$-multi-almost periodic.
\end{itemize}
\end{prop}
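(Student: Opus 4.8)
The plan is to observe that the two hypotheses on $P$ and $P_{1}$ --- namely $P\subseteq P_{1}$ and $\|f\|_{P_{1}}\leq c\|f\|_{P}$ for every $f\in P$ --- are precisely what is needed to transport, line by line, every membership clause and every limit relation in the relevant definitions from the metric space ${\mathcal P}$ to the metric space ${\mathcal P}_{1}$. The two facts to be used repeatedly are: (a) any difference of functions that lies in $P$ automatically lies in $P_{1}$; and (b) if such a difference has $\|\cdot\|_{P}$-norm $\leq \eta$ (respectively, $\|\cdot\|_{P}$-norm tending to $0$), then it has $\|\cdot\|_{P_{1}}$-norm $\leq c\eta$ (respectively, $\|\cdot\|_{P_{1}}$-norm tending to $0$). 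No analytic estimate is involved; the argument is a purely formal unwinding of the definitions, which is why the proof was omitted.

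For part (i), I would take $F(\cdot;\cdot)$ Bohr $({\mathcal B},I',\rho,{\mathcal P})$-almost periodic, fix $B\in {\mathcal B}$ and $\epsilon>0$, and apply Definition \ref{nafaks123456789012345}(i) with $\epsilon/c$ in place of $\epsilon$; this furnishes $l>0$ such that for each ${\bf t}_{0}\in I'$ there are ${\bf \tau}\in B({\bf t}_{0},l)\cap I'$ and elements $y_{{\bf t};x}\in \rho(F({\bf t};x))$ with $F(\cdot+{\bf \tau};x)-y_{\cdot;x}\in P$ for all $x\in B$ and $\sup_{x\in B}\|F(\cdot+{\bf \tau};x)-y_{\cdot;x}\|_{P}\leq \epsilon/c$. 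By fact (a) the difference lies in $P_{1}$, and by fact (b) one gets $\sup_{x\in B}\|F(\cdot+{\bf \tau};x)-y_{\cdot;x}\|_{P_{1}}\leq \epsilon$, so the same $l$ witnesses Bohr $({\mathcal B},I',\rho,{\mathcal P}_{1})$-almost periodicity. For the uniformly recurrent case I would keep the same sequence $({\bf \tau}_{k})$ and the same choices $y_{{\bf t};x}$, observing that by (a) and (b), $\lim_{k\to+\infty}\sup_{x\in B}\|F(\cdot+{\bf \tau}_{k};x)-y_{\cdot;x}\|_{P_{1}}\leq c\lim_{k\to+\infty}\sup_{x\in B}\|F(\cdot+{\bf \tau}_{k};x)-y_{\cdot;x}\|_{P}=0$.

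For parts (ii) and (iii), given $B\in {\mathcal B}$ and a sequence in ${\mathrm R}$ (respectively, ${\mathrm R}_{X}$), I would take the subsequence $({\bf b}_{k_{l}})$ and the limit function $F^{\ast}$ supplied by Definition \ref{eovakoapp-new} (respectively, Definition \ref{eovakoapp1-new}) relative to ${\mathcal P}$, and show that the very same subsequence and $F^{\ast}$ work relative to ${\mathcal P}_{1}$. Indeed, for every $l\in {\mathbb N}$ and $x\in B$ the relevant difference $F(\cdot+(b_{k_{l}}^{1},\dots,b_{k_{l}}^{n});x)-F^{\ast}(\cdot;x)$ (respectively, with $x+x_{k_{l}}$) belongs to $P\subseteq P_{1}$ by (a), and for each $B'\in L(B;{\bf b})$ (respectively, $L(B;({\bf b};{\bf x}))$), fact (b) combined with \eqref{love12345678app-new} yields
\begin{align*}
\limsup_{l\to+\infty}\sup_{x\in B'}&\Bigl\|F\bigl(\cdot+(b_{k_{l}}^{1},\dots,b_{k_{l}}^{n});x\bigr)-F^{\ast}(\cdot;x)\Bigr\|_{P_{1}}
\\& \leq c\lim_{l\to+\infty}\sup_{x\in B'}\Bigl\|F\bigl(\cdot+(b_{k_{l}}^{1},\dots,b_{k_{l}}^{n});x\bigr)-F^{\ast}(\cdot;x)\Bigr\|_{P}=0.
\end{align*}
In the strong case ($I={\mathbb R}^{n}$) the second family of differences $F^{\ast}(\cdot-(b_{k_{l}}^{1},\dots,b_{k_{l}}^{n});x)-F(\cdot;x)$ (respectively, with $x-x_{k_{l}}$) is treated identically via the corresponding limit relation. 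The only thing requiring attention will be the bookkeeping --- verifying that each ``$\cdots\in P$'' clause and each supremum/limit in Definitions \ref{eovakoapp-new}, \ref{eovakoapp1-new} and \ref{nafaks123456789012345} is genuinely covered by (a) and (b), and that the multiplicative constant $c$ never obstructs the passage to the limit --- so I do not expect any substantive obstacle.
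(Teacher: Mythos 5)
Your proposal is correct and is precisely the routine unwinding of the definitions that the paper has in mind when it omits the proof as trivial: the inclusion $P\subseteq P_{1}$ handles every membership clause, and the inequality $\|f\|_{P_{1}}\leq c\|f\|_{P}$ (applied with $\epsilon/c$ in the Bohr case, and to the limits in the multi-almost periodic cases) handles every estimate. No gap; nothing further is needed.
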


We also have the following:
\begin{itemize}
\item[(i)] Suppose that $c\in {\mathbb C},$ $cf\in P$ for all $f\in P,$ and there exists a finite real number $\phi(c)>0$ such that $\|cf\|_{P}\leq \phi(c)\|f\|_{P}$ for all $f\in P.$
If the function $F : I\times X \rightarrow Y$ is 
Bohr $({\mathcal B},I',\rho,{\mathcal P})$-almost periodic [$({\mathcal B},I',\rho,{\mathcal P})$-uniformly recurrent], then the function $cF(\cdot;\cdot)$ is 
Bohr $({\mathcal B},I',\rho_{c},{\mathcal P})$-almost periodic [$({\mathcal B},I',\rho_{c},{\mathcal P})$-uniformly recurrent], where 
$$
\rho_{c}:=\Bigl\{ (y_{1},y_{2}) \in Y\times Y : (\exists {\bf t}\in I) \ (\exists x\in X)\ y_{1}=cF({\bf t};x)\mbox{ and }y_{2}\in c\rho(F({\bf t};x)) \Bigr\}.
$$
\item[(ii)] Suppose that $\tau \in {\mathbb R}^{n}$, $\tau +I\subseteq I,$ $x_{0}\in X$ and, for every $f\in P,$ we have $f(\cdot+\tau)\in P$ and the existence of a finite real number $c_{\tau}>0$ such that $\| f(\cdot +\tau)\|_{P}\leq c_{\tau}\|f\|_{P}$ for all $f\in P.$ Define 
${\mathcal B}_{x_{0}}:= \{-x_{0}+B : B\in {\mathcal B}\}$ for every $B\in {\mathcal B}.$ If the function $F : I\times X \rightarrow Y$ is Bohr $({\mathcal B},I',\rho,{\mathcal P})$-almost periodic [$({\mathcal B},I',\rho,{\mathcal P})$-uniformly recurrent],
then the function $F(\cdot+\tau;\cdot +x_{0})$ is Bohr $({\mathcal B}_{x_{0}},I',\rho,{\mathcal P})$-almost periodic [$({\mathcal B}_{x_{0}},I',\rho,{\mathcal P})$-uniformly recurrent].
\end{itemize}

We can illustrate the notion introduced in this section and the former section by slight modifications of \cite[Example 6.13, Example 6.15]{marko-manuel-ap}. In order to avoid any plagiarism, we will only rearrange \cite[Example 6.13(i)]{marko-manuel-ap} for our new purposes:

\begin{example}\label{duca}
Suppose that $F_{j} : X \rightarrow Y$ is a continuous function satisfying that for each $B\in {\mathcal B}$ we have $\sup_{x\in B}\| F_{j}(x) \|_{Y}<\infty .$ Suppose, further, that the complex-valued mapping $t\mapsto (\int_{0}^{t}f_{1}(s)\, ds,...,\int_{0}^{t}f_{n}(s)\, ds),$ $t\geq 0$ is ${\mathcal P}$-almost periodic ($1\leq j \leq n$), where $1/\nu(\cdot)$ is locally bounded function, $P:=C_{0,\nu}([0,\infty) : {\mathbb C}^{n})$ and $d(f,g):=\|f-g\|_{P}$ for all $f,\ g\in P$. Set
\begin{align*}
F\bigl(t_{1},\cdot \cdot \cdot,t_{n+1}; x\bigr):=\sum_{j=1}^{n}\int_{t_{j}}^{t_{j+1}}f_{j}(s)\, ds \cdot F_{j}(x)\ \mbox{ for all }x\in X \mbox{ and } t_{j}\geq 0,\ 1\leq j\leq n.
\end{align*}
Arguing as in the above-mentioned example, we get that, for every $B\in {\mathcal B},$ $t_{1},\ \tau_{1}\in [0,\infty);\ ...\ ;\ t_{n+1},\ \tau_{n+1}\in [0,\infty)$ and $\epsilon>0,$ 
\begin{align*}
\Bigl\|F\bigl(& t_{1}+\tau_{1},\cdot \cdot \cdot,t_{n+1}+\tau_{n+1}; x\bigr)-F\bigl(t_{1},\cdot \cdot \cdot,t_{n+1}; x\bigr)\Bigr\|_{Y}
\\& \leq   M\sum_{j=1}^{n}\Biggl\{\Biggl| \int^{t_{j+1}+\tau_{j+1}}_{0}f_{j}(s)\, ds-\int^{t_{j+1}}_{0} f_{j}(s)\, ds\Biggr|
\\ & + \Biggl|\int^{t_{j}+\tau_{j}}_{0}f_{j}(s)\, ds-\int^{t_{j}}_{0}f_{j}(s)\, ds\Biggr|\Biggr\},
\end{align*}
where $M=\sup_{x\in B, 1\leq j\leq n}\bigl\|F_{j}(x)\bigr\|_{Y}.$ This implies that the function $F(\cdot;\cdot)$ is Bohr $({\mathcal B},{\mathcal P}_{1})$-almost periodic, where $P_{1}:=C_{0,\nu_{1}}([0,\infty)^{n} : Y)$ with 
$$
\nu_{1}\bigl(t_{1},...,t_{n+1}\bigr):=\Biggl[ \frac{1}{\nu(t_{1})}+...+\frac{1}{\nu(t_{n+1})}\Biggr]^{-1},\quad t_{j}\geq 0 \ \ (1\leq j\leq n+1),
$$
and $d_{1}(f,g):=\|f-g\|_{P_{1}}$ for all $f,\ g\in P_{1}.$
\end{example}

We continue by observing that we can formulate and prove an analogue of \cite[Proposition 2.27(i)]{marko-manuel-ap} and Proposition \ref{mackat} for the function spaces introduced in Definition \ref{nafaks123456789012345}; on the other hand, the statement of \cite[Theorem 2.37]{marko-manuel-ap} which concerns the extensions of multi-dimensional almost periodic functions does not admit a satisfactory reformulation in our new framework. Proposition \ref{convdiaggas} and Theorem \ref{nova} can be reformulated as follows (we do not need here the existence of a finite real number $c>0$ such that $\nu({\bf t})\geq c$ for all ${\bf t}\in {\mathbb R}^{n}$):

\begin{prop}\label{convdiaggases}
Let $P:=C_{0,\nu}({\mathbb R}^{n} :Y)$ and $d(f,g):=\| f-g\|_{C_{0,\nu}({\mathbb R}^{n} :Y)}$ for all $f,\ g\in P.$ 
Suppose that  $c\in {\mathbb C}\setminus \{0\}$ and $\emptyset \neq I'\subseteq {\mathbb R}^{n},$ $h\in L^{1}({\mathbb R}^{n}),$ and $F : {\mathbb R}^{n} \times X \rightarrow Y$ is a continuous function satisfying that for each $B\in {\mathcal B}$ there exists a finite real number $\epsilon_{B}>0$ such that
$\sup_{{\bf t}\in {\mathbb R}^{n},x\in B^{\cdot}}\|F({\bf t},x)\|_{Y}<+\infty,$
where $B^{\cdot} \equiv B^{\circ} \cup \bigcup_{x\in \partial B}B(x,\epsilon_{B}).$ 
Then the function $
(h\ast F)(\cdot;\cdot)$, given by \eqref{lpm}, 
is well defined and for each $B\in {\mathcal B}$ we have $\sup_{{\bf t}\in {\mathbb R}^{n},x\in B^{\cdot}}\|(h\ast F)({\bf t};x)\|_{Y}<+\infty;$ furthermore, if $F(\cdot;\cdot)$ is 
Bohr $(I',c,{\mathcal P})$-almost periodic, then $
(h\ast F)(\cdot;\cdot)$ is Bohr $(I',c,{\mathcal P})$-almost periodic.
\end{prop}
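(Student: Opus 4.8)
The plan is to handle the two assertions separately. The well-definedness of $h\ast F$ and the boundedness over $B^{\cdot}$ come straight from the global bound on $F$ over $B^{\cdot}$ together with $h\in L^{1}(\mathbb{R}^{n})$; the preservation of Bohr almost periodicity will be reduced to a single mapping estimate for the convolution $g\mapsto h\ast g$ on the weighted space $C_{0,\nu}(\mathbb{R}^{n}:Y)$.

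First, fix $B\in\mathcal{B}$ and put $M_{B}:=\sup_{{\bf t}\in\mathbb{R}^{n},\,x\in B^{\cdot}}\|F({\bf t};x)\|_{Y}<\infty$. For such $x$, the integrand $\sigma\mapsto h(\sigma)F({\bf t}-\sigma;x)$ in \eqref{lpm} is strongly measurable (continuity of $F$) and dominated by $|h(\sigma)|M_{B}$, so the integral is an absolutely convergent Bochner integral with $\|(h\ast F)({\bf t};x)\|_{Y}\le M_{B}\|h\|_{L^{1}(\mathbb{R}^{n})}$ for all ${\bf t}\in\mathbb{R}^{n}$ and $x\in B^{\cdot}$. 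Since $B\subseteq B^{\cdot}$ and each point of $X$ lies in some member of $\mathcal{B}$, $h\ast F$ is everywhere defined, and the stated supremum bound over $B^{\cdot}$ follows; a routine dominated-convergence argument (each point of $B$ has a neighbourhood contained in $B^{\cdot}$ on which $\|F\|_{Y}\le M_{B}$) also yields continuity of $h\ast F$, which is not actually needed in Definition \ref{nafaks123456789012345}.

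Now assume $F$ is Bohr $(I',c,\mathcal{P})$-almost periodic, i.e. $\rho=c\,{\mathrm I}$, and (as in Proposition \ref{convdiaggas}) fix a function $w:\mathbb{R}^{n}\to(0,\infty)$ with $hw\in L^{1}(\mathbb{R}^{n})$ for which \eqref{svajni} holds. The core step is the estimate
\begin{align}\label{convbound}
h\ast g\in C_{0,\nu}(\mathbb{R}^{n}:Y)\ \text{ and }\ \bigl\|h\ast g\bigr\|_{C_{0,\nu}(\mathbb{R}^{n}:Y)}\le\|hw\|_{L^{1}(\mathbb{R}^{n})}\bigl\|g\bigr\|_{C_{0,\nu}(\mathbb{R}^{n}:Y)},\qquad g\in C_{0,\nu}(\mathbb{R}^{n}:Y).
\end{align}
Its norm part follows from $\|(h\ast g)({\bf t})\|_{Y}\nu({\bf t})\le\int_{\mathbb{R}^{n}}|h(\sigma)|\,\|g({\bf t}-\sigma)\|_{Y}\,\nu({\bf t})\,d\sigma$ after inserting $\nu({\bf t})=\nu(({\bf t}-\sigma)+\sigma)\le\nu({\bf t}-\sigma)\,w(\sigma)$; the decay at infinity is obtained by splitting the $\sigma$-integral at $|\sigma|=R$, the tail $|\sigma|>R$ being $\le\|g\|_{C_{0,\nu}(\mathbb{R}^{n}:Y)}\int_{|\sigma|>R}|h(\sigma)|w(\sigma)\,d\sigma$ (small for large $R$ since $hw\in L^{1}$), and on $|\sigma|\le R$ using that $\|g({\bf t}-\sigma)\|_{Y}\nu({\bf t}-\sigma)\to0$ uniformly over $|\sigma|\le R$ as $|{\bf t}|\to\infty$; continuity of $h\ast g$ is once more dominated convergence. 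I expect \eqref{convbound} to be the main obstacle: it is exactly here that the submultiplicativity \eqref{svajni} is indispensable, whereas the former lower bound $\nu\ge c$ plays no role.

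To finish, let $B\in\mathcal{B}$ and $\epsilon>0$ be given, and apply the Bohr $(I',c,\mathcal{P})$-almost periodicity of $F$ with $\epsilon':=\epsilon/\|hw\|_{L^{1}(\mathbb{R}^{n})}$: there is $l>0$ such that for each ${\bf t}_{0}\in I'$ one finds ${\bf \tau}\in B({\bf t}_{0},l)\cap I'$ with $g_{x}:=F(\cdot+{\bf \tau};x)-cF(\cdot;x)\in C_{0,\nu}(\mathbb{R}^{n}:Y)$ for all $x\in B$ and $\sup_{x\in B}\|g_{x}\|_{C_{0,\nu}(\mathbb{R}^{n}:Y)}\le\epsilon'$. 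By linearity of the Bochner integral,
\begin{align*}
(h\ast F)({\bf t}+{\bf \tau};x)-c\,(h\ast F)({\bf t};x)=\int_{\mathbb{R}^{n}}h(\sigma)\bigl[F({\bf t}+{\bf \tau}-\sigma;x)-cF({\bf t}-\sigma;x)\bigr]\,d\sigma=(h\ast g_{x})({\bf t}),
\end{align*}
so \eqref{convbound} gives $(h\ast F)(\cdot+{\bf \tau};x)-c\,(h\ast F)(\cdot;x)\in C_{0,\nu}(\mathbb{R}^{n}:Y)$ and $\sup_{x\in B}\|(h\ast F)(\cdot+{\bf \tau};x)-c\,(h\ast F)(\cdot;x)\|_{C_{0,\nu}(\mathbb{R}^{n}:Y)}\le\|hw\|_{L^{1}(\mathbb{R}^{n})}\,\epsilon'=\epsilon$. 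Since $\rho\bigl((h\ast F)({\bf t};x)\bigr)=\{c\,(h\ast F)({\bf t};x)\}$, this is precisely what Definition \ref{nafaks123456789012345}(i) requires, so $h\ast F$ is Bohr $(I',c,\mathcal{P})$-almost periodic; the $(\mathcal{B},I',\rho,\mathcal{P})$-uniform recurrence version is proved in the same way, with a sequence $({\bf \tau}_{k})$ replacing ${\bf \tau}$.
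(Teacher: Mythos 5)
Your proof is correct and follows the route the paper itself indicates: Proposition \ref{convdiaggases} is stated without proof as a reformulation of Proposition \ref{convdiaggas}, and the weighted convolution bound you isolate, namely $\|h\ast g\|_{C_{0,\nu}({\mathbb R}^{n}:Y)}\leq \|hw\|_{L^{1}({\mathbb R}^{n})}\|g\|_{C_{0,\nu}({\mathbb R}^{n}:Y)}$ obtained by inserting $\nu({\bf t})\leq \nu({\bf t}-\sigma)w(\sigma)$, is exactly the computation the paper carries out in the proof of Theorem \ref{nova}, applied there to $R(\cdot)$ in place of $h$. One caveat: the printed statement of Proposition \ref{convdiaggases}, unlike Proposition \ref{convdiaggas} and Theorem \ref{novaes}, omits the hypothesis that there exists $w:{\mathbb R}^{n}\rightarrow (0,\infty)$ with $hw\in L^{1}({\mathbb R}^{n})$ and \eqref{svajni}; you are right to import it explicitly, since without some such submultiplicativity of $\nu$ the map $g\mapsto h\ast g$ need not preserve $C_{0,\nu}({\mathbb R}^{n}:Y)$ (consider $\nu({\bf t})=e^{|{\bf t}|^{2}}$), so the omission should be read as a misprint rather than as a genuine weakening of the hypotheses.
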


\begin{thm}\label{novaes}
Suppose that $(R({\bf t}))_{{\bf t}> {\bf 0}}\subseteq L(X,Y)$ is a strongly continuous operator family, $c\in {\mathbb C}\setminus \{0\}$ and $\emptyset \neq I'\subseteq {\mathbb R}^{n}.$
Let $P:=C_{0,\nu}({\mathbb R}^{n} :Y)$ and $d(f,g):=\| f-g\|_{C_{0,\nu}({\mathbb R}^{n} :Y)}$ for all $f,\ g\in P,$ and let there exist a function $w : {\mathbb R}^{n} \rightarrow (0,\infty)$ such that \eqref{svajni} holds for all $x\in {\mathbb R}^{n},$ $y\in [0,\infty)^{n}$ and 
$\int_{(0,\infty)^{n}}(1+w({\bf t}))\|R({\bf t} )\|\, d{\bf t}<\infty .$ If $f : {\mathbb R}^{n} \rightarrow X$ is a bounded, continuous and Bohr $(I',c,{\mathcal P})$-almost periodic function, then the function $F: {\mathbb R}^{n} \rightarrow Y,$ given by \eqref{wer}, is
bounded, continuous and Bohr $(I',c,{\mathcal P})$-almost periodic.
\end{thm}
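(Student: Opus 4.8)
The plan is to mimic the proof of Theorem~\ref{nova}, with the argument built on convergent subsequences of translates replaced by the one built on $\epsilon$-almost periods for the relation $\rho=c{\mathrm I}$ (since $f$ depends on ${\bf t}$ alone, no collection ${\mathcal B}$ enters the notion of Bohr $(I',c,{\mathcal P})$-almost periodicity being used). I would first rewrite \eqref{wer} in the convolution form
$$
F({\bf t})=\int_{[0,\infty)^{n}}R({\bf s})f({\bf t}-{\bf s})\,d{\bf s},\qquad {\bf t}\in{\mathbb R}^{n},
$$
via the substitution $s_{i}\mapsto t_{i}-s_{i}$ in each coordinate; the interchange is legitimate by Tonelli with the integrable majorant $\|R({\bf s})\|\,\|f\|_{\infty}$, because the hypothesis $\int_{(0,\infty)^{n}}(1+w({\bf t}))\|R({\bf t})\|\,d{\bf t}<\infty$ together with $w>0$ gives $\int_{(0,\infty)^{n}}\|R({\bf t})\|\,d{\bf t}<\infty$. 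Then $F(\cdot)$ is bounded, and it is continuous by the dominated convergence theorem, exactly as in Theorem~\ref{nova}.

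Next, fix $\epsilon>0$. Since $f(\cdot)$ is Bohr $(I',c,{\mathcal P})$-almost periodic, there is $l>0$ such that for every ${\bf t}_{0}\in I'$ there exists $\tau\in B({\bf t}_{0},l)\cap I'$ with $f(\cdot+\tau)-cf(\cdot)\in P$ and $\|f(\cdot+\tau)-cf(\cdot)\|_{P}\le\epsilon$. For such a $\tau$ I would estimate, using \eqref{svajni} in the form $\nu({\bf t})=\nu(({\bf t}-{\bf s})+{\bf s})\le\nu({\bf t}-{\bf s})\,w({\bf s})$, valid since ${\bf s}\in[0,\infty)^{n}$ on the domain of integration:
\begin{align*}
\bigl\|F({\bf t}+\tau)-cF({\bf t})\bigr\|_{Y}\,\nu({\bf t})
&\le\int_{[0,\infty)^{n}}\|R({\bf s})\|\,\bigl\|f({\bf t}+\tau-{\bf s})-cf({\bf t}-{\bf s})\bigr\|\,\nu({\bf t}-{\bf s})\,w({\bf s})\,d{\bf s}\\
&\le\bigl\|f(\cdot+\tau)-cf(\cdot)\bigr\|_{P}\int_{[0,\infty)^{n}}\|R({\bf s})\|\,w({\bf s})\,d{\bf s},
\end{align*}
so that $\sup_{{\bf t}\in{\mathbb R}^{n}}\|F({\bf t}+\tau)-cF({\bf t})\|_{Y}\,\nu({\bf t})\le C\epsilon$, where $C:=\int_{[0,\infty)^{n}}\|R({\bf s})\|\,w({\bf s})\,d{\bf s}<\infty$ is independent of $\tau$, ${\bf t}_{0}$ and $\epsilon$. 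Running the argument with $\epsilon/C$ in place of $\epsilon$ then yields the required almost-period estimate for $F(\cdot)$.

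The only step demanding genuine care — and where the crude sup-estimate above is not sufficient — is verifying that $F(\cdot+\tau)-cF(\cdot)$ actually lies in $P=C_{0,\nu}({\mathbb R}^{n}:Y)$, i.e. that $\|F({\bf t}+\tau)-cF({\bf t})\|_{Y}\,\nu({\bf t})\to0$ as $|{\bf t}|\to\infty$. I would deduce this from the dominated convergence theorem: writing $g:=f(\cdot+\tau)-cf(\cdot)\in C_{0,\nu}$, the integrand $\|R({\bf s})\|\,\|g({\bf t}-{\bf s})\|\,\nu({\bf t}-{\bf s})\,w({\bf s})$ tends to $0$ pointwise in ${\bf s}$ as $|{\bf t}|\to\infty$ (because $\|g({\bf u})\|\,\nu({\bf u})\to0$ when $|{\bf u}|\to\infty$) and is dominated by the integrable function $\|g\|_{P}\,\|R({\bf s})\|\,w({\bf s})$; continuity of $F(\cdot+\tau)-cF(\cdot)$ is inherited from that of $F(\cdot)$. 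Thus the proof is a routine adaptation of Theorem~\ref{nova}, the subtleties being only the change of variables in the unbounded iterated integral and the $C_{0,\nu}$-membership just described.
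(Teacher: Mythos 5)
Your proposal is correct and follows exactly the route the paper intends: Theorem \ref{novaes} is stated as a reformulation of Theorem \ref{nova}, and your argument is the natural adaptation of that proof, replacing convergent subsequences of translates by $\epsilon$-almost periods for $\rho=c{\mathrm I}$ and reusing the same \eqref{svajni}-based estimate against $\int_{[0,\infty)^{n}}\|R({\bf s})\|w({\bf s})\,d{\bf s}$. Your observation that the lower bound $\nu\geq c>0$ is no longer needed (since there is no limit function $f^{\ast}$ whose boundedness must be recovered) matches the paper's explicit remark to that effect, and the dominated-convergence verification of $C_{0,\nu}$-membership is sound.
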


The proof of following proposition is also simple and therefore omitted (we assume the general requirements on the regions $I$ and $I'$, the binary relation $\rho$ and the collections ${\mathrm R},$ ${\mathrm R}_{X}$ from the introduced definitions):

\begin{prop}\label{aerosmith}
Suppose that $ {\mathcal P}_{Z}=(P_{Z},d_{Z})$ is a metric space, $\phi : {\mathcal P} \rightarrow {\mathcal P}_{Z}$ is uniformly continuous, the function $\phi(F_{1}(\cdot))-\phi(F_{2}(\cdot))$ belongs to $P_{Z}$ whenever $F_{1},\ F_{2}\in Y^{I}$ and $F_{1}-F_{2}\in P,$ and $\rho_{Z}$ is any binary relation on $Z$ such that
$\phi(\rho(F({\bf t};x)))\subseteq \rho_{Z}(\phi(F({\bf t};x)))$ for all ${\bf t}\in I$ and $x\in X.$
Suppose, further, that
$F : I \times X \rightarrow Y$ is (strongly) $({\mathrm R},{\mathcal B},{\mathcal P},L)$-multi-almost periodic [(strongly) $({\mathrm R}_{X},{\mathcal B},{\mathcal P},L)$-multi-almost periodic/Bohr $({\mathcal B},I',\rho,{\mathcal P})$-almost periodic/$({\mathcal B},I',\rho,{\mathcal P})$-uniformly recurrent].
Then $\phi \circ F : I \times X \rightarrow Z$ is (strongly) $({\mathrm R},{\mathcal B},{\mathcal P}_{Z},L)$-multi-almost periodic [(strongly) $({\mathrm R}_{X},{\mathcal B},{\mathcal P}_{Z},L)$-multi-almost periodic/Bohr $({\mathcal B},I',\rho_{Z},{\mathcal P})$-almost periodic/$({\mathcal B},I',\rho_{Z},{\mathcal P})$-uniformly recurrent].
\end{prop}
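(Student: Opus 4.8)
The plan is to prove all four assertions in one stroke by pushing the data that witnesses almost periodicity of $F$ forward through $\phi$. In the two multi-almost periodic cases I would keep the subsequence $({\bf b}_{k_{l}})$ that the hypothesis on $F$ produces and take the new limit function to be $\phi\circ F^{\ast}$, i.e.\ $(\mathbf t;x)\mapsto\phi(F^{\ast}(\mathbf t;x))$; in the Bohr almost periodicity and in the uniform recurrence cases I would keep the same $\epsilon$-period ${\bf \tau}$, resp.\ the same sequence $({\bf \tau}_{k})$, and replace each relational witness $y_{\cdot;x}\in\rho(F(\cdot;x))$ by $\phi(y_{\cdot;x})$. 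That $\phi(y_{\mathbf t;x})$ lies in $\rho_{Z}(\phi(F(\mathbf t;x)))$ is immediate from the compatibility hypothesis $\phi(\rho(F(\mathbf t;x)))\subseteq\rho_{Z}(\phi(F(\mathbf t;x)))$ together with $y_{\mathbf t;x}\in\rho(F(\mathbf t;x))$. So the data for $\phi\circ F$ is forced by the statement, and only the two usual clauses of each definition -- a ``membership in the pivot space'' clause and a ``smallness'' clause -- remain to be verified.

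For the membership clauses I would invoke the hypothesis that $\phi(F_{1}(\cdot))-\phi(F_{2}(\cdot))\in P_{Z}$ whenever $F_{1},F_{2}\in Y^{I}$ and $F_{1}-F_{2}\in P$, applied with $F_{1}=F(\cdot+{\bf b}_{k_{l}};x)$, $F_{2}=F^{\ast}(\cdot;x)$ (resp.\ $F_{1}=F(\cdot+{\bf \tau};x)$, $F_{2}=y_{\cdot;x}$): the relevant difference lies in $P$ by the defining property of $F$, whence the transported difference $\phi(F(\cdot+{\bf b}_{k_{l}};x))-\phi(F^{\ast}(\cdot;x))$, resp.\ $\phi(F(\cdot+{\bf \tau};x))-\phi(y_{\cdot;x})$, lies in $P_{Z}$. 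For the smallness clauses I would use the uniform continuity of $\phi$: for each $\epsilon>0$ there is $\delta>0$ such that $d$-closeness of two arguments forces their $\phi$-images to be $d_{Z}$-close, and this $\delta$ does not depend on $x$, so the supremum over $x$ can be carried through the estimate. Thus from
\begin{align*}
\lim_{l\rightarrow+\infty}\sup_{x\in B'}\bigl\|F\bigl(\cdot+{\bf b}_{k_{l}};x\bigr)-F^{\ast}(\cdot;x)\bigr\|_{P}=0
\end{align*}
one obtains $\lim_{l\rightarrow+\infty}\sup_{x\in B'}\|\phi(F(\cdot+{\bf b}_{k_{l}};x))-\phi(F^{\ast}(\cdot;x))\|_{P_{Z}}=0$, and likewise the bound $\sup_{x\in B}\|F(\cdot+{\bf \tau};x)-y_{\cdot;x}\|_{P}\leq\epsilon$ and the recurrence limit $\lim_{k\rightarrow+\infty}\sup_{x\in B}\|F(\cdot+{\bf \tau}_{k};x)-y_{\cdot;x}\|_{P}=0$ transfer to the $\phi$-images. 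Putting the membership and smallness clauses together yields precisely the four conclusions of the proposition.

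The $({\mathrm R}_{X},{\mathcal B},{\mathcal P},L)$-case is handled by the identical argument with $F(\cdot+{\bf b}_{k_{l}};x)$ replaced throughout by $F(\cdot+{\bf b}_{k_{l}};x+x_{k_{l}})$, and the strong versions (when $I={\mathbb R}^{n}$) add only the reverse-translation estimates, which follow verbatim after interchanging the roles of $F$ and $F^{\ast}$ and using the reverse inclusions $F^{\ast}(\cdot-{\bf b}_{k_{l}};x)-F(\cdot;x)\in P$, etc. The one point that requires care -- and the reason the membership condition on $\phi$ is imposed separately from uniform continuity -- is that in the smallness step the two functions fed to $\phi$, such as $F(\cdot+{\bf b}_{k_{l}};x)$ and $F^{\ast}(\cdot;x)$, need not individually lie in $P$; ``uniform continuity of $\phi$'' should therefore be understood, in conjunction with the membership hypothesis, as the assertion that $\|\phi(F_{1})-\phi(F_{2})\|_{P_{Z}}$ is controlled by a modulus of continuity of $\|F_{1}-F_{2}\|_{P}$ whenever $F_{1}-F_{2}\in P$. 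For functions of type $1$ this subtlety disappears, since then the translates and the limit function genuinely belong to $P$ and $\phi$ may be applied in the literal sense; once this reading is fixed, the remaining verification is routine bookkeeping.
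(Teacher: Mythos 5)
Your argument is correct and is precisely the direct push-forward verification the paper has in mind when it declares this proof ``simple and therefore omitted'': keep the witnessing data for $F$ (the subsequence and $F^{\ast}$, resp.\ the $\epsilon$-period $\tau$ or the sequence $(\tau_{k})$), transport it through $\phi$, use the membership hypothesis for the pivot-space clauses, uniform continuity (with a modulus independent of $x$, so the supremum over $B'$ passes through) for the smallness clauses, and the compatibility $\phi(\rho(F({\bf t};x)))\subseteq\rho_{Z}(\phi(F({\bf t};x)))$ for the new relational witnesses. Your closing remark correctly isolates the one genuine looseness in the statement --- $\phi$ acts pointwise $Y\to Z$ while ``uniform continuity ${\mathcal P}\to{\mathcal P}_{Z}$'' must be read as a uniform modulus for $\|\phi\circ F_{1}-\phi\circ F_{2}\|_{P_{Z}}$ in terms of $\|F_{1}-F_{2}\|_{P}$ on pairs with $F_{1}-F_{2}\in P$ --- and with that reading fixed nothing further is needed.
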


\subsection{Generalization of multi-dimensional (Stepanov) $\rho$-almost periodicity}\label{periodic123}

In this subsection, we will employ the approach obeyed in the former part of this paper to generalize the notion of Bohr $({\mathcal B},I',\rho)$-almost periodicity ($({\mathcal B},I',\rho)$-uniform recurrence)
and the notion of Stepanov $(\Omega,p({\bf u}))$-$({\mathcal B},I',\rho)$-almost periodicity (Stepanov $(\Omega,p({\bf u}),\rho)$-$({\mathcal B},\Lambda')$-uniform recurrence). We have the following:
\begin{itemize}
\item[(A1)]
Suppose that $\emptyset  \neq I' \subseteq {\mathbb R}^{n},$ $\emptyset  \neq I \subseteq {\mathbb R}^{n},$ $\rho$ is a binary relation on $Y,$ $I +I' \subseteq I,$ and
$F : I \times X \rightarrow Y$ is Bohr $({\mathcal B},I',\rho)$-almost periodic ($({\mathcal B},I',\rho)$-uniformly recurrent). Suppose, further, that the set $I$ is Lebesgue measurable as well as that for each $x\in X$ any selection of the multi-valued mapping ${\bf t}\mapsto \rho(F({\bf t};x)),$ ${\bf t}\in I$ is Lebesgue measurable. Let a function
$\nu : I \rightarrow (0,\infty)$ be Lebesgue measurable, as well. 
Suppose now that $\nu \in L^{p({\bf t})}(I:{\mathbb C})$. Then Lemma \ref{aux}(iii) yields that the Banach space $L^{\infty}(I: Y)$ is continuously embedded into $L^{p({\bf t})}_{\rho}(I: Y);$
applying Proposition \ref{als}(i), we easily get that the function $F(\cdot;\cdot)$ is Bohr $({\mathcal B},I',\rho,{\mathcal P})$-almost periodic ($({\mathcal B},I',\rho,{\mathcal P})$-uniformly recurrent),
where $P:=L^{p({\bf t})}_{\nu}(I: Y)$ and $d(f,g):=\|f-g\|_{L^{p({\bf t})}_{\rho}(I: Y)}$ for all $f,\ g\in P.$ We can also use the space $C_{0,\nu}(I : Y)$ here; see Remark \ref{obeserve}(ii).
\item[(A2)]
Suppose that $\emptyset  \neq I' \subseteq {\mathbb R}^{n},$ $\emptyset  \neq I \subseteq {\mathbb R}^{n},$ $\Omega
$ is a compact subset of ${\mathbb R}^{n}$ with positive Lebesgue measure, $I+\Omega \subseteq I$ and the function $F : I\times X \rightarrow Y$ is 
Stepanov $(\Omega,p({\bf u}))$-$({\mathcal B},I',\rho)$-almost periodic in the following sense (see \cite{ejmaa-2022} for more details):
For every $B\in {\mathcal B}$ and $\epsilon>0$
there exists $l>0$ such that for each ${\bf t}_{0} \in I'$ there exists ${\bf \tau} \in B({\bf t}_{0},l) \cap I'$ such that, for every ${\bf t}\in I$ and $x\in B,$  the mapping ${\bf u} \mapsto \rho (F({\bf t}+{\bf u};x)),$
${\bf u}\in \Omega$ is well defined, single valued and measurable, belongs to the space $ L^{p({\bf u})}(\Omega : Y)$ and
\begin{align}\label{prc}
\bigl\|F({\bf t}+{\bf \tau}+{\bf u};x)-\rho (F({\bf t}+{\bf u};x))\bigr\|_{L^{p({\bf u})}(\Omega : Y)} \leq \epsilon,\quad {\bf t}\in I,\ x\in B.
\end{align}
Suppose, further, that there exist a countable family $D$ and a collection $\{k_{d} \in I : d\in D\}$ such that
$I=\bigcup_{d\in D}(k_{d}+\Omega)$ and $m((k_{d_{1}}+\Omega) \cap (k_{d_{2}}+\Omega))=0$ for all $d_{1},\ d_{2}\in D$ such that $d_{1}\neq d_{2}.$
Define $p : I \rightarrow [1,\infty)$ by $p({\bf t}):=p({\bf t}-k_{d})$ if there exists a unique $d\in D$ such that ${\bf t}\in k_{d}+\Omega;$ otherwise, we set $p({\bf t}):=0.$ The last assumption implies that this mapping is well defined and measurable; furthermore, a simple argumentation shows that, for every $x\in X,$ we have:
\begin{align}
\notag \bigl\| [F({\bf t}+\tau;x)&-\rho (F({\bf t};x)) ] \cdot \rho({\bf t})\bigr\|_{L^{p({\bf t})}((k_{d}+\Omega) : Y)}
\\\label{argum} &=\bigl\| [F({\bf t}+k_{d}+\tau;x)-\rho (F({\bf t}+k_{d};x))] \cdot \rho({\bf t}+k_{d}) \bigr\|_{L^{p({\bf t})}(\Omega : Y)}.
\end{align}
We will assume that
\begin{align}\label{prc1}
S:=\sum_{d\in D}\mbox{ess sup} _{{\bf t}\in k_{d}+\Omega}\nu({\bf t})<+\infty.
\end{align}
Let $B\in {\mathcal B}$ and $\epsilon>0$ be fixed, and let $l>0$ be determined from the Stepanov $(\Omega,p({\bf u}))$-$({\mathcal B},I',\rho)$-almost periodicity of function $F(\cdot;\cdot)$; further on, let \eqref{prc} be satisfied.
Then, due to \eqref{argum} and a simple argumentation involving Lemma \ref{aux}(iii), we have:
\begin{align*}
&\bigl\| F({\bf t}+\tau;x)-\rho (F({\bf t};x)) \bigr\|_{L^{p({\bf t})}_{\nu}(I : Y)}
\\&\leq \sum_{d\in D}\bigl\| F({\bf t}+\tau;x)-\rho (F({\bf t};x)) \bigr\|_{L^{p({\bf t})}_{\nu}((k_{d}+\Omega) : Y)}
\\& =\sum_{d\in D}\bigl\| [F({\bf t}+\tau;x)-\rho (F({\bf t};x)) ] \cdot \nu({\bf t})\bigr\|_{L^{p({\bf t})}((k_{d}+\Omega) : Y)}
\\& =\sum_{d\in D}\bigl\| [F({\bf t}+k_{d}+\tau;x)-\rho (F({\bf t}+k_{d};x))] \cdot \nu({\bf t}+k_{d}) \bigr\|_{L^{p({\bf t})}(\Omega : Y)}
\\& \leq \sum_{d\in D}\bigl\| F({\bf t}+k_{d}+\tau;x)-\rho (F({\bf t}+k_{d};x)) \bigr\|_{L^{p({\bf t})}(\Omega : Y)}\cdot \mbox{ess sup} _{{\bf t}\in k_{d}+\Omega}\nu({\bf t})
\\& \leq \sum_{d\in D}\epsilon\cdot \mbox{ess sup} _{{\bf t}\in k_{d}+\Omega}\nu({\bf t})\leq \epsilon S.
\end{align*}
Since \eqref{prc1} is assumed, the above implies that the function $F(\cdot;\cdot)$ is Bohr $({\mathcal B},I',\rho,{\mathcal P})$-almost periodic, where $P:=L^{p({\bf t})}_{\nu}(I: Y)$ and $d(f,g):=\|f-g\|_{L^{p({\bf t})}_{\rho}(I: Y)}$ for all $f,\ g\in P.$

The above conclusion can be also formulated for Stepanov $(\Omega,p({\bf u}),\rho)$-$({\mathcal B},I')$-uniformly recurrent functions  (\cite{ejmaa-2022}).
\end{itemize}

The above particularly shows that the spaces of Stepanov $p$-almost periodic functions $F : {\mathbb R}^{n} \rightarrow Y$, where
$1\leq p<\infty,$
can be embedded into the corresponding spaces of Bohr ${\mathcal P}$-almost periodic functions. For example, 
in the case of consideration of one-dimensional Stepanov $p$-almost periodic functions, we have $\Omega=[0,1]$, $I'={\mathbb R}$ 
and $\rho={\rm I}.$ Then we can take $\nu(t):=1/(|t|^{\zeta}+1),$ $t\in {\mathbb R}$ since, in this case, \eqref{prc1} holds true. On the other hand, it is not clear how one can embed the space of all (equi-)Weyl-$p$-almost periodic functions $F : {\mathbb R}^{n} \rightarrow Y$ in some space of  Bohr ${\mathcal P}$-almost periodic functions (see \cite{nova-mono}-\cite{nova-selected} for the notion).

\section{Applications to the abstract Volterra integro-differential equations}\label{some1234554321}

The main aim of this section is to incorporate our results in the analysis of existence and uniqueness of the metrical almost periodic type solutions for some classes of abstract Volterra integro-differential equations. 
\vspace{0.1cm}

1. Let $Y$ be one of the spaces $L^{p}({\mathbb R}^{n}),$ $C_{0}({\mathbb R}^{n})$ or $BUC({\mathbb R}^{n}),$ where $1\leq p<\infty.$ Then we know that the Gaussian semigroup\index{Gaussian semigroup}
$$
(G(t)F)(x):=\bigl( 4\pi t \bigr)^{-(n/2)}\int_{{\mathbb R}^{n}}F(x-y)e^{-\frac{|y|^{2}}{4t}}\, dy,\quad t>0,\ f\in Y,\ x\in {\mathbb R}^{n},
$$
can be extended to a bounded analytic $C_{0}$-semigroup of angle $\pi/2,$ generated by the Laplacian $\Delta_{Y}$ acting with its maximal distributional domain in $Y;$ see e.g., \cite[Example 3.7.6]{a43}. Let $c\in {\mathbb C}\setminus \{0\}$ and $\emptyset \neq I'\subseteq {\mathbb R}^{n},$ and let
$F(\cdot)$ be bounded and $({\mathrm R},{\mathcal P})$-multi-almost periodic (Bohr $(I',c,{\mathcal P})$-almost periodic), where ${\mathcal P}$ is given in the formulation of Proposition \ref{convdiaggas} (Proposition \ref{convdiaggases}). Applying this result, we get that for each $t_{0}>0$ the function ${\mathbb R}^{n}\ni x\mapsto u(x,t_{0})\equiv (G(t_{0})F)(x) \in {\mathbb C}$
is likewise bounded and $({\mathrm R},{\mathcal P})$-multi-almost periodic (Bohr $(I',c,{\mathcal P})$-almost periodic).
We can  similarly analyze the Poisson semigroup here; see e.g.,
\cite[Example 3.7.9]{a43}.\vspace{0.1cm}

2. It is clear that we can use a combination of Theorem \ref{nova} (Theorem \ref{novaes}) and Theorem \ref{eovakoonakoap} in the analysis of metrical almost periodic solutions in time-variable for a large class of abstract fractional semi-linear inclusions. For instance, of concern is the following abstract semi-linear Cauchy inclusion:
\begin{align}\label{left}
D_{t,+}^{\gamma}u(t)\in -{\mathcal A}u(t)+f(t,u(t)),\ t\in {\mathbb R},
\end{align}
where $D_{t,+}^{\gamma}$ denotes the Weyl-Liouville fractional derivative of order $\gamma \in (0,1),$
$f : {\mathbb R} \times X \rightarrow X$ has certain properties and ${\mathcal A}$ is a closed multivalued linear operator on $X$ satisfying the following condition: 
\begin{itemize} \index{removable singularity at zero}
\item[(P)]
There exist finite constants $c,\ M>0$ and $\beta \in (0,1]$ such that\index{condition!(PW)}
$$
\Psi:=\Psi_{c}:=\Bigl\{ \lambda \in {\mathbb C} : \Re \lambda \geq -c\bigl( |\Im \lambda| +1 \bigr) \Bigr\} \subseteq \rho({\mathcal A})
$$
and
$$
\| R(\lambda : {\mathcal A})\| \leq M\bigl( 1+|\lambda|\bigr)^{-\beta},\quad \lambda \in \Psi ;
$$
\end{itemize}
see \cite{nova-mono} for the notion and more details. 
Define
\begin{align*}
T_{\nu}(t)x:=\frac{1}{2\pi i}\int_{\Gamma}(-\lambda)^{\nu}e^{\lambda t}\bigl(  \lambda -{\mathcal A} \bigr)^{-1}x\, d\lambda,\quad x\in X,\ t>0 \ (\nu>0),
\end{align*}
where $\Gamma$ is the upwards oriented curve $\lambda=-c(|\eta|+1)+i\eta$ ($\eta \in {\mathbb R}$), 
\begin{align*}
T_{\gamma,\nu}(t)x:=t^{\gamma \nu}\int^{\infty}_{0}s^{\nu}\Phi_{\gamma}( s)T_{0}\bigl( st^{\gamma}\bigr)x\, ds,\quad t>0,\ x\in X,\ \nu >-\beta,
\end{align*}
$$
S_{\gamma}(t):=T_{\gamma,0}(t)\mbox{ and }P_{\gamma}(t):=\gamma T_{\gamma,1}(t)/t^{\gamma},\quad t>0.
$$
Define also
\begin{align*}
R_{\gamma}(t):= t^{\gamma -1}P_{\gamma}(t),\quad t>0,\ x\in X.
\end{align*}
By a mild solution of \eqref{left}, we mean any $ X$-continuous function $u(\cdot)$ such that $u(t)= (\Lambda_{\gamma} u)(t),$ $t\in {\mathbb R},$ where
$$
t\mapsto (\Lambda_{\gamma} u)(t):=\int_{-\infty}^{t}R_{\gamma}(t-s)f(s,u(s))\, ds,\ t\in {\mathbb R}.
$$
Suppose now that ${\mathrm R}$ denotes the collection of all sequences in $[0,\infty),$ 
 $P:=C_{0,\nu}({\mathbb R} :X)$ and $d(f,g):=\| f-g\|_{C_{0,\nu}({\mathbb R} :X)}$ for all $f,\ g\in P.$ Let there exist a positive real number $c>0$ such that $\nu(t)\geq c$ for all $t\in {\mathbb R},$ and let there exist a function $w : {\mathbb R} \rightarrow (0,\infty)$ such that \eqref{svajni} holds for all $x\in {\mathbb R},$ $y\geq 0$ and 
$\int_{(0,\infty)}(1+w(t))\|R_{\gamma}(t)\|\, dt<\infty .$ It can be simply verified that the space ${\mathcal X}$ consisting of all bounded, continuous, $({\mathrm R},{\mathcal P})$-multi-almost periodic functions $f : {\mathbb R}^{n} \rightarrow X$ of type $1$ is a complete metric space equipped with the metric $d(\cdot,\cdot)=\|\cdot-\cdot\|_{P}$ (see also Proposition \ref{mackat}). Suppose, further, that
$ f : {\mathbb R} \times X \rightarrow X$ is continuous, $({\mathrm R},{\mathcal B},{\mathcal P})$-multi-almost periodic of type $1$ with $X\in {\mathcal B}$, and satisfies that $\sup_{t\in {\mathbb R};x\in B}\| f(t,x)\|<+\infty$ for each bounded subset $B$ of $X.$ If we assume that there exists a finite real constant $L>0$ such that  $\|f(t,x)-f(t,y)\| \leq L\|x-y\|$ for all $t\in {\mathbb R}$
 and $x,\ y\in X,$ as well as that 
$L\int_{(0,\infty)}w(t)\|R_{\gamma}(t)\|\, dt<1,$ then the mapping $\Lambda_{\gamma} : {\mathcal X} \rightarrow {\mathcal X}$ is well defined due to Theorem \ref{nova} and Theorem \ref{eovakoonakoap}; moreover, this mapping is a contraction and has a unique fixed point theorem according to the Banach contraction principle. Therefore, there exists a unique 
bounded, continuous solution of 
inclusion \emph{\eqref{left}} which is $({\mathrm R},{\mathcal P})$-multi-almost periodic of type $1.$ 

In particular, the above conclusions can be incorporated in the study of the following semi-linear fractional Poisson heat equation with Weyl-Liouville fractional derivatives in the space $X=L^{p}(\Omega):$
\[
D^{\gamma}_{t,+}[m(x)v(t,x)]=(\Delta -b )v(t,x) +f(t,v(t,x)),\quad t\in {\mathbb R},\ x\in {\Omega},
\]
where $1\leq p <\infty$, $\Omega$ is an open domain in ${\mathbb R}^{n}$ with smooth boundary, $m\in L^{\infty}(\Omega),$ $m(x)\geq 0$ for a.e. $x\in \Omega,$ $\gamma \in (0,1),$ $\Delta$ is the Dirichlet Laplacian and $b>0;$ 
possible applications can be also given to the higher-order differential operators in H\"older spaces (\cite{nova-mono}).
\vspace{0.1cm}

3.
The choice of space ${\mathcal P}$ used in the first application of this section enables one to reformulate the conclusions from \cite[Example 1.1]{marko-manuel-ap} in our new context,
provided that $\int_{{\mathbb R}^{n}}|E(t_{0},y)|w(y)\, dy <\infty.$ This can be incorporated in the analysis of metrical almost periodic solutions of the inhomogeneous heat equation in ${\mathbb R}^{n}.$ 

Similarly, with the same choice of space ${\mathcal P},$ we can analyze the existence and uniqueness of Bohr $(I',c,{\mathcal P})$-almost periodic ($(I',c,{\mathcal P})$-uniformly recurrent) solutions of the wave equation; from the classical theory of partial differential equations, we know that the solution of wave equation in ${\mathbb R},$ ${\mathbb R}^{2}$ and ${\mathbb R}^{3}$ is given by the famous d'Alembert formula, the Poisson formula and the Kirchhoff formula, respectively (see \cite[pp. 540--542]{nova-selected} for more details).
We close this section with the observation that, in the multi-dimensional setting, we can also consider the Hammerstein semi-linear integral equation of convolution type on ${\mathbb R}^{n};$ see the fourth application in \cite[Section 3]{marko-manuel-ap}.

\section{Conclusions and final remarks}\label{micho}

In this paper, we have investigated
multi-dimensional 
almost periodic type functions in general metric. We have provided a unification concept for the spaces of multi-dimensional (Stepanov) almost periodic functions and multi-dimensional (Stepanov) almost automorphic functions.
Some applications of our results to
the abstract Volterra integro-differential equations and the partial differential equations are also given.

It is our strong belief that this research article is just a beginning of several serious investigations of almost periodicity in special metrics. It is also worth noting that this is probably the first research article concerning metrical almost periodicity in the multi-dimensional setting. 

Concerning some subjects not considered in this paper, we would like to mention that we have recently analyzed,
in \cite{marko-manuel-ap}-\cite{marko-manuel-aa} and \cite{rho}, various notions of ${\mathbb D}$-asymptotical periodicity and ${\mathbb D}$-asymptotical automorphy in the multi-dimensional setting.  We will not analyze these topics here.

We can also consider the following notion:

\begin{defn}\label{strong-app} 
Suppose that $\emptyset  \neq I \subseteq {\mathbb R}^{n},$ $F : I \times X \rightarrow Y$ is a continuous function, and $P$ contains all trigonometric polynomials.
Then we say that $F(\cdot;\cdot)$ is strongly $({\mathcal B},{\mathcal P})$-almost periodic if and only if $F(\cdot;x) \in P$ for all $x\in X$ and for each $B\in {\mathcal B}$ there exists a sequence $(P_{k}^{B}({\bf t};x))$ of trigonometric polynomials 
such that 
$$
\lim_{k\rightarrow +\infty}\sup_{x\in B}\bigl\|P_{k}^{B}(\cdot;x)-F(\cdot;x)\bigr\|_{P}=0.
$$ 
Here, by a trigonometric polynomial $P : I\times X \rightarrow Y$ we mean any linear combination of functions like
\begin{align*}
e^{i[\lambda_{1}t_{1}+\lambda_{2}t_{2}+\cdot \cdot \cdot +\lambda_{n}t_{n}]}c(x),
\end{align*}
where $\lambda_{i}$ are real numbers ($1\leq i \leq n$) and $c: X \rightarrow Y$ is a continuous mapping.
\end{defn}\index{function!strongly $({\mathcal B},{\mathcal P})$-almost periodic}

Then the notion of a strongly ${\mathcal B}$-almost periodic function is obtained by  plugging that $P=l_{\infty}(I:Y).$ We will not further analyze here the structural properties of strongly $({\mathcal B},{\mathcal P})$-almost periodic functions. 

The following notion is also meaningful:

\begin{defn}\label{drasko-presings}
Let ${\bf \omega}\in {\mathbb R}^{n} \setminus \{0\},$ $\rho$ be a binary relation on $Y$ 
and 
${\bf \omega}+I \subseteq I$. A
function $F:I\rightarrow Y$ is said to be semi-$({\bf \omega},\rho,{\mathcal P})$-periodic if and only if there exists a sequence $(F_{k})$ of $({\bf \omega},\rho)$-periodic functions such that
\begin{align}\label{raj}
\lim_{k\rightarrow \infty}\bigl\| F_{k}-F\bigr\|_{P}=0.
\end{align}
\end{defn}

\begin{defn}\label{drasko-presing1s}
Let ${\bf \omega}_{j}\in {\mathbb R} \setminus \{0\},$ $\rho_{j}\in {\mathbb C} \setminus \{0\}$ is a binary relation on $Y$
and 
${\bf \omega}_{j}e_{j}+I \subseteq I$ ($1\leq j\leq n$). A 
function $F:I\rightarrow Y$ is said to be semi-$({\bf \omega}_{j},\rho_{j},{\mathcal P})_{j\in {\mathbb N}_{n}}$-periodic if and only if there exists a sequence $(F_{k})$ of $({\bf \omega}_{j},\rho_{j})_{j\in {\mathbb N}_{n}}$-periodic functions such that \eqref{raj} holds.
\end{defn} \index{function!$({\bf \omega}_{j},\rho_{j},{\mathcal P})_{j\in {\mathbb N}_{n}}$-periodic}

More details and structural results on semi-$({\bf \omega},\rho,{\mathcal P})$-periodic functions and semi-$({\bf \omega}_{j},\rho_{j},{\mathcal P})_{j\in {\mathbb N}_{n}}$-periodic functions will appear somewhere else. Multi-dimensional analogues of almost periodic functions in the Hausdorff metric, multi-dimensional analogues of almost periodic functions in variation, and multi-dimensional analogues of $L_{\alpha}$-almost periodic functions will be considered somewhere else, as well.

\end{document}